\documentclass{article}

\usepackage[usenames,dvipsnames]{xcolor}
\usepackage{amsfonts}
\usepackage{amsmath,amsthm,amssymb,dsfont}
\usepackage{enumerate}
\usepackage[english]{babel}
\usepackage{graphicx}	
\usepackage[caption=false]{subfig}
\usepackage[margin=3cm]{geometry}
\usepackage{url}
\usepackage{todonotes}
\usepackage{bbm}
\usepackage{booktabs}

\usepackage{tikz}
\definecolor{myyellow}{RGB}{242,226,149}
\definecolor{grey}{RGB}{150,150,150}
\definecolor{myblue}{RGB}{200,220,230}

\usepackage{pgflibraryarrows}		

\usepackage{pgfplots}
\pgfplotsset{compat=newest}

\usepackage{mathrsfs}

\usepackage{hyperref}
\hypersetup{colorlinks=true,citecolor=blue,linkcolor=blue,filecolor=blue,urlcolor=blue,breaklinks=true}

\usepackage{nicefrac}
\usepackage{mathtools}


\theoremstyle{plain}
\newtheorem{theorem}{Theorem}[section]
\newtheorem{lemma}[theorem]{Lemma}

\newtheorem{corollary}[theorem]{Corollary}
\newtheorem{proposition}[theorem]{Proposition}

\theoremstyle{definition}
\newtheorem{definition}[theorem]{Definition}
\newtheorem{remark}[theorem]{Remark}
\newtheorem{example}[theorem]{Example}

\newcommand*{\cA}{\mathcal{A}}

\newcommand*{\cD}{\mathcal{D}}

\newcommand*{\RR}{\mathbb{R}}

\newcommand*{\CC}{\mathbb{C}}

\newcommand*{\eps}{\varepsilon}
\newcommand*{\diag}{\mathrm{diag}}

\DeclareMathOperator{\tr}{tr}


\renewcommand*{\>}{\rangle}

\newcommand{\psd}{\succeq}

\DeclareMathOperator*{\argmin}{argmin}

\definecolor{mycolor1}{rgb}{0.00000,0.44700,0.74100}%
\definecolor{mycolor2}{rgb}{0.85000,0.32500,0.09800}%
\definecolor{mycolor3}{rgb}{0.92900,0.69400,0.12500}%
\definecolor{mycolor4}{rgb}{0.49400,0.18400,0.55600}%

\renewcommand{\H}{\mathbf{H}}
\renewcommand{\S}{\textbf{H}}

\newcommand{\Ybar}{Y_s}
\newcommand{\Vbar}{V_s}
\newcommand{\Bbar}{B_s}

\newcommand{\nsd}{\preceq}
\newcommand{\pd}{\succ}

\DeclareMathOperator{\dom}{{\bf dom}}
\DeclareMathOperator{\epi}{{\bf epi}}
\DeclareMathOperator{\cl}{{\bf cl}}
\DeclareMathOperator{\hypo}{{\bf hypo}}
\DeclareMathOperator{\interior}{{\bf int}}

\newcommand{\ones}{\boldsymbol{1}}

\newcommand{\e}{\psi}
\newcommand{\p}{\Psi}

\title{Optimal self-concordant barriers for quantum relative entropies}

\author{Hamza Fawzi\thanks{Department of Applied Mathematics and Theoretical Physics, University of Cambridge, Cambridge CB3 0WA, United Kingdom. \texttt{h.fawzi@damtp.cam.ac.uk}} \and James Saunderson\thanks{Department of Electrical and Computer Systems Engineering, Monash University, Victoria 3800, Australia. \texttt{james.saunderson@monash.edu}}}
\date{\today}

\begin{document}

\maketitle

\begin{abstract}

	Quantum relative entropies are jointly convex functions of two positive
	definite matrices that generalize the Kullback-Leibler divergence and
	arise naturally in quantum information theory.  In this paper, we prove
	self-concordance of natural barrier functions for the epigraphs of
	various quantum relative entropies and divergences. Furthermore we show
	that these barriers have optimal barrier parameter. These barriers
	allow convex optimization problems involving quantum relative entropies
	to be directly solved using interior point methods for non-symmetric
	cones, avoiding the approximations and lifting techniques used in
	previous approaches.  More generally, we establish the self-concordance
	of natural barriers for various closed convex cones related to the
	noncommutative perspectives of operator concave functions, and show
	that the resulting barrier parameters are optimal.	
\end{abstract}


\section{Introduction}
\label{sec:intro}

Given a pair of Hermitian positive definite matrices $X,Y$, the (Umegaki) quantum relative entropy $D(X|Y)$ is defined as
\begin{equation}
\label{eq:qre}
D(X|Y) = \tr(X \log X - X \log Y),
\end{equation}
where $\log$ denotes the matrix logarithm. The quantum relative entropy is a divergence measure between positive definite matrices that plays an important role in quantum information theory, as well as in other areas such as learning theory \cite{kulis2009low,chandrasekaran2017relative,bach2022information}. A fundamental property about $D(X|Y)$ is that it is jointly convex in $(X,Y)$, a property first established by Lieb and Ruskai \cite{liebruskai} building on an earlier result of Lieb \cite{lieb}.
 The quantum relative entropy belongs to a wider family of divergences between positive definite matrices. For example, Lieb's concavity theorem \cite{lieb} establishes the joint concavity of the functions
\begin{equation}
\label{eq:petzdivergence}
Q_{\alpha}(X|Y) = \tr(X^{\alpha} Y^{1-\alpha}) \qquad (\alpha \in [0,1])
\end{equation}
which are used to define the R\'enyi divergences $D_{\alpha}(X|Y) = \frac{1}{\alpha-1} \log Q_{\alpha}(X|Y)$, which converge to $D(X|Y)$ when $\alpha\to1$.

\paragraph{Noncommutative perspectives} The divergences above are strongly related to the notion of \emph{perspective} of a function. Given a function $g:(0,\infty)\to \RR$, its perspective is $P_g(x,y) = xg(y/x)$ defined for $x,y > 0$. It is well-known that if $g$ is concave, then $P_g$ is jointly concave in $(x,y)$. Note that when $g(x) = \log x$, we get $P_g(x,y) = -x\log(x/y)$ is the negative of the (scalar) relative entropy, and when $g(x) = x^{\beta}$, then $P_g(x,y) = x^{1-\beta} y^{\beta}$ is a geometric mean. In this paper, we are concerned with functions that arise from the generalization of the perspective to matrix arguments. The \emph{noncommutative perspective} of $g$ is defined by
\begin{equation}
\label{eq:perspective}
P_g(X,Y) = X^{1/2} g\left(X^{-1/2} Y X^{-1/2}\right) X^{1/2}
\end{equation}
for any $X,Y$ positive definite matrices. We recall that if $X$ is a Hermitian matrix with spectral decomposition $X = \sum_i \lambda_i v_i v_i^*$, then $g(X) = \sum_i g(\lambda_i) v_i v_i^*$. The function $g$ is operator concave if it satisfies Jensen's inequality in the positive semidefinite order $\psd$ (see~\eqref{eq:op-concave-def} for a precise definition).  When $g$ is operator concave, it can be shown that its perspective $P_g$ is jointly concave in $(X,Y)$, a fact that was established by Effros and generalized by Ebadian et al. \cite{effros2009matrix,ebadian2011perspectives}. The perspective of the logarithm function $g(x) = \log x$ (which is operator concave) corresponds to
\begin{equation}
\label{eq:Plog}
P_{\log}(X,Y) = -X^{1/2} \log(X^{1/2} Y^{-1} X^{1/2}) X^{1/2} \ ,
\end{equation}
which can be interpreted as the negative of an operator-valued relative entropy \cite{fujii1989relative}. Even though \eqref{eq:Plog} is distinct from \eqref{eq:qre} it can be shown \cite{effros2009matrix} that
\begin{equation}
\label{eq:qretensor}
D(X|Y) = -\p(P_{\log}(X\otimes I , I\otimes \bar Y))
\end{equation}
where $\p$ is an appropriate positive linear map. This shows that joint convexity of $D$ follows from joint concavity of $P_{\log}$.

\paragraph{Self-concordant barriers} The theory of self-concordant functions developed by Nesterov and Nemirovski \cite{nesterovnemirovski} plays a prominent role in convex optimization, and more particularly for interior-point methods. Consider a generic conic optimization problem
\begin{equation}
\label{eq:cp}
\min_x \quad \<c,x\> \quad \text{s.t.} \quad Ax=b, \; x \in K
\end{equation}
where $K \subset \RR^n$ is a closed convex cone and $A$ is a linear map. A \emph{self-concordant barrier} for $K$ is a convex function $F$ defined on the interior of $K$, such that $F(x) \to +\infty$ as $x$ approaches the boundary of $K$, the third derivative is bounded in terms of the second derivative, and the gradient is bounded in the norm defined by the (inverse) Hessian. Equipped with such a function, Nesterov and Nemirovski \cite{nesterovnemirovski} show that 
one can obtain an $\epsilon$-approximation to the optimal value of \eqref{eq:cp} in $O(\sqrt{\nu}\,\log(1/\epsilon))$ iterations of a path-following scheme, where $\nu$ is a parameter of the barrier function $F$. This scheme approximately follows the path $x^*(t) = \argmin \{ t \<c,x\> + F(x) : Ax=b \} \to x^*$ as $t\to \infty$ by applying Newton's method. Self-concordant barriers are known for various common convex cones such as the nonnegative orthant $\RR^n_+$, the second-order cone, the positive semidefinite cone $\S^n_+$, the exponential cone, and others, 
see e.g., \cite[Section 5.4.6]{nesterov2018lectures}. 
Implementations of interior-point algorithms have focused however on a small number of basic cones (essentially the nonnegative orthant, Cartesian products of second-order cones, and the positive semidefinite cone) because of their many symmetries that can be exploited algorithmically \cite{nesterovtodd}, and because most convex constraints that arise in practice can be formulated using these basic cones.
There have been however significant recent advances in developing practical interior-point solvers able to deal with generic convex sets $K$ via calls to a self-concordant barrier of $K$ \cite{karimi2020primal,coey2022performance,papp2022alfonso}.

\paragraph{Quantum relative entropy optimization} Optimization problems involving quantum entropies have attracted interest recently, and several approaches have been developed to deal with these problems. 
 The paper \cite{logapprox} showed how one can approximate, to high accuracy, optimization problems involving the quantum relative entropy using semidefinite programming. These approximations allow quantum relative entropy optimization problems to be solved using state-of-the-art algorithms for semidefinite programming. However, because these formulations usually require some form of ``lifting'', the resulting semidefinite program can be much larger than the original problem size; in fact, the semidefinite formulation of the quantum relative entropy function from \cite{logapprox} involves linear matrix inequalities of size $n^2 \times n^2$, resulting from the tensor product formulation in \eqref{eq:qretensor}.\footnote{We note however, that the semidefinite formulation of \eqref{eq:Plog} involves linear matrix inequalities of size $2n\times 2n$ only.} 

First-order methods have also been proposed to deal with specific optimization problems involving quantum entropies, such as the computation of quantum capacities, or various notions of quantum entanglement, see e.g., \cite{zinchenko2010numerical,sutter2015efficient,winick2018reliable,quantumblahutarimoto,you2021minimizing}. Recently, a second-order Gauss-Newton method was proposed for the problem of computing the rate of a quantum key distribution protocol (a particular instance of a quantum relative entropy optimization problem) \cite{hu2021robust}, however the method does not come with a quantitative convergence guarantee.

In \cite{faybusovich2020self}, the authors obtained self-concordant barriers for sets of the form $\{(X,Z) : g(X) \psd Z\}$ for any operator monotone function $g$.
 This implies, in particular, a self-concordant barrier for the epigraph of the relative entropy function $D(X|Y)$, when one of the arguments is fixed. Our results in this paper extend those of~\cite{faybusovich2020self} to all operator concave functions and, more importantly, to their noncommutative perspectives. This ultimately allows us to deal with various relative entropies and divergences.

\paragraph{Contributions} In this paper we give self-concordant barriers for convex sets defined in terms of the quantum relative entropy function, and related functions. Our results  show that one can solve quantum relative entropy optimization, without having to incur the lifting cost associated to semidefinite approximations.
In fact our results open the door to convergence guarantees for interior-point methods for quantum relative entropy optimization.  We note that some of the barriers we consider here have previously been conjectured to be self-concordant in \cite{karimi2019domain}.

Our first main theorem gives a self-concordant barrier for the matrix hypograph of 
the noncommutative perspective (defined in~\eqref{eq:perspective}) of any operator concave function $g:(0,\infty)\to \RR$. 
Here, and throughout, we denote by $\S_{++}^n$ the cone of $n\times n$ Hermitian positive definite matrices, 
and say that a linear map $\phi:\S^n\rightarrow \S^m$ is \emph{positive} if $\phi(X) \psd 0$ whenever $X \psd 0$. 
\begin{theorem}
\label{thm:main1}
Let $g:(0,\infty)\to \RR$ be an operator concave function and let $P_g$ be its noncommutative perspective. Let $\phi:\S^n\to\S^m$ be a positive linear map.
Then the function
\begin{equation}
\label{eq:scPg}
	(X,Y,Z) \mapsto -\log \det(\phi(P_g(X,Y)) - Z) - \log\det X - \log \det Y
\end{equation}
	defined on $\S^{n}_{++} \times \S^n_{++} \times \S^m$ is a $(2n+m)$-logarithmically homogeneous 
	self-concordant barrier for the closed convex cone
\begin{equation}
\label{eq:clhypo}
	\cl \{(X,Y,Z) \in \S^n_{++} \times \S^n_{++} \times \S^m : \phi(P_g(X,Y)) \psd Z\}.
\end{equation}
Moreover, this barrier is optimal in the sense that any self-concordant barrier for this cone has parameter at least $2n+m$.
\end{theorem}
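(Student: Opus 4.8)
The plan is to prove self-concordance of~\eqref{eq:scPg}, read off the parameter $2n+m$ from logarithmic homogeneity, and establish the lower bound separately.

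\emph{Reductions.} Logarithmic homogeneity of degree $2n+m$ is immediate: $P_g$ is positively homogeneous of degree $1$ in $(X,Y)$ and $\phi$ is linear, so under $(X,Y,Z)\mapsto(tX,tY,tZ)$ the argument of the first determinant scales by $t$ (contributing $-m\log t$) while $-\log\det X$ and $-\log\det Y$ each contribute $-n\log t$. By the standard theory \cite{nesterovnemirovski,nesterovtodd}, a $\theta$-logarithmically homogeneous self-concordant function on the interior of a cone is automatically a $\theta$-self-concordant barrier, so once self-concordance is proven the parameter $2n+m$ is free. Writing $\Theta(X,Y,Z):=\phi(P_g(X,Y))-Z$, the barrier splits as $-\log\det\Theta-\log\det X-\log\det Y$; since sums of self-concordant functions are self-concordant with parameters adding, it remains to show that $-\log\det\Theta$, \emph{together with the standard barriers for $X$ and $Y$}, is self-concordant on $\{X\succ0,\,Y\succ0,\,\Theta\succ0\}$ --- one should not expect $-\log\det\Theta$ alone to be self-concordant, the $X$- and $Y$-barriers being needed to absorb the nonlinearity of $P_g$.

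\emph{Self-concordance.} I would run this through the matrix analogue of Nesterov's composition rule: if a matrix-valued map $G$ on a convex domain is concave in the order $\succeq$ and is \emph{compatible} with a self-concordant barrier $\Phi_0$ of the domain --- the third derivative of $G$ being controlled, as a matrix inequality, by $\sqrt{D^2\Phi_0[h,h]}\,(-D^2G[h,h])$ --- then $-\log\det(G)+c\,\Phi_0$ is self-concordant for a suitable constant $c$. Apply this with $G(X,Y,Z)=\phi(P_g(X,Y))-Z$, which is $\succeq$-concave because $P_g$ is jointly operator concave (Effros, Ebadian et al., as recalled above), $\phi$ is positive linear, and $-Z$ is linear. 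The problem then reduces to verifying compatibility of $(X,Y)\mapsto\phi(P_g(X,Y))$ with $\Phi_0=-\log\det X-\log\det Y$, \emph{with a constant small enough that weight $1$ on each of $-\log\det X,\,-\log\det Y$ suffices} --- this is precisely what forces the final parameter to be $2n+m$ and not larger.

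\emph{The building-block estimate --- the main obstacle.} To check compatibility I would pass to the integral representation of operator concave functions: up to an affine term, $g$ is a positive combination (integral) of basic operator concave functions. Since the noncommutative perspective is linear in $g$ and commutes with integration, with $P_1(X,Y)=X$ and $P_{\mathrm{id}}(X,Y)=Y$, this transfers to an expression of $\phi(P_g(X,Y))$ as a positive combination of $\phi$ applied to an affine-in-$(X,Y)$ term and to rational perspectives of ``parallel-sum'' (and matrix-inverse) type, such as $P_{g_s}(X,Y)=X(Y+sX)^{-1}Y=X-sX(Y+sX)^{-1}X$. Both $\succeq$-concavity and the compatibility estimate survive positive linear maps and positive combinations/integrals with a uniform constant, so it suffices to check compatibility of each building block, uniformly in the parameter $s$, with $-\log\det X-\log\det Y$. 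This is a finite, explicit computation of the first three derivatives of an explicit rational map, which I would organise through the Schur-complement description
\begin{equation*}
W\preceq X(Y+sX)^{-1}Y\iff\begin{pmatrix}X-W&\sqrt{s}\,X\\\sqrt{s}\,X&Y+sX\end{pmatrix}\succeq0
\end{equation*}
(so that $P_{g_s}$ is affine on a lifted spectrahedral cone) together with the uniform bounds $(Y+sX)^{-1}\preceq Y^{-1}$ and $(Y+sX)^{-1}\preceq s^{-1}X^{-1}$. Pinning down the \emph{constant} here --- so that weight $1$ on each of the two determinant terms is exactly what is needed and the self-concordance constant comes out to $2$ --- is the crux of the whole argument.

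\emph{Lower bound.} For optimality I would use that restricting a $\nu$-self-concordant barrier to an affine subspace meeting the interior, and partial minimization over a linear subspace of variables, each yield a $\nu$-self-concordant barrier of the resulting cone, together with the optimality of $-\log\det$ for the positive semidefinite cone (parameter $n$ for $\S^n_{++}$, additive over Cartesian products) \cite{nesterovnemirovski}. Projecting out $Z$ already forces $\nu\ge2n$; the only delicate point is to recover the extra $m$ coming from the $m\times m$ matrix inequality $\phi(P_g(X,Y))\succeq Z$ independently of the $X$- and $Y$-constraints --- a hypograph-type lower-bound argument reducing, after an invertible change of variables on a suitable slice, to a product of semidefinite cones of total parameter $2n+m$. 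Overall, the self-concordance half is by far the harder part, while the lower bound and the parameter bookkeeping are comparatively routine modulo this one subtlety.
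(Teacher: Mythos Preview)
Your overall strategy---reduce to compatibility of $\phi\circ P_g$ with $-\log\det X-\log\det Y$ via an integral representation of $g$, then invoke Nesterov's composition theorem---is exactly the paper's route (Theorem~\ref{thm:nesterov}, Proposition~\ref{prop:conic}, Lemma~\ref{lem:compat1}).

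The substantive difference is the choice of building blocks. You propose parallel-sum type functions $P_{g_s}(X,Y)=X(Y+sX)^{-1}Y$, but the underlying scalar family $t\mapsto t/(t+s)$ is the integrand in L\"owner's representation of operator \emph{monotone} functions; it does not span all operator concave $g$ (e.g.\ $g(x)=-x^{2}$, or $g=\log$ with $g(0^+)=-\infty$) even after adding an affine term. The paper instead uses the representation
\[
g(x)=g(1)+g'(1)(x-1)-\int_0^1 \frac{(x-1)^2}{1+s(x-1)}\,d\mu(s),
\]
valid for every operator concave $g:(0,\infty)\to\RR$, whose perspective building blocks are $\xi_s(X,Y)=-(Y-X)\bigl((1-s)X+sY\bigr)^{-1}(Y-X)$. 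The payoff is that the middle factor is a \emph{convex combination} of $X$ and $Y$, which yields the clean spectral bound (Lemma~\ref{lem:eta-bnd})
\[
\lambda_{\max}\!\bigl(Y_s^{-1/2}V_sY_s^{-1/2}\bigr)\le\max\{\|X^{-1/2}HX^{-1/2}\|,\|Y^{-1/2}VY^{-1/2}\|\},\quad Y_s=(1{-}s)X+sY,\ V_s=(1{-}s)H+sV,
\]
and this is precisely what forces the compatibility constant to be $1$, hence weight $1$ on each $-\log\det$ term and final parameter $2n+m$. Your $Y+sX$ lacks this convex-combination structure, so the ``constant equals $1$'' step you correctly flag as the crux is not available in the form you sketch.

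For the lower bound, your sketch has a gap: partial minimisation of a $\nu$-barrier over a block of variables does not in general produce a $\nu$-barrier for the projected cone, and restricting to a fixed $Z$ does not cut out $\S^n_+\times\S^n_+$. The paper avoids these manoeuvres entirely and applies Nesterov's recession-direction lower bound directly to $\cl\hypo(\phi\circ P_g)$, exhibiting explicit directions that yield $2n+m$ in one stroke (Theorem~\ref{thm:lb-technical} and Corollary~\ref{cor:psd-dom-lb}).
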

\begin{remark}
We remark that the closure of the matrix hypograph \eqref{eq:clhypo} can be computed explicitly depending on the properties of the function $g$. See Remark \ref{rem:domainpersp} and Appendix \ref{sec:domainpersp}.
\end{remark}
In the following corollaries, we specialize the result above to particular classes of functions of interest. 
For two positive semidefinite matrices $X,Y$, we use the following notation which will be useful for the rest of the paper
\[
X \ll Y \iff \ker(Y) \subset \ker(X).
\]

First we consider the \emph{Belavkin-Staszewski relative entropy} function \cite{bsentropy}
\begin{equation}
\label{eq:DBS}
D_{BS}(X|Y) = \tr(X \log(X^{1/2} Y^{-1} X^{1/2}))
\end{equation}
which is jointly convex with domain $\{(X,Y) \in \S^n_+ \times \S^n_+ :X \ll Y \}$. We note that $D_{BS}(X|Y)$ and $D(X|Y)$ are distinct functions, even though they coincide when $X$ and $Y$ commute.
\begin{corollary}
	\label{cor:DBS}
Consider the epigraph of the Belavkin-Staszewski relative entropy function
\[
\epi(D_{BS}) = \{(X,Y,z) \in \S^n_+ \times \S^n_+ \times \RR : X \ll Y \text{ and } D_{BS}(X|Y) \leq z\} \ ,
\]
which is a closed convex set.
Then the function $(X,Y,z) \mapsto -\log(z - D_{BS}(X|Y)) - \log \det X - \log \det Y$ defined on $\S^n_{++} \times \S^n_{++} \times \RR$ is a $(2n+1)$-logarithmically homogeneous self-concordant barrier for $\epi(D_{BS})$.
	Moreover, this barrier is optimal in the sense that any self-concordant barrier for $\epi(D_{BS})$
	has parameter at least $2n+1$.
\end{corollary}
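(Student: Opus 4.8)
The plan is to derive Corollary~\ref{cor:DBS} as the special case of Theorem~\ref{thm:main1} obtained by taking $g$ to be the logarithm and $\phi$ to be the trace, and then transporting the resulting barrier along an invertible linear change of coordinates. The key observation is that the Belavkin--Staszewski entropy is, up to a sign, the trace of the noncommutative perspective of the logarithm: the function $g(x)=\log x$ is operator concave, by~\eqref{eq:Plog} we have $P_{\log}(X,Y)=-X^{1/2}\log(X^{1/2}Y^{-1}X^{1/2})X^{1/2}$ on $\S^n_{++}\times\S^n_{++}$, and hence, taking traces, using cyclicity and~\eqref{eq:DBS},
\[
\tr\bigl(P_{\log}(X,Y)\bigr) \;=\; -\tr\bigl(X\log(X^{1/2}Y^{-1}X^{1/2})\bigr) \;=\; -D_{BS}(X|Y).
\]
I would therefore apply Theorem~\ref{thm:main1} with $g=\log$ and with the positive linear map $\phi=\tr\colon\S^n\to\S^1\cong\RR$, so that $m=1$. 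This directly gives that $(X,Y,Z)\mapsto-\log\bigl(\tr P_{\log}(X,Y)-Z\bigr)-\log\det X-\log\det Y$ is a $(2n+1)$-logarithmically homogeneous self-concordant barrier for the cone $\cl\{(X,Y,Z)\in\S^n_{++}\times\S^n_{++}\times\RR:\tr P_{\log}(X,Y)\ge Z\}$, and that every self-concordant barrier for that cone has parameter at least $2n+1$.

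The second step is to substitute $Z=-z$. This is an invertible linear automorphism of the ambient space $\S^n\times\S^n\times\RR$; it therefore carries the cone above to $\cl\{(X,Y,z)\in\S^n_{++}\times\S^n_{++}\times\RR:D_{BS}(X|Y)\le z\}$, carries $(2n+1)$-logarithmically homogeneous self-concordant barriers of one cone to $(2n+1)$-logarithmically homogeneous self-concordant barriers of the other, and preserves the optimal barrier parameter (which is an invariant of the linear-isomorphism class of a cone). Since $\tr P_{\log}(X,Y)-Z$ becomes $z-D_{BS}(X|Y)$ under this substitution, the transported barrier is exactly $(X,Y,z)\mapsto-\log\bigl(z-D_{BS}(X|Y)\bigr)-\log\det X-\log\det Y$. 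All that remains is to identify the cone $\cl\{(X,Y,z)\in\S^n_{++}\times\S^n_{++}\times\RR:D_{BS}(X|Y)\le z\}$ with $\epi(D_{BS})$ as defined in the statement.

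This last identification is the only place where genuine work is needed, and I would carry it out with the explicit description of the closure of a perspective hypograph referenced in Remark~\ref{rem:domainpersp} and Appendix~\ref{sec:domainpersp}, specialized to $g=\log$ and $\phi=\tr$. Concretely one proves two inclusions. For the inclusion ``$\subseteq$'': if a sequence $(X_k,Y_k,z_k)$ in the open hypograph converges to $(X_0,Y_0,z_0)$, then $X_0,Y_0\succeq0$; moreover $X_0\ll Y_0$, because otherwise $D_{BS}(X_k|Y_k)\to+\infty$, contradicting $z_k\to z_0<\infty$; and then $D_{BS}(X_0|Y_0)\le z_0$ by lower semicontinuity of the extended $D_{BS}$. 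For the inclusion ``$\supseteq$'': any triple $(X_0,Y_0,z_0)$ with $X_0,Y_0\succeq0$, $X_0\ll Y_0$ and $D_{BS}(X_0|Y_0)\le z_0$ is the limit, as $\eps\downarrow0$, of the interior points $\bigl(X_0+\eps I,\,Y_0+\eps I,\,\max(z_0,D_{BS}(X_0+\eps I\,|\,Y_0+\eps I))\bigr)$, which requires continuity of $D_{BS}$ along this path --- valid precisely because $X_0\ll Y_0$. Combined with joint convexity of $D_{BS}$, which makes $\epi(D_{BS})$ convex and (by lower semicontinuity) closed as asserted, these inclusions pin down the cone. I expect the boundary analysis of $D_{BS}$ --- the path continuity and the role of the condition $X\ll Y$ --- to be the main obstacle, since self-concordance, logarithmic homogeneity and optimality of the parameter are inherited wholesale from Theorem~\ref{thm:main1}.
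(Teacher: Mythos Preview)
Your proposal is correct and matches the paper's proof essentially line for line: apply Theorem~\ref{thm:main1} with $g=\log$ and $\phi=\tr$, use $\tr P_{\log}(X,Y)=-D_{BS}(X|Y)$, compose with $z\mapsto -z$, and identify the closure via the case $g(0^+)=-\infty$, $\hat g(0^+)>-\infty$ from Appendix~\ref{sec:domainpersp}. The two-inclusion argument you sketch for the closure is exactly what Theorem~\ref{thm:maincl1}(ii) encapsulates.
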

The next corollary deals with the  functions
\[
\hat{Q}_{\alpha}(X|Y) = \tr(X (X^{-1/2} Y X^{-1/2})^{1-\alpha}),
\]
which are concave for $\alpha \in [0,1]$ and convex for $\alpha \in [-1,0]\cup [1,2]$, and are used to define the geometric or maximal R\'enyi divergences \cite{matsumoto,fang2021geometric}.
\begin{corollary}
	\label{cor:R}{}
\begin{itemize}
\item For $\alpha \in [0,1]$, $\hat{Q}_{\alpha}$ is well-defined and concave on $\S^n_+ \times \S^n_+$. The function $(X,Y,z) \mapsto -\log(\hat{Q}_{\alpha}(X|Y) - z) - \log \det X - \log \det Y$ defined on $\S^n_{++} \times \S^n_{++} \times \RR$ is a $(2n+1)$-logarithmically homogeneous self-concordant barrier for
\[
\hypo(\hat{Q}_{\alpha}) = \{(X,Y,z) \in \S^n_+ \times \S^n_+ \times \RR : \hat{Q}_{\alpha}(X|Y) \geq z\}.
\]
		Moreover, this barrier is optimal in the sense that any self-concordant barrier for $\hypo(\hat{Q}_{\alpha})$
	has parameter at least $2n+1$.
\item For $\alpha \in [-1,0) \cup (1,2]$, $\hat{Q}_{\alpha}$ is well-defined and convex on
\[
\hat{\cD}_{\alpha} = \begin{cases}
	\{(X,Y) \in \S^n_+ \times \S^n_+ : Y \ll X\} & \textup{for $\alpha \in [-1,0)$}\\
	\{(X,Y) \in \S^n_+ \times \S^n_+ : X \ll Y\} & \textup{for $\alpha \in (1,2]$}.
\end{cases}
\]
The function $(X,Y,z) \mapsto -\log(z - \hat{Q}_{\alpha}(X|Y)) - \log \det X - \log \det Y$ defined on $\S^n_{++} \times \S^n_{++} \times \RR$ is a $(2n+1)$-self-concordant barrier for
\[
\epi(\hat{Q}_{\alpha}) = \{(X,Y,z) \in \S^n_+ \times \S^n_+ \times \RR : (X,Y) \in \hat{\cD}_{\alpha} \text{ and } \hat{Q}_{\alpha}(X|Y) \leq z\}.
\]
		Moreover, this barrier is optimal in the sense that any self-concordant barrier for $\epi(\hat{Q}_{\alpha})$
	has parameter at least $2n+1$.
\end{itemize}
\end{corollary}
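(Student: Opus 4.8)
The plan is to obtain Corollary~\ref{cor:R} from Theorem~\ref{thm:main1} by writing $\hat{Q}_\alpha$ as the composition of a noncommutative perspective with the trace map. Set $p = 1-\alpha$ and let $g_p(t) = t^p$ on $(0,\infty)$. By cyclicity of the trace,
$\tr\bigl(P_{g_p}(X,Y)\bigr) = \tr\bigl(X^{1/2}(X^{-1/2}YX^{-1/2})^{p}X^{1/2}\bigr) = \tr\bigl(X(X^{-1/2}YX^{-1/2})^{1-\alpha}\bigr) = \hat{Q}_\alpha(X|Y)$
for all $X,Y\in\S^n_{++}$. Since $\tr$ is a positive linear map $\S^n\to\S^1$, taking $\phi=\tr$ and $m=1$ in Theorem~\ref{thm:main1} (so that the term $-\log\det(\phi(P_g(X,Y))-Z)$ in~\eqref{eq:scPg} becomes $-\log(\hat{Q}_\alpha(X|Y)-z)$, identifying $\S^1$ with $\RR$), everything reduces to applying that theorem to a suitable operator concave function built from $g_p$, together with two standard facts: self-concordance and the barrier parameter are invariant under composition with an invertible affine map, and the optimal-parameter lower bound transfers along the same identifications.

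For $\alpha\in[0,1]$ we have $p\in[0,1]$, so by the L\"owner--Heinz theorem $g_p$ is operator concave on $(0,\infty)$ (the cases $p=0,1$, where $P_{g_p}$ equals $X$ or $Y$, being immediate). Theorem~\ref{thm:main1} with $g=g_p$, $\phi=\tr$ then produces a $(2n+1)$-logarithmically homogeneous self-concordant barrier for $\cl\{(X,Y,z)\in\S^n_{++}\times\S^n_{++}\times\RR : \tr(P_{g_p}(X,Y))\ge z\}$. It remains to identify this closure with $\hypo(\hat{Q}_\alpha)$, equivalently to show $P_{g_p}$ (hence $\hat{Q}_\alpha$) extends continuously to all of $\S^n_+\times\S^n_+$; because $g_p$ is bounded near $0$, this is the $\alpha\in[0,1]$ instance of the domain computation in Remark~\ref{rem:domainpersp} and Appendix~\ref{sec:domainpersp}. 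Optimality is exactly the last sentence of Theorem~\ref{thm:main1}.

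For $\alpha\in[-1,0)\cup(1,2]$ we have $p\in[-1,0)\cup(1,2]$, a range on which $g_p$ is operator \emph{convex}; hence $h:=-g_p$ is operator concave on $(0,\infty)$, $P_h=-P_{g_p}$, and $\tr(P_h(X,Y))=-\hat{Q}_\alpha(X|Y)$. Thus $\epi(\hat{Q}_\alpha)=\{(X,Y,z): \tr(P_h(X,Y))\ge -z\}$ once $\hat{\cD}_\alpha$ is identified with the natural domain of $P_h$ on $\S^n_+\times\S^n_+$; the dichotomy $X\ll Y$ (for $\alpha\in(1,2]$, where $p<0$) versus $Y\ll X$ (for $\alpha\in[-1,0)$, where $p>1$) reflects the sign of $p$ and is the content of Appendix~\ref{sec:domainpersp}. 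Applying Theorem~\ref{thm:main1} to $h$ and $\phi=\tr$ gives a $(2n+1)$-logarithmically homogeneous self-concordant barrier $(X,Y,w)\mapsto-\log(\tr(P_h(X,Y))-w)-\log\det X-\log\det Y$ for $\cl\{\tr(P_h(X,Y))\ge w\}$; composing with the linear bijection $(X,Y,z)\mapsto(X,Y,-z)$, which maps this cone onto $\epi(\hat{Q}_\alpha)$, yields the stated barrier $-\log(z-\hat{Q}_\alpha(X|Y))-\log\det X-\log\det Y$ with the same parameter $2n+1$ (and preserves optimality).

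The reduction above is mechanical; the one genuinely delicate ingredient is the identification, in each regime of $\alpha$, of the closed cone~\eqref{eq:clhypo} supplied by Theorem~\ref{thm:main1} with the explicitly described sets $\hypo(\hat{Q}_\alpha)$ on $\S^n_+\times\S^n_+$ and $\epi(\hat{Q}_\alpha)$ on $\hat{\cD}_\alpha$. This amounts to determining precisely when the noncommutative perspective of a power function extends continuously to singular $X$ or $Y$, and in which direction the kernel containment must run --- exactly what Remark~\ref{rem:domainpersp} and Appendix~\ref{sec:domainpersp} establish. Everything else (operator concavity/convexity of powers via L\"owner--Heinz, $\tr$ being a positive map into $\S^1$, affine invariance of self-concordance) is routine.
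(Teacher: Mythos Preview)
Your proof is correct and follows essentially the same route as the paper: write $\hat{Q}_\alpha = \tr\circ P_g$ for $g(x)=x^{1-\alpha}$, apply Theorem~\ref{thm:main1} (to $g$ when $\alpha\in[0,1]$ and to $-g$ when $\alpha\in[-1,0)\cup(1,2]$), compose with $z\mapsto -z$ in the convex case, and invoke Appendix~\ref{sec:domainpersp} to identify the closed cone~\eqref{eq:clhypo} with the stated $\hypo(\hat{Q}_\alpha)$ or $\epi(\hat{Q}_\alpha)$. One small caveat on wording: for $\alpha\in(0,1)$ the perspective $P_{g_p}$ does \emph{not} extend continuously to all of $\S^n_+\times\S^n_+$ (see the remark after Theorem~\ref{thm:maincl1} for $g(x)=x^{1/2}$); what Appendix~\ref{sec:domainpersp} actually gives is that the closure of the hypograph is described via the limit extension $\lim_{\epsilon\downarrow 0}\phi(P_g(X+\epsilon I,Y+\epsilon I))$, which is exactly what you need here.
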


The next theorem allows us to obtain self-concordant barriers for the quantum relative entropy \eqref{eq:qre}, and more generally for functions of the same form as \eqref{eq:qretensor}.
\begin{theorem}
\label{thm:main2}
	Let $g:(0,\infty)\to \RR$ be an operator concave function, and let $P_g$ as in \eqref{eq:perspective} be its noncommutative perspective. Let $\phi:\S^{n_1n_2} \to \S^m$ be a positive linear map and consider the (concave) function $Q_g^\phi:\S_{++}^{n_1}\times \S_{++}^{n_2}\rightarrow \S^m$ defined by
\begin{equation}
\label{eq:Qgdef}
	Q_g^{\phi}(X|Y) = \phi(P_g(X\otimes I, I \otimes \bar Y)).
\end{equation}
Then the function
\begin{equation}
\label{eq:scQg}
(X,Y,Z) \mapsto -\log\det(Q_g^\phi(X|Y)-Z) -\log \det X - \log \det Y
\end{equation}
	is a $(n_1+n_2+m)$-logarithmically homogeneous self-concordant barrier for 
\[
	\cl\hypo(Q_g^\phi) = \cl \left\{(X,Y,Z) \in \S^{n_1}_{++}\times \S^{n_2}_{++} \times \S^m : Q_g^\phi(X|Y) \psd Z \right\}.
\]
	Moreover,
	this barrier is optimal in the sense that any self-concordant barrier for $\cl\hypo(Q_{g}^\phi)$
	has parameter at least $n_1+n_2+m$.
\end{theorem}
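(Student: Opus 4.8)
We begin by observing that the function in~\eqref{eq:scQg} is $(n_1+n_2+m)$-logarithmically homogeneous --- immediate from $P_g(tA,tB)=t\,P_g(A,B)$, hence $Q_g^\phi(tX|tY)=t\,Q_g^\phi(X|Y)$, together with the homogeneity of $\log\det$. Since a logarithmically homogeneous self-concordant function is automatically a self-concordant barrier with parameter equal to its degree of homogeneity, it suffices to prove (a) that the function in~\eqref{eq:scQg} is self-concordant on the open set $\{(X,Y,Z):X\pd 0,\,Y\pd 0,\,Q_g^\phi(X|Y)\pd Z\}$, and (b) that every self-concordant barrier for $\cl\hypo(Q_g^\phi)$ has parameter at least $n_1+n_2+m$.

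For~(a) we would first extract a weaker statement directly from Theorem~\ref{thm:main1} as a black box. Apply Theorem~\ref{thm:main1} with $n$ replaced by $n_1n_2$ (keeping the same $g$ and the same positive map $\phi$) and restrict the resulting $(2n_1n_2+m)$-barrier to the linear subspace $\{(X\otimes I_{n_2},\,I_{n_1}\otimes\bar Y,\,Z)\}$, which meets the interior of the cone and on which $\log\det(X\otimes I_{n_2})=n_2\log\det X$ and $\log\det(I_{n_1}\otimes\bar Y)=n_1\log\det Y$. Since the restriction of a self-concordant barrier to a subspace meeting the interior is again a self-concordant barrier with the same parameter, this shows that $(X,Y,Z)\mapsto -\log\det(Q_g^\phi(X|Y)-Z)-n_2\log\det X-n_1\log\det Y$ is a $(2n_1n_2+m)$-logarithmically homogeneous self-concordant barrier for $\cl\hypo(Q_g^\phi)$ (in particular this cone is proper, and $Q_g^\phi$ is jointly concave since $Q_g^\phi=\phi\circ P_g\circ\iota$ with $\iota(X,Y)=(X\otimes I_{n_2},\,I_{n_1}\otimes\bar Y)$). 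The content of Theorem~\ref{thm:main2} is to replace the coefficients $n_2,n_1$ by $1,1$, sharpening the parameter from $2n_1n_2+m$ to $n_1+n_2+m$. This cannot be done by the restriction above alone: the self-concordance analysis behind Theorem~\ref{thm:main1} rests on a third-derivative ``compatibility'' estimate bounding $\pm D^3 P_g(A,B)[\Delta,\Delta,\Delta]$ by $3\,\|\Delta\|_{(A,B)}\,(-D^2 P_g(A,B)[\Delta,\Delta])$ with $\|\Delta\|_{(A,B)}^2=\tr(A^{-1}\Delta_AA^{-1}\Delta_A)+\tr(B^{-1}\Delta_BB^{-1}\Delta_B)$, and restricting this to the directions $\Delta=(H\otimes I_{n_2},\,I_{n_1}\otimes\bar K)$ costs a factor $\sqrt{n_2}$ (resp.\ $\sqrt{n_1}$), since $\|H\otimes I_{n_2}\|_{X\otimes I_{n_2}}=\sqrt{n_2}\,\|H\|_X$ and $\|I_{n_1}\otimes\bar K\|_{I_{n_1}\otimes\bar Y}=\sqrt{n_1}\,\|K\|_Y$. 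The plan is therefore to re-run the compatibility analysis tailored to the tensor structure and prove the un-inflated estimate
\[
\pm D^3 Q_g^\phi(X,Y)[h,h,h]\;\nsd\;3\bigl(\tr(X^{-1}HX^{-1}H)+\tr(Y^{-1}KY^{-1}K)\bigr)^{1/2}\bigl(-D^2 Q_g^\phi(X,Y)[h,h]\bigr),\qquad h=(H,K),
\]
which --- together with the same bookkeeping as in Theorem~\ref{thm:main1} --- yields self-concordance of~\eqref{eq:scQg} with the claimed parameter. To prove it, use the integral representation of operator concave functions on $(0,\infty)$ to write $P_g$, up to an affine part, as a nonnegative combination (in a limiting sense) of the elementary perspectives $P_{h_s}$, $h_s(t)=t/(t+s)$, for which
\[
P_{h_s}(X\otimes I_{n_2},\,I_{n_1}\otimes\bar Y)=X\otimes I_{n_2}-s\,(X\otimes I_{n_2})\bigl((I_{n_1}\otimes\bar Y)+s(X\otimes I_{n_2})\bigr)^{-1}(X\otimes I_{n_2}),
\]
and differentiate these explicitly. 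Because $X\otimes I_{n_2}$ and $I_{n_1}\otimes\bar Y$ commute, the relevant derivatives are block-diagonal in the joint eigenbasis, and this is exactly what lets the $n_2$ repeated factors of $-\log\det(X\otimes I_{n_2})=-n_2\log\det X$ (resp.\ the $n_1$ of $-\log\det(I_{n_1}\otimes\bar Y)$) collapse, after applying $\phi$ and combining with $-\log\det(Q_g^\phi(X|Y)-Z)$, into a single copy $-\log\det X$ (resp.\ $-\log\det Y$); summing over building blocks and integrating back over $s$ finishes~(a).

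For~(b) we would adapt the lower-bound part of Theorem~\ref{thm:main1}, where the tensor structure is irrelevant. Using that a self-concordant barrier for a cone restricts, with unchanged parameter, to a self-concordant barrier of any affine section meeting the interior, one isolates the three positive-semidefiniteness constraints $X\psd 0$, $Y\psd 0$ and $Q_g^\phi(X|Y)\psd Z$ by restricting a hypothetical barrier to suitable sections that degenerate to copies of $\S^{n_1}_+$, $\S^{n_2}_+$ and $\S^m_+$; since the minimal barrier parameters of these cones are $n_1$, $n_2$ and $m$, the parameter of any self-concordant barrier for $\cl\hypo(Q_g^\phi)$ is at least $n_1+n_2+m$.

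The step I expect to be the main obstacle is the refined compatibility estimate in~(a): showing that channelling the perspective through the tensor embedding $(X,Y)\mapsto(X\otimes I_{n_2},\,I_{n_1}\otimes\bar Y)$ does not cost the factors $\sqrt{n_2}$, $\sqrt{n_1}$ that a black-box use of Theorem~\ref{thm:main1} incurs. This $\sqrt{n_i}$ saving --- the difference between the suboptimal parameter $2n_1n_2+m$ and the optimal $n_1+n_2+m$ --- is precisely where the commuting tensor-product structure of the two arguments of $P_g$ must be exploited, and it is what makes Theorem~\ref{thm:main2} more than a corollary of Theorem~\ref{thm:main1}.
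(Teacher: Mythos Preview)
Your overall architecture for part~(a) matches the paper's: reduce to a third-derivative compatibility estimate for $\xi=Q_g^\phi$ against $F(X,Y)=-\log\det X-\log\det Y$, use the integral representation of $g$ to reduce to a one-parameter family of rational perspectives, and then invoke the Nesterov--Nemirovski machinery. You also correctly diagnose the obstruction to deducing this from Theorem~\ref{thm:main1}: the Frobenius norm $\|H\otimes I_{n_2}\|_{X\otimes I_{n_2}}$ inflates by $\sqrt{n_2}$.

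However, the specific mechanism you propose for the $\sqrt{n_i}$ saving is not the right one, and as stated it does not work. The derivatives $D^k\xi_s(X\otimes I,I\otimes\bar Y)[H\otimes I,I\otimes\bar V]$ are \emph{not} block-diagonal in the joint eigenbasis of $X\otimes I$ and $I\otimes\bar Y$: while $Y_s=(1-s)(X\otimes I)+s(I\otimes\bar Y)$ is diagonal there, $V_s=(1-s)(H\otimes I)+s(I\otimes\bar V)$ is not, because $H$ need not commute with $X$ and $V$ need not commute with $Y$. The paper's argument is different and simpler. In the proof of the compatibility inequality for $\xi_s$ (Lemma~\ref{lem:compat1}), the quantity that actually needs to be bounded is the \emph{spectral} norm $\lambda_{\max}(B_s)$ with $B_s=Y_s^{-1/2}V_sY_s^{-1/2}$; Lemma~\ref{lem:eta-bnd} controls this by $\max\{\|X^{-1/2}HX^{-1/2}\|,\|Y^{-1/2}VY^{-1/2}\|\}$ (spectral norms), and only at the very end is this majorized by the Frobenius norm appearing in $D^2F$. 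In the tensored setting one applies Lemma~\ref{lem:eta-bnd} to $X\otimes I,I\otimes\bar Y,H\otimes I,I\otimes\bar V$, and the crucial point is that the spectral norm is invariant under tensoring with the identity: $\|(X\otimes I)^{-1/2}(H\otimes I)(X\otimes I)^{-1/2}\|=\|X^{-1/2}HX^{-1/2}\|$. This is what avoids the $\sqrt{n_i}$ factors (Lemma~\ref{lem:compat2}); commutativity of the two tensor factors plays no role.

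Your argument for part~(b) has a genuine gap. Restricting a $\nu$-self-concordant barrier to an affine section gives another $\nu$-barrier, so restricting to three separate sections that look like $\S^{n_1}_+$, $\S^{n_2}_+$, $\S^m_+$ yields only $\nu\geq n_1$, $\nu\geq n_2$, $\nu\geq m$, hence $\nu\geq\max(n_1,n_2,m)$, not $\nu\geq n_1+n_2+m$. Barrier-parameter lower bounds are not additive over sections in this way. The paper obtains the additive bound via Nesterov's recession-direction criterion \cite[Theorem~5.4.1]{nesterov2018lectures}: one exhibits, at a single interior point, $k+k'$ recession directions and positive weights for which the weighted ratios sum to (arbitrarily close to) $n_1+n_2+m$. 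Concretely, one first reduces to the scalar orthant case $h:\RR_{++}^{n_1+n_2}\to\S^m$ by restricting to diagonal $X,Y$ (Corollary~\ref{cor:psd-dom-lb}), and then applies the recession-direction argument with $n_1+n_2$ near-coordinate directions in the domain and $m$ rank-one directions in the codomain (Proposition~\ref{prop:orthant-dom-lb} and Theorem~\ref{thm:lb-technical}).
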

\begin{remark}
	In Theorem~\ref{thm:main2}, the notation $\bar{Y}$ denotes the entry-wise complex conjugate of the 
	Hermitian matrix $Y$. We have stated \eqref{eq:Qgdef} in this way because the  
	positive linear map $\p$ that sends $X\otimes \bar{Y}$ to $\tr(XY)$ plays a particularly important role 
	in the applications of this result that follow. 
	(In the paragraph related to Kronecker products in Section~\ref{sec:prelim} we discuss this positive linear map further, and 
	will see why the entry-wise complex conjugate appears.)
\end{remark}
We obtain the quantum relative entropy, and the divergences \eqref{eq:petzdivergence} as a special case when $g$ is respectively the logarithm function, and the  power functions.
\begin{corollary}
	\label{cor:qre}
Let $D(X|Y) = \tr(X\log X - X \log Y)$ be the quantum relative entropy function defined for positive semidefinite matrices $X,Y \in \S^n_+$ such that $X \ll Y$, and consider its epigraph
\[
\epi(D) = \{(X,Y,z) \in \S^n_+ \times \S^n_+ \times \RR : X \ll Y \text{ and } D(X|Y) \leq z\}.
\]
Then the function $(X,Y,z) \mapsto -\log(z - D(X|Y)) - \log \det X - \log \det Y$ defined on $\S^n_{++} \times \S^n_{++} \times \RR$ is a $(2n+1)$-logarithmically homogeneous self-concordant barrier for $\epi(D)$.
		Moreover, this barrier is optimal in the sense that any self-concordant barrier for $\epi(D)$
	has parameter at least $2n+1$.
\end{corollary}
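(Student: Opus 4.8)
The plan is to obtain Corollary~\ref{cor:qre} as the special case of Theorem~\ref{thm:main2} in which $g = \log$ (operator concave by L\"owner's theorem), $n_1 = n_2 = n$, $m = 1$, and $\phi = \p$ is the positive linear map $\S^{n^2}\to\RR$ discussed in Section~\ref{sec:prelim} that sends $X\otimes\bar Y$ to $\tr(XY)$; concretely $\p(W) = \langle\omega|W|\omega\rangle$ for the (unnormalised) maximally entangled vector $\omega = \sum_i e_i\otimes e_i$, which is manifestly positive. With these choices $Q_g^\phi$ is scalar-valued, so $\S^m = \RR$ and $\psd$ becomes $\geq$, and the tensor identity~\eqref{eq:qretensor} gives
\[
Q_{\log}^{\p}(X|Y) = \p\bigl(P_{\log}(X\otimes I,\, I\otimes\bar Y)\bigr) = -D(X|Y) \qquad \text{for all } X,Y\in\S^n_{++}.
\]

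The invertible linear change of variables $(X,Y,z)\mapsto(X,Y,-z)$ maps the set $\hypo(Q_{\log}^{\p}) = \{(X,Y,z)\in\S^n_{++}\times\S^n_{++}\times\RR : -D(X|Y)\geq z\}$ onto $\{(X,Y,z) : D(X|Y)\leq z\}$, and transforms the barrier~\eqref{eq:scQg} into exactly $(X,Y,z)\mapsto -\log(z - D(X|Y)) - \log\det X - \log\det Y$. Since self-concordance, the barrier parameter, and logarithmic homogeneity are all invariant under invertible linear maps, Theorem~\ref{thm:main2} immediately yields that this function is a $(2n+1)$-logarithmically homogeneous self-concordant barrier for the image of $\cl\hypo(Q_{\log}^{\p})$ under this change of variables, and the matching lower bound $2n+1$ on the parameter of any self-concordant barrier transfers in the same way.

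It remains to identify this image with $\epi(D)$ exactly as defined in the statement, i.e.\ to check that $\cl\hypo(Q_{\log}^{\p})$ equals $\{(X,Y,z)\in\S^n_+\times\S^n_+\times\RR : X\ll Y \text{ and } -D(X|Y)\geq z\}$. Using the general computation of the closure of $\hypo(Q_g^\phi)$ (Remark~\ref{rem:domainpersp} and Appendix~\ref{sec:domainpersp}), this comes down to checking that for $g=\log$ the relevant boundary condition is $X\otimes I \ll I\otimes\bar Y$, which reduces to $X\ll Y$ because $\ker(X\otimes I) = \ker X\otimes\CC^n$ and $\ker(I\otimes\bar Y) = \CC^n\otimes\ker\bar Y$; equivalently one invokes that the extension of $D$ by the formula $\tr(X\log X - X\log Y)$ on $\{X\ll Y\}$ (and $+\infty$ elsewhere) is lower semicontinuous and convex on $\S^n_+\times\S^n_+$, so $\epi(D)$ is closed, together with density of the strictly feasible points $\S^n_{++}\times\S^n_{++}\times\RR$. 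I expect this closure/domain bookkeeping to be the only delicate point; the self-concordance, optimality, and homogeneity are inherited wholesale from Theorem~\ref{thm:main2}.
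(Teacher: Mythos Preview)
Your strategy---specialize Theorem~\ref{thm:main2} with $g=\log$, $n_1=n_2=n$, $m=1$, $\phi=\p$, then use the sign flip $z\mapsto -z$---matches the paper exactly, and the self-concordance, logarithmic homogeneity, and optimality indeed transfer wholesale.

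The genuine gap is in the closure identification. Your claim that ``$X\otimes I\ll I\otimes\bar Y$ reduces to $X\ll Y$'' is false. From $\ker(X\otimes I)=\ker X\otimes\CC^n$ and $\ker(I\otimes\bar Y)=\CC^n\otimes\ker\bar Y$, the inclusion $\CC^n\otimes\ker\bar Y\subset\ker X\otimes\CC^n$ forces either $\ker\bar Y=\{0\}$ (i.e.\ $Y$ invertible) or $\ker X=\CC^n$ (i.e.\ $X=0$); it does \emph{not} give $X\ll Y$. The paper flags exactly this issue (see the opening of Appendix~\ref{app:Q}): one cannot read the domain of $Q_g$ off from that of $P_g$ when $g(0^+)=-\infty$, because the tensor-product embedding destroys the absolute-continuity relation. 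Instead the paper proves the closure directly as item~(ii) of Theorem~\ref{thm:maincl2} via Lemma~\ref{lem:Qdom}, which computes $Q_g(X+\epsilon I\mid Y+\epsilon I)$ explicitly using the joint spectral decomposition $Q_g(X\mid Y)=\sum_{i,j}\mu_j\,\hat g(\lambda_i/\mu_j)\tr(P_iQ_j)$ and shows the limit is finite iff $X\ll Y$.

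Your fallback---appeal to lower semicontinuity of $D$ on $\{X\ll Y\}$ and density of the interior---could be made to work, but it is not ``equivalent'' to the kernel argument (which fails), and as stated it still needs a proof that the claimed epigraph is actually the closure and not a proper closed superset. Either cite Theorem~\ref{thm:maincl2}(ii) directly, or supply that lower-semicontinuity argument in full.
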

The next corollary deals with the  functions
$Q_{\alpha}(X|Y) = \tr(X^{\alpha} Y^{1-\alpha})$ which are concave for $\alpha \in [0,1]$ and convex for $\alpha \in [-1,0]\cup [1,2]$, and are used to define the Petz R\'enyi divergences \cite{petz1986quasi} (see also \cite[Section 4.4]{tomamichel2015quantum}).
\begin{corollary}
	\label{cor:Qalpha}
\begin{itemize}
\item For $\alpha \in [0,1]$, $Q_{\alpha}$ is well-defined and concave on $\S^n_+ \times \S^n_+$. The function $(X,Y,z) \mapsto -\log(Q_{\alpha}(X|Y) - z) - \log \det X - \log \det Y$ defined on $\S^n_{++} \times \S^n_{++} \times \RR$ is a $(2n+1)$-logarithmically homogeneous self-concordant barrier for
\[
\hypo(Q_{\alpha}) = \{(X,Y,z) \in \S^n_+ \times \S^n_+ \times \RR : Q_{\alpha}(X|Y) \geq z\}.
\]
		Moreover, this barrier is optimal in the sense that any self-concordant barrier for $\hypo(Q_{\alpha})$
	has parameter at least $2n+1$.
\item For $\alpha \in [-1,0) \cup (1,2]$, $Q_{\alpha}$ is well-defined and convex on
\[
\cD_{\alpha} = \begin{cases}
	\{(X,Y) \in \S^n_+ \times \S^n_+ : Y \ll X\} & \textup{for $\alpha \in [-1,0)$}\\
	\{(X,Y) \in \S^n_+ \times \S^n_+ : X \ll Y\} & \textup{for $\alpha \in (1,2]$}.
\end{cases}
\]
The function $(X,Y,z) \mapsto -\log(z - Q_{\alpha}(X|Y)) - \log \det X - \log \det Y$ defined on $\S^n_{++} \times \S^n_{++} \times \RR$ is a $(2n+1)$-logarithmically homogeneous self-concordant barrier for
\[
\epi(Q_{\alpha}) = \{(X,Y,z) \in \S^n_+ \times \S^n_+ \times \RR : (X,Y) \in \cD_{\alpha} \text{ and } Q_{\alpha}(X|Y) \leq z\}.
\]
		Moreover, this barrier is optimal in the sense that any self-concordant barrier for $\epi(Q_{\alpha})$
	has parameter at least $2n+1$.
\end{itemize}
\end{corollary}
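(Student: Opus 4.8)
The plan is to realize $Q_\alpha$ as an instance of the function $Q_g^\phi$ of Theorem~\ref{thm:main2}. Take $g(x)=x^{1-\alpha}$ and let $\phi=\p\colon\S^{n^2}\to\RR$ be the positive linear map with $\p(A\otimes\bar B)=\tr(AB)$ introduced in Section~\ref{sec:prelim}, so that $n_1=n_2=n$ and $m=1$. When $\alpha\in[0,1]$ we have $1-\alpha\in[0,1]$, so $g$ is operator concave on $(0,\infty)$ and Theorem~\ref{thm:main2} applies directly. When $\alpha\in[-1,0)\cup(1,2]$ we have $1-\alpha\in(1,2]\cup[-1,0)$, so $g$ is operator \emph{convex} on $(0,\infty)$ (recall $x\mapsto x^t$ is operator convex for $t\in[-1,0]\cup[1,2]$); then $-g$ is operator concave and $Q_{-g}^\p=-Q_\alpha$, so Theorem~\ref{thm:main2} applied to $-g$ yields a barrier for $\cl\hypo(Q_{-g}^\p)$, which becomes the desired barrier for $\epi(Q_\alpha)$ after composition with the affine reflection $(X,Y,z)\mapsto(X,Y,-z)$ (affine maps preserve self-concordance and barrier parameters). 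Thus in all cases the corollary reduces to two things: the identity $Q_g^\p=Q_\alpha$, and the identification of the abstract closed cone produced by Theorem~\ref{thm:main2} with the explicit set in the statement.

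The identity $Q_g^\p=Q_\alpha$ follows from the Kronecker-power rule $(A\otimes B)^t=A^t\otimes B^t$ (for $A,B\succ0$ and $t\in\RR$) together with the definition~\eqref{eq:perspective} of the noncommutative perspective: for $X,Y\in\S^n_{++}$ one has $(X\otimes I)^{-1/2}(I\otimes\bar Y)(X\otimes I)^{-1/2}=X^{-1}\otimes\bar Y$, hence
\[
	P_g(X\otimes I,\,I\otimes\bar Y)=(X^{1/2}\otimes I)\bigl(X^{\alpha-1}\otimes\bar Y^{1-\alpha}\bigr)(X^{1/2}\otimes I)=X^{\alpha}\otimes\bar Y^{1-\alpha},
\]
and applying $\p$ gives $Q_g^\p(X|Y)=\p\bigl(X^{\alpha}\otimes\overline{Y^{1-\alpha}}\bigr)=\tr(X^{\alpha}Y^{1-\alpha})=Q_\alpha(X|Y)$. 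Since $m=1$, the barrier~\eqref{eq:scQg} then specializes exactly to $-\log(Q_\alpha(X|Y)-z)-\log\det X-\log\det Y$ in the concave case and (after the reflection) to $-\log(z-Q_\alpha(X|Y))-\log\det X-\log\det Y$ in the convex case. This part of the argument runs parallel to the proof of Corollary~\ref{cor:R}, with Theorem~\ref{thm:main2} in the role of Theorem~\ref{thm:main1}.

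The step I expect to be the main obstacle is the identification of domains. Theorem~\ref{thm:main2} produces a barrier for $\cl\hypo(Q_g^\p)$ where the hypograph is taken over the \emph{open} cone $\S^n_{++}\times\S^n_{++}$, and I would need to verify that this closure equals $\{(X,Y,z)\in\S^n_+\times\S^n_+\times\RR : Q_\alpha(X|Y)\ge z\}$ when $\alpha\in[0,1]$, and that the reflection of $\cl\hypo(Q_{-g}^\p)$ equals $\epi(Q_\alpha)$ over $\cD_\alpha$ when $\alpha\in[-1,0)\cup(1,2]$. The kernel conditions defining $\cD_\alpha$ arise precisely from the boundary behaviour of $g(x)=x^{1-\alpha}$: this $g$ extends continuously to $x=0$ when $\alpha\in[0,1]$ (no condition), whereas the divergence of $x^{1-\alpha}$ as $x\to0^+$ for $\alpha\in(1,2]$ forces $X\ll Y$ and its divergence as $x\to\infty$ for $\alpha\in[-1,0)$ forces $Y\ll X$. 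Making this precise is exactly the general domain computation for noncommutative perspectives recorded in Remark~\ref{rem:domainpersp} and Appendix~\ref{sec:domainpersp}, combined with continuity of $Q_\alpha$ on the relevant closed set and density of $\S^n_{++}\times\S^n_{++}$ in $\S^n_+\times\S^n_+$. Finally, the optimality claims are immediate from Theorem~\ref{thm:main2}: its lower bound $n_1+n_2+m=2n+1$ applies to $\hypo(Q_\alpha)$ directly, and, since the smallest barrier parameter of a convex set is an affine invariant, it transfers through the reflection to $\epi(Q_\alpha)$.
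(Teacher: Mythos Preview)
Your proposal is correct and follows essentially the same approach as the paper: you choose $g(x)=x^{1-\alpha}$ and $\phi=\p$, verify $P_g(X\otimes I,I\otimes\bar Y)=X^{\alpha}\otimes\bar Y^{1-\alpha}$ via the Kronecker-power rule so that $Q_g^\p=Q_\alpha$, apply Theorem~\ref{thm:main2} (to $g$ or $-g$ according to the sign of concavity), and compose with $z\mapsto -z$ in the convex case. The only sharpening worth noting is that the closure identification you flag as the ``main obstacle'' is precisely the content of Theorem~\ref{thm:maincl2} in Appendix~\ref{sec:domainpersp} (not Remark~\ref{rem:domainpersp}, which treats $P_g$ rather than $Q_g$): items (i), (ii), and (iii) there give exactly the sets $\S^n_+\times\S^n_+$, $\{X\ll Y\}$, and $\{Y\ll X\}$ you need, and the paper's proof cites these directly rather than arguing via continuity and density.
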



\section{Preliminaries}
\label{sec:prelim}

In this section we summarize necessary preliminary background and notation related to self-concordant functions and barriers, matrix 
monotone and matrix concave functions, and the noncommutative perspective operation. 

\paragraph{Directional derivatives} If $E$ and $F$ are two real vector spaces, and $f:\dom f \subset E\to F$ is a $C^k$ function defined on an open subset $\dom f \subset E$, we define
\[
D^k f(x)[h] = \left.\frac{d^k}{dt^k} f(x+th)\right|_{t=0}
\]
for $x \in \dom f$ and $h\in E$. Note that $D^k f(x)[h]$ is homogeneous of degree $k$ in $h$, i.e., $D^1 f(x)[h]$ is linear, $D^2 f(x)[h]$ is quadratic, etc. The Taylor expansion tells us that
\[
f(x+h) = f(x) + D^1 f(x)[h] + \frac{1}{2} D^2 f(x)[h] + \dots + \frac{1}{k!} D^k f(x)[h] + O(h^{k+1}).
\]

\paragraph{Self-concordant functions and barrier} A convex function $f:\RR^n\to \RR \cup \{+\infty\}$ is \emph{self-concordant} \cite[Def. 5.1.1]{nesterov2018lectures} if the following three conditions hold:
\begin{itemize}
\item $\dom(f) = \{x \in \RR^n : f(x) < +\infty\}$ is open
\item $\epi(f) = \{(x,t) \in \RR^n \times \RR : f(x) \leq t\}$ is closed
\item $f$ is $C^3$ on its domain and
\begin{equation}
\label{eq:scdef}
|D^3 f(x)[h]| \leq 2 (D^2 f(x)[h])^{3/2}
\end{equation}
for all $x \in \dom(f)$ and all $h \in \RR^n$.
\end{itemize}
We say that it is a \emph{barrier} for the closed convex set $Q \subset \RR^n$ if $\cl(\dom f) = Q$; moreover, we say that it is a $\nu$-barrier \cite[Def. 5.3.2]{nesterov2018lectures} if
\begin{equation}
	\label{eq:nu-def}
2 Df(x)[h] - D^2 f(x)[h] \leq \nu
\end{equation}
for all $x \in \dom(f)$ and $h \in \RR^n$. (If $\nabla^2 f(x)$ is invertible for all $x \in \dom(f)$, then condition~\eqref{eq:nu-def} is equivalent to $\langle \nabla f(x),[\nabla^2f(x)]^{-1}\nabla f(x)\rangle \leq \nu$ for all $x \in \dom(f)$, i.e., the gradient is bounded by $\nu$ in the quadratic form defined by the inverse Hessian~\cite[Eq.~(5.3.5)]{nesterov2018lectures}. This is obtained by explicitly maximizing the left hand side of~\eqref{eq:nu-def} with respect to $h$.)
When $\dom(f)$ is a convex cone, we say that $f$ is $\nu$-\emph{logarithmically homogeneous} \cite[Def. 5.4.1]{nesterov2018lectures} if
\[
f(\tau x) = f(x) - \nu \log \tau\quad \forall x \in \dom(f) \text{ and } \tau > 0.
\]
If $f$ is $\nu$-logarithmic homogeneous then it is automatically a $\nu$-barrier \cite[Lem. 5.4.3]{nesterov2018lectures}. Note that if $\cA:\RR^m\to \RR^n$ is a linear map, and $f$ is self-concordant, then so is $f\circ \cA:\RR^m\to \RR$ with domain $\cA^{-1}(\dom f)$. Furthermore, if $f$ is a $\nu$-barrier for $\cl \dom f$, then $f \circ \cA$ is a $\nu$-barrier for $\cl(\cA^{-1}(\dom f))$  \cite[Thm 5.3.3]{nesterov2018lectures}.

\begin{example}
\label{ex:-logdet}
Consider the function $f(X) = -\log \det X$ defined on $\S^n_{++}$. For this $f$, if
	we let $A = X^{-1/2} H X^{-1/2}$ then we have
\[
\begin{aligned}
Df(X)[H] &= -\tr(X^{-1} H) = -\tr(A)\\
D^2 f(X)[H] &= \tr(X^{-1} H X^{-1} H) = \|A\|_F^2\\
D^3 f(X)[H] &= -2\tr(X^{-1} H X^{-1} H X^{-1} H) = -2\tr(A^3).
\end{aligned}
\]
Self-concordance follows from the fact that $|\tr(A^3)|^{1/3} \leq |\tr(A^2)|^{1/2}$. Also we see that $f$ is $n$-logarithmically homogeneous.
\end{example}

\paragraph{Matrix monotone and matrix concave functions} If $g:I\to \RR$ where $I\subset \RR$ is an interval, and $X$ is a Hermitian matrix with spectral decomposition $X = \sum_{i} \lambda_i v_i v_i^*$ with eigenvalues $\lambda_i \in I$ for all $i$, we let $g(X) = \sum_{i} g(\lambda_i) v_i v_i^*$. We say that $g$ is \emph{operator monotone} if
\[
X \psd Y \implies g(X) \psd g(Y).
\]
We say that $g:I\to \RR$ is \emph{operator concave} if 
for all Hermitian matrices $X,Y$ (of any size) with eigenvalues in $I$, and all $\lambda \in [0,1]$,
\begin{equation}
	\label{eq:op-concave-def}
g(\lambda X + (1-\lambda) Y) \psd \lambda g(X) + (1-\lambda) g(Y).
\end{equation}
In this paper we are going to be primarily concerned with functions $g:(0,\infty)\to \RR$
that are operator concave. 
Important examples of operator concave functions
are $g(x) = \log x$, $g(x) = x^{\alpha}$ for $\alpha \in [0,1]$ and $g(x) =
-x^{\alpha}$ for $\alpha \in [-1,0]$. These functions happen to be operator
monotone too. However not all operator concave functions are operator monotone,
for example the functions $g(x)=-x^{\alpha}$ for $\alpha \in (1,2]$ are
operator concave but not operator monotone. Note, also, that monotone functions need not 
be operator monotone, and that concave functions need not be operator concave. For example
$-e^{-x}$ is monotone and concave, but is neither operator monotone nor operator concave on any interval.
Although we mostly focus on examples 
related to the logarithm and power functions, the class of operator concave functions
includes other interesting examples, such as the negative of the 
log gamma function~\cite{uchiyama2010operator}.

The following theorem (see Appendix~\ref{sec:intrepoperatorconcave}) shows that any operator concave function $g:(0,\infty)\to \RR$ can be expressed as an integral of rational functions.
\begin{theorem}
\label{thm:intrep}
If $g:(0,\infty) \to \RR$ is an operator concave function, then $g$ is analytic and there is a finite positive measure $\mu$ supported on $[0,1]$ such that
\begin{equation}
\label{eq:intrep}
g(x) = g(1) + g'(1)(x-1) - \int_{0}^{1} \frac{(x-1)^2}{1+s(x-1)} d\mu (s) \qquad \forall x > 0.
\end{equation}
\end{theorem}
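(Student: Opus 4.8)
The plan is to reduce the statement to the known integral representation of operator monotone functions via the standard trick relating operator concavity on $(0,\infty)$ to operator monotonicity. First I would recall the classical result (Löwner's theorem, see e.g. Bhatia's \emph{Matrix Analysis}, Ch. V) that a function $h:(0,\infty)\to\RR$ is operator monotone if and only if it is analytic, extends to a Pick function on the upper half-plane, and admits the representation $h(x) = a + bx + \int_{0}^{\infty} \frac{x}{x+t}\,d\rho(t)$ with $b\ge 0$ and $\rho$ a positive measure for which the integral converges; in particular $h$ is automatically analytic on $(0,\infty)$. The key structural fact I would invoke is: for $g:(0,\infty)\to\RR$, $g$ is operator concave if and only if the function $x\mapsto g(x)/x$ is operator monotone \emph{decreasing} on $(0,\infty)$ (equivalently $x\mapsto -g(x)/x + c$ is operator monotone for suitable $c$), and also if and only if $g$ itself is operator monotone when it happens to be nonnegative — but the cleanest route is through $g(x)/x$. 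An alternative, which may be more transparent, is to use that $g$ operator concave on $(0,\infty)$ $\iff$ the perspective-type function $(x,y)\mapsto y\,g(x/y)$ is jointly concave, but for the integral representation the $g(x)/x$ characterization is the efficient one.

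The concrete steps, in order, would be: (1) Establish analyticity of $g$: since operator concavity implies operator concavity in each fixed dimension and in particular ordinary concavity (hence continuity), and since the $2\times2$ and higher conditions force $g$ to be a Pick-type function, $g$ extends analytically; I would cite Löwner/Bhatia rather than reprove this. (2) Normalize: replace $g$ by $\tilde g(x) = g(x) - g(1) - g'(1)(x-1)$, so $\tilde g(1) = \tilde g'(1) = 0$ and $\tilde g$ is still operator concave, and $\tilde g(x) \le 0$ for all $x>0$ (as a concave function with a horizontal tangent at $x=1$). (3) Apply the known representation for the operator monotone function associated to $\tilde g$. Specifically, $-\tilde g$ is operator convex, nonnegative, vanishing to second order at $1$, so by the integral representation of operator convex functions on $(0,\infty)$ (again Bhatia, or Hansen–Pedersen) one has $-\tilde g(x) = \beta (x-1)^2 / x$-type terms plus $\int \frac{(x-1)^2}{x+t}\,d\nu(t)$ over $t\in[0,\infty)$ — and the vanishing of the linear and constant parts kills the affine piece. (4) Change variables $t \mapsto s$ via $s = 1/(1+t)$ (so $t\in[0,\infty]$ corresponds to $s\in[0,1]$) to rewrite $\frac{(x-1)^2}{x+t}$ in the form $\frac{(x-1)^2}{1+s(x-1)}$ up to a positive Jacobian factor absorbed into the measure, and collect the endpoint $t=\infty$ (i.e. $s=0$) as an atom contributing a multiple of $(x-1)^2$, which is of the stated form with $s=0$. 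This yields a finite positive measure $\mu$ on $[0,1]$, finiteness coming from evaluating the representation at a single point, e.g. $x=2$, where the integrand $\frac{1}{1+s}$ is bounded below by $1/2$ on $[0,1]$.

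The main obstacle I anticipate is bookkeeping rather than conceptual: getting the change of variables $s=1/(1+t)$ exactly right so that the representation for operator convex functions (which is usually quoted with kernel $\frac{(x-1)^2}{x+t}$, or equivalently with a measure on $[0,\infty)$ and a possible linear term) lands precisely on the kernel $\frac{(x-1)^2}{1+s(x-1)}$ with measure supported on the \emph{closed} interval $[0,1]$, correctly accounting for the boundary points $s=0$ and $s=1$ (the latter corresponding to $t=0$, i.e. the kernel $(x-1)^2/x$) and verifying that the Jacobian and any normalizing factors keep the measure positive and finite. A secondary subtlety is justifying that the domain of integration can be taken to be $[0,1]$ and not something larger: this uses that $\tilde g$ is defined and operator concave on \emph{all} of $(0,\infty)$, so the singularities of the rational kernels $x\mapsto \frac{(x-1)^2}{1+s(x-1)}$ must lie outside $(0,\infty)$, which forces $s\in[0,1]$. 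I would present the argument modularly, quoting the operator monotone/convex integral representation as a black box from the literature and devoting the written proof mainly to the normalization and the change of variables.
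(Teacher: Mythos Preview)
Your concrete steps (2)--(4) are sound: normalizing so that $\tilde g(1)=\tilde g'(1)=0$, citing a known integral representation for the operator convex function $-\tilde g$, and changing variables to land on the kernel $\frac{(x-1)^2}{1+s(x-1)}$ with $s\in[0,1]$ all work, and your finiteness check at $x=2$ is the right one. One caveat: the ``key structural fact'' you mention in the preamble, that $g$ is operator concave iff $x\mapsto g(x)/x$ is operator monotone decreasing, is false as stated (take $g(x)=\log x$: then $g(x)/x=(\log x)/x$ is not even scalar monotone on $(0,\infty)$). Fortunately your actual argument in step (3) does not use this; you instead invoke the integral representation for operator convex functions as a black box, which is legitimate.

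The paper's route is shorter and avoids the change of variables entirely. It uses a different bridge to operator monotonicity: $g$ is operator concave on $(0,\infty)$ if and only if the \emph{secant} function $h(x)=-\dfrac{g(x)-g(1)}{x-1}$ (with $h(1):=-g'(1)$) is operator monotone, a result quoted from Hiai's lecture notes. L{\"o}wner's theorem is then cited in the already-convenient form
\[
h(x)=h(1)+\int_0^1 \frac{x-1}{1+s(x-1)}\,d\mu(s),
\]
and multiplying through by $-(x-1)$ immediately yields~\eqref{eq:intrep} with no substitution and no separate handling of endpoint atoms. So both approaches ultimately lean on a cited representation theorem; the paper's choice of intermediate characterization (secant rather than $g(x)/x$ or a generic operator-convex representation) is what makes the bookkeeping you anticipated disappear. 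If you keep your version, drop the $g(x)/x$ remark and just cite the operator-convex representation directly.
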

The integrand can be shown to be operator concave for any $s \in [0,1]$ using the Schur complement lemma, since
\begin{equation}
	-(X-I) (I+s(X-I))^{-1}(X-I) \psd T \iff \begin{bmatrix} -T & X-I\\ X-I & I+s(X-I) \end{bmatrix} \psd 0.
\end{equation}
The theorem above tells us that any operator concave function is essentially an (infinite) conic combination of such functions, together with affine functions of $x$.

\paragraph{Noncommutative perspective} If $g:(0,\infty) \to \RR$, we define the noncommutative perspective of $g$ by
\begin{equation}
\label{eq:perspective2}
P_g(X,Y) = X^{1/2} g\left(X^{-1/2} Y X^{-1/2}\right) X^{1/2}
\end{equation}
for any $X,Y$ Hermitian positive definite matrices. This generalizes the perspective for scalar arguments $P_g(x,y) = xg(y/x)$. If $g$ is operator concave, then it has been shown \cite{effros2009matrix,ebadian2011perspectives} that $P_g$ is jointly operator concave in $(X,Y)$, in the sense that
\begin{equation}
\label{eq:supadd1}
P_g( \lambda_1 X_1 + \lambda_2 X_2, \lambda_1Y_1 + \lambda_2Y_2 ) \psd \lambda_1 P_g(X_1,Y_1) + \lambda_2 P_g(X_2,Y_2)
\end{equation}
for all $X_1,Y_1,X_2,Y_2$ positive definite, and $\lambda_1,\lambda_2 \geq 0$ such that $\lambda_1 + \lambda_2 = 1$. Since $P_g$ is $1$-homogeneous, i.e., $P_g(\lambda X, \lambda Y) = \lambda P_g(X,Y)$ for all $\lambda \geq 0$, concavity of $P_g$ is actually equivalent to sup-additivity:
\begin{equation}
\label{eq:supadd2}
P_g( X_1 +  X_2, Y_1 + Y_2 ) \psd  P_g(X_1,Y_1) +  P_g(X_2,Y_2).
\end{equation}
The \emph{transpose} of $g$ is defined by $\hat{g}(x) = xg(1/x)$. If $g$ is operator concave, then $\hat{g}$ is operator concave too, and we have, for all $X, Y \pd 0$
\begin{equation}
\label{eq:transposeid}
P_g(X,Y) = P_{\hat g}(Y,X),
\end{equation}
see e.g., \cite[Lemma 2.1]{hiai2017different}.

\begin{remark}[Domain of the matrix perspective]
\label{rem:domainpersp}
So far, we have restricted the domain of $P_g$ to pairs of positive definite matrices $(X,Y)$. Depending on the function $g$, the domain can be extended to ensure closedness of the hypograph of $P_g$. This is treated in detail in Appendix \ref{sec:domainpersp}. In summary, four cases need to be considered, depending on whether $g(0^+):=\lim_{x\to0} g(x)$ and $\hat{g}(0^+)=\lim_{x\to0} \hat{g}(x)$ are finite or infinite:
\begin{itemize}
\item If both $g(0^+)$ and $\hat{g}(0^+)$ are finite (such as $g(x) = x^{\alpha}$ for $\alpha \in [0,1]$), then $P_g$ can be extended to all pairs of positive semidefinite matrices.
\item If $g(0^+) = -\infty$ and $\hat{g}(0^+)$ is finite (such as $g(x) = \log x$ or $g(x) = -x^{\alpha}$ for $\alpha \in [-1,0)$), then $P_g$ can be extended to all pairs $(X,Y)$ such that $X \ll Y$, i.e., $\ker(Y) \subset \ker(X)$.
\item If $g(0^+)$ is finite and $\hat{g}(0^+) = -\infty$ (such as $g(x) = -x^{\alpha}$ for $\alpha \in (1,2]$), then $P_g$ can be extended to all pairs $(X,Y)$ such that $Y \ll X$.
\item Finally if both $g(0^+)$ and $\hat g(0^+)$ are infinite (such as $g(x)=-x^\alpha - x^{1-\alpha}$ for $\alpha\in (1,2]$), then $P_g$ can be extended to all pairs $(X,Y)$ such that $\ker(X) = \ker(Y)$.
\end{itemize}
\end{remark}

\paragraph{Kronecker products} The Kronecker product of two  matrices $X \in \CC^{m_1\times n_1}$ and $Y \in \CC^{m_2 \times n_2}$ is denoted $X\otimes Y \in \CC^{m_1 m_2 \times n_1 n_2}$ and defined by
\[
(X \otimes Y)_{i_1 i_2, j_1j_2} = X_{i_1 j_1} Y_{i_2 j_2}, \quad 1\leq i_1 \leq m_1, \; 1\leq i_2 \leq m_2,  \; 1\leq j_1 \leq n_1,  \; 1\leq j_2 \leq n_2.
\]
If $X$ and $Y$ are Hermitian, with eigenvalue decompositions $X = U_1 \Lambda_1 U_1^*$ and $Y=U_2 \Lambda_2 U_2^*$, where $U_1,U_2$ are unitaries and $\Lambda_1,\Lambda_2$ diagonal, then $X\otimes Y$ is Hermitian with eigenvalue decomposition
\[
X\otimes Y = (U_1 \otimes U_2) (\Lambda_1 \otimes \Lambda_2) (U_1 \otimes U_2)^*.
\]
As such if $X,Y \psd 0$, then $X\otimes Y \psd 0$, and if $X,Y \pd 0$ then $X\otimes Y \pd 0$. In this case we get for any $\alpha \in \RR$, $(X\otimes Y)^{\alpha} = X^{\alpha} \otimes Y^{\alpha}$ and $\log(X\otimes Y) = \log(X) \otimes I + I\otimes \log(Y)$. 

If we let $\e \in \CC^{n^2}$ be defined by $\e_{i_1i_2} = 1$ if $i_1=i_2$ and $0$ otherwise (i.e., $\e$ is obtained from the $n\times n$ identity matrix by stacking the columns in an $n^2$-vector), then for any $X,Y \in \CC^{n\times n}$ we have
\begin{equation}
\label{eq:eXYe}
\e^*(X\otimes Y) \e = \sum_{1\leq i,j \leq n} X_{ij} Y_{ij}.
\end{equation}
We define the linear map $\p:\H^{n^2}\to \RR$ by
\begin{equation}
\label{eq:pZ}
\p(Z) = \psi^* Z \psi
\end{equation}
and note the following two important properties which will be useful for rest of the paper: $\p(Z) \geq 0$ for any $Z \in \H^{n^2}_+$ and for any $X,Y \in \H^n$, we have
\begin{equation}
\label{eq:propp}
	\p(X\otimes \bar Y) = \sum_{1\leq i,j\leq n} X_{ij}\bar{Y}_{ij} = \sum_{1\leq i,j\leq n} X_{ij} Y_{ji} = \tr(XY).
\end{equation}


\section{Proofs}
\label{sec:pfs}

In this section we prove Theorems \ref{thm:main1} and~\ref{thm:main2}, as well as Corollaries~\ref{cor:DBS},~\ref{cor:R},~\ref{cor:qre}, and~\ref{cor:Qalpha}. The strategy of the argument is summarized in Section~\ref{sec:nesterov} and the key technical conditions that need to be checked for the construction of the barriers are in Section~\ref{sec:compat}. The proofs of Theorems~\ref{thm:main1}
and~\ref{thm:main2} also appear in Section~\ref{sec:compat}, with the exception of the proofs of the lower bounds on the barrier parameters. These appear in Section~\ref{sec:lb}, which is devoted to establishing
tight lower bounds on the barrier parameters for the self-concordant barriers we construct.
Finally, in Section~\ref{sec:pfassembly}, we show how to specialize Theorems~\ref{thm:main1} and~\ref{thm:main2} to establish
Corollaries~\ref{cor:DBS},~\ref{cor:R},~\ref{cor:qre}, and~\ref{cor:Qalpha}.

\subsection{The compatibility condition of Nesterov and Nemirovski}
\label{sec:nesterov}
Our arguments make crucial use of  general methods, due to Nesterov and Nemirovski~\cite[Proposition 5.1.7]{nesterovnemirovski}, to construct self-concordant barriers for
the hypographs of functions that satisfy the following generalized concavity property.
\begin{definition}
	Let $E$ be a finite-dimensional real vector space. A function $\xi:\dom\xi \subset E\to \S^m$ is \emph{$\S^m_+$-concave} if
\[
	\xi(\lambda x + (1-\lambda)y) \psd \lambda \xi(x) + (1-\lambda) \xi(y)
\]
	for all $\lambda\in [0,1]$ and $x,y\in \dom\xi$. 
\end{definition}
Note that if $\dom \xi$ is open, and $\xi:\dom\xi\subset E \to\S^m$ is $C^2$ on its domain, then $\xi$ is $\S_+^m$-concave if and only if $D^2\xi(x)[h] \nsd 0$ for all $x\in \dom \xi$ and all $h\in E$. 
The \emph{hypograph} of an $\S_+^m$-concave
function $\xi$ is the convex set 
\begin{equation}
\label{eq:hypo}
\hypo(\xi) = \left\{ (x,z) \in \dom \xi \times \S^m : \xi(x) \psd z \right\}.
\end{equation}
The following  compatibility condition due to Nesterov and Nemirovski provides a way to  establish self-concordance for a natural barrier function associated to the hypograph of a $\S_+^m$-concave function $\xi$.
\begin{definition}
Let $E$ be a finite-dimensional real vector space, and let $\xi:\dom \xi \subset E \to \S^m$ 
	be a $C^3$, $\S^m_+$-concave function defined on an open domain $\dom \xi \subset E$.
	Let $F$ be a self-concordant barrier for $\cl \dom \xi$. We say that $\xi$ is \emph{$\beta$-compatible with $F$}, if there is a constant $\beta \geq 1$ such that 
\begin{equation}
\label{eq:compat}
D^3 \xi(x)[h] \nsd 3 \beta (D^2 F(x)[h])^{1/2} (-D^2 \xi(x)[h])
\end{equation}
for all $x \in \dom(\xi)$ and all $h\in E$.
\end{definition}
Crucial to our arguments is the following special case of~\cite[Theorem 5.4.4]{nesterov2018lectures} (closely related to~\cite[Proposition 5.1.7]{nesterovnemirovski}).
\begin{theorem}[{\cite[Theorem 5.4.4]{nesterov2018lectures}}]
	\label{thm:nesterov}
Let $E$ be a finite-dimensional real vector space, and let $\xi:\dom \xi \subset E \to \S^m$ be a $C^3$ 
	$\S^m_+$-concave function defined on an open domain $\dom \xi \subset E$.
If $F$ is a self-concordant barrier for $\cl \dom \xi$ with parameter $\nu$, and $\xi$ is $\beta$-compatible with $F$,
then $(x,z)\mapsto -\log \det(\xi(x) - z) + \beta^3 F(x)$ is a self-concordant barrier for $\cl \hypo(\xi)$ with parameter $m+\beta^3 \nu$.
\end{theorem}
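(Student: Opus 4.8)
The plan is to set
\[
\Phi(x,z) \;:=\; -\log\det(\xi(x)-z) + \beta^3 F(x),
\]
with domain $\dom\Phi := \{(x,z)\in(\dom\xi)\times\S^m : \xi(x)\pd z\}$, and to verify three things in turn: (i) the self-concordance inequality \eqref{eq:scdef} for $\Phi$; (ii) that $\cl\dom\Phi = \cl\hypo(\xi)$ and that $\epi(\Phi)$ is closed, so $\Phi$ is a barrier for $\cl\hypo(\xi)$; and (iii) the $\nu$-barrier inequality \eqref{eq:nu-def} for $\Phi$ with parameter $m+\beta^3\nu$. Fix $(x,z)\in\dom\Phi$ and a direction $(h,\zeta)\in E\times\S^m$, and write $S:=\xi(x)-z\pd 0$ and $\sigma := D^2 F(x)[h]\ge 0$. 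Differentiating $\Phi$ along $(h,\zeta)$ and introducing the $S^{-1/2}$-conjugated derivatives of $\xi$, namely $G_1 := S^{-1/2}(D\xi(x)[h]-\zeta)S^{-1/2}$, $G_2 := S^{-1/2}(D^2\xi(x)[h])S^{-1/2}$ (which satisfies $G_2\nsd 0$ because $\xi$ is $\S^m_+$-concave), and $G_3 := S^{-1/2}(D^3\xi(x)[h])S^{-1/2}$, a direct computation (legitimate since $\xi,F$ are $C^3$ and $-\log\det$ is analytic on $\S^m_{++}$) gives
\[
D^2\Phi(x,z)[h,\zeta] = \norm{G_1}_F^2 + \tr(-G_2) + \beta^3\sigma, \qquad D^3\Phi(x,z)[h,\zeta] = -2\tr(G_1^3) + 3\tr(G_1G_2) - \tr(G_3) + \beta^3 D^3 F(x)[h].
\]
Abbreviating $a := \norm{G_1}_F^2$, $b := \tr(-G_2)$, $c := \beta^3\sigma$, all nonnegative, we have $D^2\Phi(x,z)[h,\zeta] = a+b+c\ge 0$, so $\Phi$ is convex.

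The heart of the argument is to bound $|D^3\Phi(x,z)[h,\zeta]|$ by $2(a+b+c)^{3/2}$, and I would do this term by term. First, $|\tr(G_1^3)|\le\norm{G_1}_F^3 = a^{3/2}$, and, by Cauchy--Schwarz together with $\norm{G_2}_F\le\tr(-G_2)$ (valid since $-G_2\psd 0$), $|\tr(G_1G_2)|\le\norm{G_1}_F\,\tr(-G_2) = a^{1/2}b$. Second, applying the compatibility inequality \eqref{eq:compat} to both $h$ and $-h$---and using that $h\mapsto D^2 F(x)[h]$ and $h\mapsto D^2\xi(x)[h]$ are even while $h\mapsto D^3\xi(x)[h]$ is odd---gives $-3\beta\sigma^{1/2}(-D^2\xi(x)[h])\nsd D^3\xi(x)[h]\nsd 3\beta\sigma^{1/2}(-D^2\xi(x)[h])$; conjugating by $S^{-1/2}$ and taking traces yields $|\tr(G_3)|\le 3\beta\sigma^{1/2}\tr(-G_2) = 3\beta\sigma^{1/2}b$. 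Third, self-concordance of $F$ gives $|D^3 F(x)[h]|\le 2\sigma^{3/2}$. Fourth, since $\beta\ge 1$, we have $3\beta\sigma^{1/2}b = 3\beta^{-1/2}c^{1/2}b\le 3c^{1/2}b$ and $2\beta^3\sigma^{3/2} = 2\beta^{-3/2}c^{3/2}\le 2c^{3/2}$. Putting the four bounds together,
\[
|D^3\Phi(x,z)[h,\zeta]| \;\le\; 2a^{3/2} + 3b\bigl(\sqrt a + \sqrt c\bigr) + 2c^{3/2},
\]
so self-concordance reduces to the elementary inequality $2a^{3/2} + 3b(\sqrt a+\sqrt c) + 2c^{3/2}\le 2(a+b+c)^{3/2}$ for all $a,b,c\ge 0$. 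Writing $a = u^2$, $c = v^2$ with $u,v\ge 0$, this becomes $2u^3 + 3b(u+v) + 2v^3\le 2(u^2+v^2+b)^{3/2}$; the right-hand side minus the left-hand side, viewed as a function of $b\ge 0$, is convex, its derivative $3(u^2+v^2+b)^{1/2} - 3(u+v)$ vanishes only at $b = 2uv$, and there the difference equals $2(u+v)^3 - 2u^3 - 2v^3 - 6uv(u+v) = 0$; hence the difference is nonnegative throughout, proving the inequality. This establishes (i).

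For the barrier parameter (iii), I would compute $2D\Phi(x,z)[h,\zeta] - D^2\Phi(x,z)[h,\zeta] = \bigl(-2\tr(G_1)-\norm{G_1}_F^2\bigr) + \tr(G_2) + \beta^3\bigl(2D F(x)[h] - D^2 F(x)[h]\bigr)$. Diagonalizing $G_1$, the first bracket equals $\sum_i\bigl(1 - (1+\lambda_i(G_1))^2\bigr)\le m$; the middle term $\tr(G_2) = -b\le 0$; and the last term is $\le\beta^3\nu$ since $F$ is a $\nu$-barrier. Hence $2D\Phi - D^2\Phi\le m+\beta^3\nu$ for every direction (and, since $\zeta$ varies freely while not affecting $F$, the value $m+\beta^3\nu$ is attained, so the parameter is sharp). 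For (ii): $\dom\Phi$ is open; one has $\dom\Phi\subseteq\hypo(\xi)\subseteq\cl\dom\Phi$, where the last inclusion holds because $(x,z-\eps I)\in\dom\Phi$ for all $\eps > 0$, so $\cl\dom\Phi = \cl\hypo(\xi)$; and $\Phi\to+\infty$ along any sequence in $\dom\Phi$ tending to a point of $\partial\dom\Phi$---either $\xi(x)-z$ becomes singular, in which case $-\log\det(\xi(x)-z)\to+\infty$ while $F$ stays finite, or $x$ tends to a finite boundary point of $\dom\xi$, in which case $F(x)\to+\infty$ while $-\log\det(\xi(x)-z)$ stays bounded below, because $\xi(x)-z\pd 0$ forces $0 < \tr(\xi(x)-z) = \tr\xi(x) - \tr z$ to stay bounded above (each diagonal entry of $\xi$ in a fixed basis is a scalar concave function, hence bounded above near a finite boundary point), whence $\det(\xi(x)-z)\le (\tr(\xi(x)-z)/m)^m$ is bounded above. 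This blow-up, with openness of $\dom\Phi$, gives closedness of $\epi(\Phi)$, and (i)--(iii) together prove the theorem.

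I expect the main obstacle to be step (i), and within it the final scalar inequality $2a^{3/2} + 3b(\sqrt a+\sqrt c) + 2c^{3/2}\le 2(a+b+c)^{3/2}$ holding with the exact constant $2$: the four matrix estimates must be arranged so that the contributions of orders $a$ (the cubic term in $G_1$), $b$ (the cross term and the compatibility bound) and $c$ (self-concordance of $F$) recombine precisely into $2(a+b+c)^{3/2}$, and making the numerical constants---and the use of $\beta\ge 1$---line up exactly is the delicate point; everything else is routine bookkeeping.
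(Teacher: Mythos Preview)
Your argument is correct. It differs from the paper's treatment in an important way: the paper does not prove this result from scratch but simply records how to instantiate the parameters in Nesterov's general barrier-calculus theorem \cite[Theorem~5.4.4]{nesterov2018lectures}; specifically, it sets $E_1=E$, $E_2=E_3=\S^m$, $K=\S^m_+$, $Q_2=\{(y,z):y\psd z\}$ with barrier $\Phi(y,z)=-\log\det(y-z)$, checks the recession condition, and observes that $\cl\hypo(\xi)$ coincides with the set produced by Nesterov's construction. Your route, by contrast, is a direct verification: you compute $D^2\Phi$ and $D^3\Phi$ in terms of $G_1,G_2,G_3$, bound each contribution (the $-\log\det$ cubic, the $G_1G_2$ cross term, the compatibility bound on $G_3$, and the self-concordance of $F$), and then reduce everything to the sharp scalar inequality $2a^{3/2}+3b(\sqrt a+\sqrt c)+2c^{3/2}\le 2(a+b+c)^{3/2}$, which you prove by a clean convexity-in-$b$ argument with equality at $b=2\sqrt{ac}$. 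The barrier-parameter computation and the closure/blow-up argument are likewise handled directly. What your approach buys is self-containment---no appeal to an external black box---and it makes transparent exactly where $\beta\ge 1$ and the compatibility constant $3$ are used; what the paper's approach buys is brevity and modularity, since Nesterov's theorem already packages the derivative bookkeeping and the scalar inequality. Both are complete proofs of the same statement.
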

\begin{proof}
In the notations of~\cite[Theorem 5.4.4]{nesterov2018lectures}, we take $E_1 = E$, $E_2 = E_3 = \H^m$, $K = \H^m_+ \subset E_2$, $Q_2 = \{(y,z) \in \H^m \times \H^m : y \psd z\}$ and $\Phi(y,z) = -\log\det(y-z)$, a $m$-self-concordant barrier for $Q_2$. It is clear that any element of  $K\times \{0\}$ is a recession direction for $Q_2$. Finally we note that
\[
\cl \hypo(\xi) = \cl \{(x,z) \in \dom \xi \times \H^m : \exists y, \; \xi(x) \psd y, \; (y,z) \in Q_2\}.
\]
\end{proof}
The usefulness of the theorem above lies in the remarkable properties of the compatibility condition \eqref{eq:compat}.
First, note that \eqref{eq:compat} is \emph{linear} in $\xi$. Therefore, if $(\xi_s)$ is a family of functions defined on the same domain which are all $\beta$-compatible with $F$ (a self-concordant barrier for the closure of their domain), then for any appropriate positive measure $\mu$, the function $\xi(x) = \int \xi_s(x) d\mu(s)$ is also $\beta$-compatible with $F$, as long as we can exchange the order of integration and differentiation. The theorem then gives a self-concordant barrier for the hypograph of $\xi(x)$. This fact, together with the integral representation of operator concave functions (Theorem~\ref{thm:intrep}), allows us to focus
on establishing the compatibility condition for functions of the form
\begin{equation}
	\label{eq:xis}
\xi_s(X,Y) = -(Y-X)(X+s(Y-X))^{-1}(Y-X),
\end{equation}
where $s\geq 0$. Note that $\xi_s(X,Y)$ is exactly the perspective of $x\mapsto -\frac{(x-1)^2}{1+s(x-1)}$.

Second, it can be shown that the compatibility condition behaves well under composition with positive linear maps. This is shown in the next proposition, a special case of~\cite[Proposition 5.1.9]{nesterovnemirovski}.
\begin{proposition}
	\label{prop:compat-pos}
	Let $E$ be a finite-dimensional real vector space, and let $\xi:\dom \xi \subset E \to \S^m$ be a $C^3$ 
	$\S^m_+$-concave function defined on an open domain $\dom \xi \subset E$.
	Let $F$ be a self-concordant barrier for $\cl \dom \xi$ such that $\xi$ is $\beta$-compatible with $F$.  If $\phi:\S^m\rightarrow \S^k$ is a positive linear map then $\phi\circ \xi : \dom \xi \to \S^k$ 
	is $\S_+^k$-concave and is $\beta$-compatible with $F$.
\end{proposition}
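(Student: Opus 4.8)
The plan is to use only two facts about $\phi$: that it is linear, so it commutes with differentiation, and that it is positive, so it is monotone for the Loewner order. Everything then reduces to applying $\phi$ to the inequalities that already hold for $\xi$.

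First I would record the preliminary observation that a positive linear map $\phi:\S^m\to\S^k$ is \emph{monotone}: if $A\nsd B$ then $B-A\psd 0$, hence $\phi(B)-\phi(A)=\phi(B-A)\psd 0$, i.e.\ $\phi(A)\nsd\phi(B)$. In particular $\phi$ maps Hermitian matrices to Hermitian matrices (each Hermitian matrix being a difference of two positive semidefinite ones), so $\phi\circ\xi$ does take values in $\S^k$; and since $\phi$ is linear (hence $C^\infty$ with $D\phi(W)[V]=\phi(V)$), the chain rule gives, for every $j\ge 1$, $x\in\dom\xi$ and $h\in E$,
\[
D^j(\phi\circ\xi)(x)[h] = \phi\bigl(D^j\xi(x)[h]\bigr).
\]
In particular $\phi\circ\xi$ is $C^3$ on the open set $\dom(\phi\circ\xi)=\dom\xi$, whose closure is still $\cl\dom\xi$, so $F$ remains a self-concordant barrier for the closure of the domain of $\phi\circ\xi$.

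Given these observations the two claims are immediate. For $\S^k_+$-concavity: fix $x\in\dom\xi$ and $h\in E$; $\S^m_+$-concavity of the $C^2$ map $\xi$ means $D^2\xi(x)[h]\nsd 0$, and applying the monotone map $\phi$ gives $D^2(\phi\circ\xi)(x)[h]=\phi(D^2\xi(x)[h])\nsd 0$, which is equivalent to $\S^k_+$-concavity of $\phi\circ\xi$. For $\beta$-compatibility with $F$: note that $D^2F(x)[h]\ge 0$ since $F$ is convex, so $(D^2F(x)[h])^{1/2}$ is a nonnegative scalar that commutes with $\phi$; applying the monotone map $\phi$ to inequality~\eqref{eq:compat} for $\xi$ yields
\[
\phi\bigl(D^3\xi(x)[h]\bigr)\nsd 3\beta\,(D^2F(x)[h])^{1/2}\,\phi\bigl(-D^2\xi(x)[h]\bigr),
\]
and using the chain-rule identity on both sides this is precisely \eqref{eq:compat} for $\phi\circ\xi$ with the same constant $\beta$, valid for all $x\in\dom\xi$ and all $h\in E$.

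There is essentially no obstacle here: the statement is a soft consequence of linearity plus positivity of $\phi$. The only points worth stating carefully are the (routine) justification that $\phi$ passes through the directional derivatives and the remark that a positive linear map is automatically monotone for the semidefinite order. I note in passing that one need not worry about the one-sided form of~\eqref{eq:compat}: since its two sides have opposite parity in $h$ (odd on the left, even on the right), the inequality stated for all $h$ already encodes the symmetric bound, and in any case monotonicity of $\phi$ transports it for all $h$ simultaneously.
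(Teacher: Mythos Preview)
Your proof is correct and follows essentially the same approach as the paper: commute $\phi$ through the directional derivatives and apply positivity/monotonicity of $\phi$ to the concavity inequality and to~\eqref{eq:compat}. You have simply been more explicit about why positive linear maps are monotone and why the scalar $(D^2F(x)[h])^{1/2}$ passes through $\phi$, but the argument is the same.
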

\begin{proof}
	First, we note that $D^k(\phi \circ \xi)(x)[h] = \phi(D^k \xi(x)[h])$ for all $x\in \dom\xi$, all $h\in E$, and all $k\geq 0$. 
	To see that $\phi\circ \xi$ is $\S_+^k$-concave, 
	we observe that $D^2(\phi\circ \xi)(x)[h] = \phi(D^2\xi(x)[h]) \nsd 0$ for all $x\in \dom\xi$ and all $h\in E$, because $\phi$
	is positive and $\xi$ is $\S_+^m$-concave. To see that $\phi\circ \xi$ is $1$-compatible with $F$, we simply apply 
	$\phi$ to both sides of the compatibility condition for $\xi$~\eqref{eq:compat} to give the compatibility condition for $\phi\circ \xi$.
\end{proof}

We now show how the required compatibility condition for the main classes of functions we consider in this paper
follow from the compatibility condition for the functions $\xi_s$ for $s\geq 0$ defined in \eqref{eq:xis}.
\begin{proposition}
	\label{prop:conic}
	Let $F_{n_1,n_2}:\S^{n_1}_{++} \times \S^{n_2}_{++} \to \RR$, $F_{n_1,n_2}(X,Y) = -\log\det(X) - \log\det(Y)$, which is a self-concordant barrier for $\S_+^{n_1}\times \S_+^{n_2}$ for any $n_1,n_2 \geq 1$.
\begin{itemize}
	\item If $(X,Y) \in \S^{n}_{++} \times \S^{n}_{++} \mapsto \xi_s(X,Y)$ is $1$-compatible with $F_{n,n}$ for all $s \in [0,1]$ then $(X,Y)\mapsto \phi(P_g(X,Y))$ is $1$-compatible with $F_{n,n}$ for all operator concave functions $g:(0,\infty)\to\RR$ and all positive maps $\phi:\S^n\rightarrow \S^m$.
	\item If $(X,Y) \in \S^{n_1}_{++} \times \S^{n_2}_{++}\mapsto \xi_s(X\otimes I,I\otimes \bar Y)$ is $1$-compatible with $F_{n_1,n_2}$ for all $s \in [0,1]$ then $(X,Y)\mapsto \phi(P_g(X\otimes I,I\otimes \bar Y))$ is $1$-compatible with $F_{n_1,n_2}$ for all operator concave functions $g:(0,\infty)\to\RR$ and all positive
		maps $\phi:\S^{n_1 n_2}\rightarrow \S^m$.
\end{itemize}
\end{proposition}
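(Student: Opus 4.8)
The plan is to reduce both assertions to the integral representation of operator concave functions, Theorem~\ref{thm:intrep}, together with the two structural features of the compatibility condition that were already emphasized: that~\eqref{eq:compat} is \emph{linear} in $\xi$, hence stable under integration against a positive measure, and that it is preserved under composition with a positive linear map, Proposition~\ref{prop:compat-pos}.

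First I would record two elementary observations. For an affine function $g(x)=a+bx$ the noncommutative perspective is $P_g(X,Y)=X^{1/2}(aI+bX^{-1/2}YX^{-1/2})X^{1/2}=aX+bY$, which is linear in $(X,Y)$; and, as already noted, $\xi_s=P_{g_s}$ with $g_s(x)=-\tfrac{(x-1)^2}{1+s(x-1)}$. Since $g\mapsto P_g(X,Y)$ is linear (the map $g\mapsto g(W)$ is linear for a fixed Hermitian $W$, and conjugation by $X^{1/2}$ is linear), applying $P$ termwise to~\eqref{eq:intrep} gives, for the finite positive measure $\mu$ on $[0,1]$ furnished there,
\[
P_g(X,Y)=(g(1)-g'(1))\,X+g'(1)\,Y+\int_0^1\xi_s(X,Y)\,d\mu(s),
\]
and, since $(X,Y)\mapsto(X\otimes I,\,I\otimes\bar Y)$ is real-linear (entrywise complex conjugation is real-linear), substituting $X\otimes I$ for $X$ and $I\otimes\bar Y$ for $Y$ yields
\[
P_g(X\otimes I,\,I\otimes\bar Y)=(g(1)-g'(1))(X\otimes I)+g'(1)(I\otimes\bar Y)+\int_0^1\xi_s(X\otimes I,\,I\otimes\bar Y)\,d\mu(s).
\]

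For the first bullet, the terms affine in $(X,Y)$ have vanishing $D^2$ and $D^3$, so the second and third directional derivatives of $P_g$ equal those of $\Xi:=\int_0^1\xi_s\,d\mu(s)$. By hypothesis each $\xi_s$ is $1$-compatible with $F_{n,n}$, which in particular forces $\xi_s$ to be $\S^n_+$-concave and to satisfy~\eqref{eq:compat} with $\beta=1$; integrating $D^2\xi_s\nsd 0$ and~\eqref{eq:compat} against $\mu\geq0$ (the nonnegative scalar $(D^2F_{n,n}(X,Y)[h])^{1/2}$ being independent of $s$, it factors out of the integral) and interchanging $\int(\cdot)\,d\mu(s)$ with $D^2,D^3$ shows that $\Xi$, and hence $P_g$, is $\S^n_+$-concave and $1$-compatible with $F_{n,n}$. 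Proposition~\ref{prop:compat-pos} applied to the positive map $\phi$ then shows $\phi\circ P_g$ is $\S^m_+$-concave and $1$-compatible with $F_{n,n}$. The second bullet is proved identically from the second displayed identity: the terms affine in $(X,Y)$ again drop out of $D^2$ and $D^3$, the hypothesis now being $1$-compatibility of $(X,Y)\mapsto\xi_s(X\otimes I,\,I\otimes\bar Y)$ with $F_{n_1,n_2}$ for each $s$, and one concludes with Proposition~\ref{prop:compat-pos}.

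The only genuinely technical point, and the step I expect to demand the most care (though it is routine), is justifying the interchange of $\int_0^1(\cdot)\,d\mu(s)$ with the differential operators $D^2,D^3$ — equivalently, that $\Xi$ is $C^3$ on its domain. On any compact subset of $\S^n_{++}\times\S^n_{++}$ the matrix $X+s(Y-X)=(1-s)X+sY$ is positive definite uniformly in $s\in[0,1]$, so $\xi_s$ is a matrix of rational functions of the entries of $(X,Y)$ whose denominator is bounded away from $0$ uniformly in $s$; hence $\xi_s$ together with all its partials up to order three is bounded uniformly in $s\in[0,1]$ over compacta, and as $\mu$ is finite, differentiation under the integral sign is legitimate by dominated convergence, with $\Xi$ (hence $P_g$) even analytic, in accordance with Theorem~\ref{thm:intrep}. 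The tensor case is identical, $(1-s)(X\otimes I)+s(I\otimes\bar Y)$ being positive definite uniformly in $s\in[0,1]$ over compact subsets of $\S^{n_1}_{++}\times\S^{n_2}_{++}$.
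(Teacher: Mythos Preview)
Your proof is correct and follows essentially the same approach as the paper: use the integral representation~\eqref{eq:intrep} to write $P_g$ as an affine part plus $\int_0^1\xi_s\,d\mu(s)$, integrate the compatibility inequality~\eqref{eq:compat} against $\mu$ (exploiting that the $F$-dependent scalar is independent of $s$), and finish with Proposition~\ref{prop:compat-pos}. The paper packages the interchange of differentiation and integration into a separate appendix result (Theorem~\ref{thm:intrepdirectionalderiv}), but the underlying argument there is exactly the uniform-boundedness-on-compacta plus dominated convergence justification you sketch.
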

\begin{proof}
	It is clear from the definition that the set of functions with 
	domain $\S_{++}^{n_1}\times \S_{++}^{n_2}$ that are $1$-compatible 
	with $F_{n_1,n_2}$ is closed under taking conic combinations and 
	that any linear function is $1$-compatible with $F_{n_1,n_2}$. 
	
	Since $g$ is operator concave, it has an integral representation as in Theorem~\ref{thm:intrep}. Then, for any $X,Y \pd 0$, 
	$P_g(X,Y)$ can be expressed as
	\begin{equation}
		\label{eq:intP}
	\begin{aligned}
		P_g(X,Y) &= g(1) X + g'(1)(Y-X) + \int_{0}^{1} \xi_s(X,Y) d\mu(s).
	\end{aligned}
	\end{equation}
	The first two terms in \eqref{eq:intP} are linear and hence are 1-compatible with $F_{n,n}$.
	The integrand is $1$-compatible with $F_{n,n}$ for all $s \in [0,1]$ by assumption, i.e.,
	\[
	D^3 \xi_s(X,Y)[H,V] \nsd 3 \beta (D^2 F_{n,n}(X,Y)[H,V])^{1/2} (-D^2 \xi_s(X,Y)[H,V])
	\]
	for all $X,Y \pd 0$, and $H,V \in \H^n$.
	By integrating this matrix inequality with respect to the measure $\mu$, and using Theorem \ref{thm:intrepdirectionalderiv} from Appendix \ref{sec:intrepoperatorconcave} which shows that the directional derivatives of $P_g$ are obtained by integrating the directional derivatives of the $\xi_s$, we get that
	\[
	D^3 P_g(X,Y)[H,V] \nsd 3 \beta (D^2 F_{n,n}(X,Y)[H,V])^{1/2} (-D^2 P_g(X,Y)[H,V]),
	\]
	i.e., that $P_g$ is 1-compatible with $F_{n,n}$.
	 The same is true for $\phi(P_g(X,Y))$ using Proposition~\ref{prop:compat-pos}, since $\phi$ is a positive linear map.

The same proof applies to show the 1-compatibility of $(X,Y) \mapsto \phi(P_g(X\otimes I, I \otimes \bar Y))$ with $F_{n_1,n_2}$, assuming the 1-compatibility of $(X,Y) \mapsto \xi_s(X\otimes I, I \otimes \bar Y)$ for all $s \in [0,1]$ (see Remark \ref{rem:intrepdirectionalderivtensor}).
\end{proof}

\subsection{Compatibility results and proofs of Theorems~\ref{thm:main1} and~\ref{thm:main2}}
\label{sec:compat}
We begin with a simple technical result. For $A\in \S^n$, let $\|A\|$ denote the spectral norm. 
\begin{lemma}
 	\label{lem:eta-bnd}
 	Let $X,Y\in \S_{++}^n$ and let $H,V\in \S^n$ be fixed matrices. Let $\eta = \max\{\|X^{-1/2}HX^{-1/2}\|,$ $\|Y^{-1/2}VY^{-1/2}\|\}$. Then 
 	\[\|((1-s)X+sY)^{-1/2}((1-s)H+sV)((1-s)X+sY)^{-1/2}\| \leq\eta\]
 	for all $s\in [0,1]$.
 \end{lemma}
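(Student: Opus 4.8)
The plan is to reduce the spectral-norm bound to a pair of semidefinite inequalities that behave well under convex combinations. First I would record the elementary equivalence that, for $A\in\S_{++}^n$ and a Hermitian $B\in\S^n$, one has $\|A^{-1/2}BA^{-1/2}\|\leq\eta$ if and only if $-\eta A\psd B\psd \eta A$; this is just conjugating the operator inequality $-\eta I\psd A^{-1/2}BA^{-1/2}\psd\eta I$ by $A^{1/2}$, and conversely conjugating $-\eta A\psd B\psd\eta A$ by $A^{-1/2}$.

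Applying this equivalence to the two hypotheses, $\|X^{-1/2}HX^{-1/2}\|\leq\eta$ gives $-\eta X\psd H\psd\eta X$, and similarly $-\eta Y\psd V\psd\eta Y$. Since the positive semidefinite order is preserved under nonnegative scalar multiples and under sums, I would multiply the first chain of inequalities by $1-s\geq 0$, the second by $s\geq 0$, and add them to obtain
\[
-\eta\bigl((1-s)X+sY\bigr)\psd (1-s)H+sV\psd \eta\bigl((1-s)X+sY\bigr).
\]
Writing $X_s:=(1-s)X+sY$, which is positive definite for every $s\in[0,1]$ as a convex combination of positive definite matrices, this says precisely $-\eta X_s\psd (1-s)H+sV\psd \eta X_s$. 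Conjugating back by $X_s^{-1/2}$ and invoking the equivalence once more yields $\|X_s^{-1/2}((1-s)H+sV)X_s^{-1/2}\|\leq\eta$, which is the assertion of the lemma.

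There is essentially no obstacle in this argument; the only points to keep an eye on are that $X_s\pd 0$ for all $s\in[0,1]$ (so that $X_s^{-1/2}$ is well defined) and that the coefficients $1-s$ and $s$ are nonnegative, which is what allows the semidefinite inequalities to be combined. Everything else is the routine ``sandwich'' characterization of the spectral norm relative to a positive definite matrix.
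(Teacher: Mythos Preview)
Your proof is correct and follows exactly the same approach as the paper: both arguments translate the spectral-norm bounds into the sandwich inequalities $-\eta X\psd H\psd\eta X$ and $-\eta Y\psd V\psd\eta Y$, take the convex combination with weights $1-s$ and $s$, and then conjugate by $((1-s)X+sY)^{-1/2}$. The only difference is that you spell out the equivalence $\|A^{-1/2}BA^{-1/2}\|\leq\eta \iff -\eta A\psd B\psd\eta A$ a bit more explicitly.
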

 \begin{proof}
 	From the definition of $\eta$ we know that $-\eta I \nsd X^{-1/2}HX^{-1/2} \nsd \eta I$ and $-\eta I \nsd Y^{-1/2}VY^{-1/2} \nsd \eta I$. Therefore
 	$-\eta X \nsd H \nsd \eta X$ and $-\eta Y \nsd V \nsd \eta Y$. Since $s\geq 0$ and $1-s\geq 0$, taking 
 	the appropriate convex combination of these inequalities gives $-\eta((1-s)X+sY) \nsd (1-s)H+sV \nsd \eta((1-s)X+sY)$. The result follows by multiplying on both sides by $(sY+(1-s)X)^{-1/2}$, which exists because $sY+(1-s)X$ is positive definite.
 \end{proof}

Our main lemma for the proof of Theorem \ref{thm:main1} is the following.
\begin{lemma}
	\label{lem:compat1}
	For any $s \geq 0$, let $\xi_s(X,Y) = -(Y-X)(sY+(1-s)X)^{-1}(Y-X)$ be defined on the open set $\S^n_{++} \times \S^n_{++}$. 
	Then $\xi_s$ is 1-compatible with $F(X,Y) = -\log \det X-\log \det Y$.
\end{lemma}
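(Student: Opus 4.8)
The plan is to reduce the problem to a scalar computation by simultaneously diagonalizing the relevant pencil and exploiting the $1$-homogeneity of $\xi_s$. First I would compute the directional derivatives $D^2\xi_s(X,Y)[H,V]$ and $D^3\xi_s(X,Y)[H,V]$ explicitly. Writing $W = sY+(1-s)X$ and $K = Y-X$, and noting the derivative of $W$ in direction $(H,V)$ is $\Delta := sV+(1-s)H$ and the derivative of $K$ is $G := V-H$, differentiating $\xi_s = -KW^{-1}K$ gives, after collecting terms,
\begin{equation*}
D^2\xi_s[H,V] = -2(G - KW^{-1}\Delta)^* W^{-1} (G - KW^{-1}\Delta) \nsd 0,
\end{equation*}
which re-confirms $\S^n_+$-concavity, and $D^3\xi_s[H,V]$ will be a sum of terms each of which, after the same "completing the square'' manipulation, can be written in terms of the matrices $R := W^{-1/2}(G - KW^{-1}\Delta)$, $S := W^{-1/2}\Delta W^{-1/2}$, and $W^{-1/2}KW^{-1/2}$. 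The key algebraic fact I expect to need is that $D^3\xi_s[H,V] = 6\, R^* S R$ (up to the sign and symmetrization), so that the compatibility inequality \eqref{eq:compat} to be proved becomes $6 R^*SR \nsd 3\beta (D^2F(X,Y)[H,V])^{1/2}\cdot 2 R^*R$, i.e.
\begin{equation*}
R^* S R \nsd \beta \,(D^2F(X,Y)[H,V])^{1/2}\, R^*R .
\end{equation*}
Since $-\|S\| R^*R \nsd R^*SR \nsd \|S\| R^*R$, with $\beta = 1$ it suffices to prove the bound $\|S\| \leq (D^2 F(X,Y)[H,V])^{1/2}$.

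The second step is therefore to bound $\|S\| = \|W^{-1/2}\Delta W^{-1/2}\|$ where $\Delta = sV+(1-s)H$ and $W = sY+(1-s)X$. This is exactly what Lemma \ref{lem:eta-bnd} delivers: $\|S\| \leq \eta = \max\{\|X^{-1/2}HX^{-1/2}\|, \|Y^{-1/2}VY^{-1/2}\|\}$. On the other hand, from Example \ref{ex:-logdet} we have $D^2 F(X,Y)[H,V] = \|X^{-1/2}HX^{-1/2}\|_F^2 + \|Y^{-1/2}VY^{-1/2}\|_F^2 \geq \max\{\|X^{-1/2}HX^{-1/2}\|_F^2, \|Y^{-1/2}VY^{-1/2}\|_F^2\} \geq \eta^2$, since the Frobenius norm dominates the spectral norm. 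Hence $\|S\| \leq \eta \leq (D^2F(X,Y)[H,V])^{1/2}$, which completes the argument.

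The main obstacle is the first step: carrying out the differentiation of $\xi_s(X,Y) = -(Y-X)(sY+(1-s)X)^{-1}(Y-X)$ carefully and recognizing that, after the "complete the square'' substitution $R = W^{-1/2}(G-KW^{-1}\Delta)$, the third derivative collapses to the clean form $6\,R^*SR$ with $S = W^{-1/2}\Delta W^{-1/2}$. One has to be careful that differentiating the $W^{-1}$ factors produces several terms (from the product rule applied to $-KW^{-1}K$, one gets three second-order terms and then six third-order terms), and that all the cross terms reorganize into this single expression; the homogeneity of $\xi_s$ of degree $1$ in $(X,Y)$ and degree $2$ in the direction $(H,V)$, together with the fact that $\xi_s$ is (minus) a Schur complement, is what makes this collapse happen. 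Once the derivatives are in this form, the rest is immediate from Lemma \ref{lem:eta-bnd} and Example \ref{ex:-logdet}, and we obtain $1$-compatibility.
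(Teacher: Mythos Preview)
Your approach is correct and essentially the same as the paper's: both reduce the compatibility inequality to the bound $\lambda_{\max}(S)\leq (D^2F(X,Y)[H,V])^{1/2}$ with $S=W^{-1/2}\Delta W^{-1/2}$, and both obtain this bound from Lemma~\ref{lem:eta-bnd}. The paper packages the derivative computation via the $2\times 2$ block matrices of Lemma~\ref{lem:diff-facts} and then factors $\begin{bmatrix}B_s^3&-B_s^2\\-B_s^2&B_s\end{bmatrix}=\begin{bmatrix}B_s\\-I\end{bmatrix}B_s\begin{bmatrix}B_s&-I\end{bmatrix}$; your ``complete the square'' is literally the same factorization after contracting with the outer vectors.

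One small correction: the order of $K$ and $\Delta$ in your $R$ is wrong. With $R:=W^{-1/2}(G-KW^{-1}\Delta)$ one gets
\[
R^*R=(G-\Delta W^{-1}K)W^{-1}(G-KW^{-1}\Delta),
\]
which does \emph{not} equal $-\tfrac12 D^2\xi_s$ (the cross and quartic terms have $K$ and $\Delta$ in the wrong order). The right choice is
\[
R=W^{-1/2}\bigl(G-\Delta W^{-1}K\bigr),\qquad S=W^{-1/2}\Delta W^{-1/2},
\]
and then a direct computation gives exactly $D^2\xi_s=-2R^*R$ and $D^3\xi_s=6\,R^*SR$, after which your argument $R^*SR\nsd \lambda_{\max}(S)\,R^*R\leq (D^2F)^{1/2}R^*R$ goes through verbatim.
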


\begin{proof}
	For $s\in [0,1]$ let $\Ybar = (1-s)X+sY$, $\Vbar = (1-s)H+sV$ and $\Bbar = \Ybar^{-1/2}\Vbar\Ybar^{-1/2}$. 
	The directional derivatives of $\xi_s$ are given in Lemma~\ref{lem:diff-facts}.
	For $k\geq 2$ we obtain
		\[	D^k\xi_s(X,Y)[H,V] 
		= -k{!}\begin{bmatrix} \Ybar^{-\frac{1}{2}}(Y-X) \\ \Ybar^{-\frac{1}{2}}(V-H)\end{bmatrix}^*
			\begin{bmatrix} (-\Bbar)^{k} & (-\Bbar)^{k-1}\\(-\Bbar)^{k-1} & (-\Bbar)^{k-2}\end{bmatrix}\begin{bmatrix}\Ybar^{-\frac{1}{2}}(Y-X)\\\Ybar^{-\frac{1}{2}}(V-H)\end{bmatrix}.
				\]
For $F(X,Y) = -\log \det X-\log \det Y$, recall from Example \ref{ex:-logdet} that 
\[
D^2 F(X,Y)[H,V] = \|X^{-1/2} H X^{-1/2}\|_F^2+\|Y^{-1/2} V Y^{-1/2}\|_F^2.
\]
To prove 1-compatibility of 
	$\xi_s$ with $F$, it suffices to prove the following inequality
\begin{equation}
\label{eq:compatxicmain}
\begin{bmatrix}
\Bbar^3 & -\Bbar^2\\ -\Bbar^2 & \Bbar
\end{bmatrix}
\nsd
 \sqrt{\|X^{-1/2} H X^{-1/2}\|_F^2+\|Y^{-1/2} V Y^{-1/2}\|_F^2} \begin{bmatrix}
\Bbar^2 & -\Bbar\\ -\Bbar & I
\end{bmatrix}.
\end{equation}
	This is because the $1$-compatibility condition for 
	$\xi_s$ follows from~\eqref{eq:compatxicmain} by 
	applying the positive linear map 
	\[ Z \mapsto \begin{bmatrix} \Ybar^{-1/2}(Y-X) \\ \Ybar^{-1/2}(V-H)\end{bmatrix}^*Z\begin{bmatrix} \Ybar^{-1/2}(Y-X) \\ \Ybar^{-1/2}(V-H)\end{bmatrix}\]
		to both sides.

	We now focus on establishing~\eqref{eq:compatxicmain}. 
	The matrix on the left-hand side of~\eqref{eq:compatxicmain} can be expressed as
\[
\begin{bmatrix}
\Bbar^3 & -\Bbar^2\\ -\Bbar^2 & \Bbar
\end{bmatrix}
= \begin{bmatrix} \Bbar \\ -I \end{bmatrix}
\Bbar
\begin{bmatrix} \Bbar & -I\end{bmatrix} 
	\nsd \lambda_{\max}(\Bbar) \begin{bmatrix}
\Bbar^2 & -\Bbar\\ -\Bbar & I
\end{bmatrix},
\]
	where $\lambda_{\max}(\Bbar)$ is the largest eigenvalue of $\Bbar$. From Lemma~\ref{lem:eta-bnd} we have that 
	\begin{align*}
		\lambda_{\max}(\Bbar) &\leq \max\{\|X^{-1/2}HX^{-1/2}\|,\|Y^{-1/2}VY^{-1/2}\|\}\\
		& \leq \sqrt{\|X^{-1/2}HX^{-1/2}\|^2+\|Y^{-1/2}VY^{-1/2}\|^2}\\
		& \leq \sqrt{\|X^{-1/2}HX^{-1/2}\|_F^2 + \|Y^{-1/2}VY^{-1/2}\|_F^2},
	\end{align*}
which  implies \eqref{eq:compatxicmain}.
\end{proof}

We are now in a position to prove Theorem~\ref{thm:main1}.
\begin{proof}[{Proof of Theorem~\ref{thm:main1}}]
	Theorem~\ref{thm:main1} follows directly from Theorem~\ref{thm:nesterov}, Proposition~\ref{prop:conic}, and Lemma~\ref{lem:compat1}. The function 
	\[ \S_{++}^n\times \S_{++}^n\times \S^m\ni (X,Y,Z) \mapsto -\log\det(\phi(P_g(X,Y)-Z)-\log\det(X)-\log\det(Y)\]
	has barrier parameter of $2n+m$ because it is $(2n+m)$-logarithmically homogeneous. The matching 
	lower bound
	on the barrier parameter follows from Corollary~\ref{cor:psd-dom-lb} (in Section~\ref{sec:lb}, to follow)
	since $\phi \circ P_g:\S_{++}^n\times \S_{++}^n \mapsto \S^m$ is $\S_+^m$-concave and 
	positively homogeneous of degree one.
\end{proof}

The compatibility result required for the proof of Theorem~\ref{thm:main2} is the following variation
on Lemma~\ref{lem:compat1}.
\begin{lemma}
	\label{lem:compat2}
	For any $s \geq 0$, define $G_s:\S^{n_1}_{++} \times \S^{n_2}_{++} \to \S^{n_1 n_2}$ by $G_s(X,Y) = \xi_s(X\otimes I, I \otimes \bar Y)$ where $\xi_s$ is as in \eqref{eq:xis}. Then $G_s$ is 1-compatible with $F(X,Y) = -\log \det X-\log \det Y$ where $(X,Y) \in \S^{n_1}_{++} \times \S^{n_2}_{++}$.
\end{lemma}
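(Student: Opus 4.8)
The plan is to mimic the proof of Lemma~\ref{lem:compat1} almost verbatim, using that $G_s$ is the composition of $\xi_s$ with the \emph{linear} map $L:(X,Y)\mapsto (X\otimes I_{n_2},\, I_{n_1}\otimes\bar Y)$ from $\S^{n_1}\times\S^{n_2}$ into $\S^{n_1n_2}\times\S^{n_1n_2}$. Write $\cX=X\otimes I_{n_2}$, $\cY=I_{n_1}\otimes\bar Y$, and, for a direction $(H,V)\in\S^{n_1}\times\S^{n_2}$, write $\cH=H\otimes I_{n_2}$, $\cV=I_{n_1}\otimes\bar V$. Since $L$ maps $\S^{n_1}_{++}\times\S^{n_2}_{++}$ into a pair of positive definite matrices, $G_s=\xi_s\circ L$ inherits $C^3$-smoothness and $\S^{n_1n_2}_+$-concavity from $\xi_s$, we have $\cl\dom G_s=\S^{n_1}_+\times\S^{n_2}_+=\cl\dom F$, and the chain rule gives $D^kG_s(X,Y)[H,V]=D^k\xi_s(\cX,\cY)[\cH,\cV]$. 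I would then fix $s\in[0,1]$ (the range relevant to the integral representation~\eqref{eq:intrep}), set $\Ybar=(1-s)\cX+s\cY\pd 0$, $\Vbar=(1-s)\cH+s\cV$, $\Bbar=\Ybar^{-1/2}\Vbar\Ybar^{-1/2}$, and substitute $\cX,\cY,\cH,\cV$ into the derivative formula and the positive-linear-map reduction appearing in the proof of Lemma~\ref{lem:compat1}. That reduction shows $1$-compatibility of $G_s$ with $F$ follows from the matrix inequality
\[
\begin{bmatrix}\Bbar^3 & -\Bbar^2\\ -\Bbar^2 & \Bbar\end{bmatrix}\nsd\sqrt{\|X^{-1/2}HX^{-1/2}\|_F^2+\|Y^{-1/2}VY^{-1/2}\|_F^2}\;\begin{bmatrix}\Bbar^2 & -\Bbar\\ -\Bbar & I\end{bmatrix},
\]
and the same factorization through $\Bbar$ used in Lemma~\ref{lem:compat1} shows the left-hand side is $\nsd\lambda_{\max}(\Bbar)$ times the matrix on the right, so it suffices to bound $\lambda_{\max}(\Bbar)\le\sqrt{\|X^{-1/2}HX^{-1/2}\|_F^2+\|Y^{-1/2}VY^{-1/2}\|_F^2}$. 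The point is that the quantity under the square root is $D^2F(X,Y)[H,V]$ (Example~\ref{ex:-logdet}), which is built from the \emph{small} matrices $X,Y$ and not from $\cX,\cY$.

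The key step is this spectral-norm bound, and here the Kronecker structure does the work. By Lemma~\ref{lem:eta-bnd} applied to $(\cX,\cY,\cH,\cV)$ we get $\lambda_{\max}(\Bbar)\le\|\Bbar\|\le\max\{\|\cX^{-1/2}\cH\cX^{-1/2}\|,\|\cY^{-1/2}\cV\cY^{-1/2}\|\}$. Using the Kronecker identities recorded in Section~\ref{sec:prelim}, in particular $\cX^{-1/2}=X^{-1/2}\otimes I$, $\cY^{-1/2}=I\otimes\bar Y^{-1/2}$ and $\overline{Y^{-1/2}VY^{-1/2}}=\bar Y^{-1/2}\bar V\bar Y^{-1/2}$, I would rewrite $\cX^{-1/2}\cH\cX^{-1/2}=(X^{-1/2}HX^{-1/2})\otimes I$ and $\cY^{-1/2}\cV\cY^{-1/2}=I\otimes\overline{(Y^{-1/2}VY^{-1/2})}$, and then invoke that the spectral norm is unchanged both by tensoring with an identity block and by entry-wise complex conjugation. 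This gives $\|\cX^{-1/2}\cH\cX^{-1/2}\|=\|X^{-1/2}HX^{-1/2}\|$ and $\|\cY^{-1/2}\cV\cY^{-1/2}\|=\|Y^{-1/2}VY^{-1/2}\|$, hence
\[
\lambda_{\max}(\Bbar)\le\max\{\|X^{-1/2}HX^{-1/2}\|,\|Y^{-1/2}VY^{-1/2}\|\}\le\sqrt{\|X^{-1/2}HX^{-1/2}\|_F^2+\|Y^{-1/2}VY^{-1/2}\|_F^2},
\]
which is exactly the bound needed to close the argument.

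I do not expect a serious obstacle once the reduction above is set up; the one thing to be careful about — and the reason this deserves a separate lemma rather than following formally from Lemma~\ref{lem:compat1} — is that $F$ is the barrier for the \emph{small} product cone $\S^{n_1}_+\times\S^{n_2}_+$, so $D^2F(X,Y)[H,V]$ does not carry the dimension factors $n_2$ and $n_1$ that the naive barrier $-\log\det(X\otimes I)-\log\det(I\otimes\bar Y)=-n_2\log\det X-n_1\log\det Y$ for the cone containing $(\cX,\cY)$ would contribute. The estimate still closes because the compatibility condition only requires control of the \emph{spectral} norm $\lambda_{\max}(\Bbar)$, and spectral norms are insensitive to tensoring with the identity; this is precisely what will later allow Theorem~\ref{thm:main2} to achieve barrier parameter $n_1+n_2+m$ rather than one growing like $n_1n_2$.
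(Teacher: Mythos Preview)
Your proposal is correct and follows essentially the same approach as the paper: reduce to the same matrix inequality as in Lemma~\ref{lem:compat1}, factor through $\Bbar$ to reduce to bounding $\lambda_{\max}(\Bbar)$, then apply Lemma~\ref{lem:eta-bnd} to the tensored matrices and use that spectral norms are unchanged by tensoring with the identity and by complex conjugation. Your closing paragraph, explaining why the argument still closes with the barrier on the small cone despite the absence of the dimension factors $n_1,n_2$, is a nice piece of commentary that the paper leaves implicit.
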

\begin{proof}
	First, we observe that $D^kG_s(X,Y)[H,V] = D^k\xi_s(X\otimes I,I\otimes \bar{Y})[H\otimes I,I\otimes \bar{V}]$. 
	Let $\Ybar = (1-s)(X\otimes I)+s(I\otimes \bar{Y}), \Vbar = (1-s)(H\otimes I)+s(I\otimes \bar{V})$, and $\Bbar = \Ybar^{-1/2}\Vbar\Ybar^{-1/2}$. Note that 
	the expression for $\Bbar$ in terms of $X,Y,H$, and $V$ differs from the expression in Lemma~\ref{lem:compat1}.

Following the same argument as for Lemma~\ref{lem:compat1}, 
	to prove 1-compatibility of $G_s$ with $F$, it suffices to prove that 
	\[ \lambda_{\max}(\Bbar) \leq  \sqrt{\|X^{-1/2}HX^{-1/2}\|_F^2+\|Y^{-1/2}VY^{-1/2}\|_F^2}.\]
	By applying~\ref{lem:eta-bnd} to the matrices $X\otimes I, I \otimes \bar{Y}, H\otimes I$, and $I\otimes \bar{V}$, and using the facts that $\|M\otimes I\| = \|M\|$ and $\|M\| = \|I\otimes \bar{M}\|$,  we see that 
	\begin{align*}
		\lambda_{\max}(\Bbar) &\leq \max\{\|(X\otimes I)^{-\frac{1}{2}}(H\otimes I)(X\otimes I)^{-\frac{1}{2}}\|,\|(I\otimes \bar{Y})^{-\frac{1}{2}}(I\otimes \bar{V})(I\otimes \bar{Y})^{-\frac{1}{2}}\|\}\\
		& = \max\{\|X^{-1/2}HX^{-1/2}\|,\|Y^{-1/2}VY^{-1/2}\|\}\\
		& \leq \sqrt{\|X^{-1/2}HX^{-1/2}\|_F^2+\|Y^{-1/2}VY^{-1/2}\|_F^2},
	\end{align*}
	as required.
\end{proof}
We are now in a position to prove Theorem~\ref{thm:main2}.
\begin{proof}[{Proof of Theorem~\ref{thm:main2}}]
	Theorem~\ref{thm:main2} follows directly from Theorem~\ref{thm:nesterov}, Proposition~\ref{prop:conic}, and Lemma~\ref{lem:compat2}. The lower bound on the barrier parameter follows from 
	Corollary~\ref{cor:psd-dom-lb} (in Section~\ref{sec:lb}, to follow)
	since $Q_g^\phi:\S_{++}^{n_1}\times \S_{++}^{n_2} \mapsto \S^m$ is concave and 
	positively homogeneous of degree one. 
\end{proof}

\subsection{Lower bounds on the barrier parameters}
\label{sec:lb}

In this section we establish lower bounds on the barrier parameters of convex cones related to those constructed in Theorems~\ref{thm:main1} and~\ref{thm:main2}. To establish these lower bounds we apply the following result of Nesterov.
\begin{theorem}[{\cite[Theorem 5.4.1]{nesterov2018lectures}}]
	\label{thm:lb-nesterov}
	Let $C$ be a closed convex set with nonempty interior and let $x_0 \in \interior C$. Let $p_1,\ldots,p_k$ be recession directions, i.e., satisfying 
	$x_0 + \alpha p_i \in C$ for all $\alpha \geq 0$ and all $i=1,2,\ldots,k$. 
	Let $b_1,\ldots,b_k$ be positive scalars that satisfy 
	$x_0 - b_i p_i \notin \interior C$ for $i=1,2,\ldots,k$. 
	Let $a_1,\ldots,a_k$ be positive scalars that satisfy
	$x_0 - \sum_{i=1}^{k}a_i p_i \in C$. 
	Then any self-concordant barrier for $C$ has parameter at least $\sum_{i=1}^{k}\frac{a_i}{b_i}$. 
\end{theorem}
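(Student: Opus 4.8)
This is \cite[Theorem 5.4.1]{nesterov2018lectures}, and the proof I would reconstruct proceeds by reducing everything to a one-dimensional statement applied separately along each direction $p_i$. Let $F$ be an \emph{arbitrary} $\nu$-self-concordant barrier for $C$; the goal is to show $\nu \geq \sum_{i=1}^{k} a_i/b_i$. I would use two standard properties of $\nu$-barriers, both consequences of~\eqref{eq:nu-def} together with the self-concordance inequality and recorded in~\cite[Chapter~5]{nesterov2018lectures}. The first is \emph{semi-boundedness}: $\langle \nabla F(x), y-x\rangle \leq \nu$ for all $x \in \interior C$ and all $y \in C$ (proved first for $y \in \dom F = \interior C$, then extended to $y \in \cl\dom F = C$ by taking $y_\varepsilon = (1-\varepsilon)y + \varepsilon x \to y$). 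Applying this at $x = x_0$ and $y = x_0 - \sum_i a_i p_i \in C$ gives
\[
\nu \;\geq\; \langle \nabla F(x_0), y-x_0\rangle \;=\; \sum_{i=1}^{k} a_i \,\langle -\nabla F(x_0), p_i\rangle ,
\]
so it suffices to prove the lower bound $\langle -\nabla F(x_0), p_i\rangle \geq 1/b_i$ for each $i$.

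To obtain this bound I would restrict $F$ to the line $t \mapsto x_0 - t p_i$ and set $\phi_i(t) = F(x_0 - t p_i)$. Because $p_i$ is a recession direction of $C$ and $x_0 \in \interior C$, one has $x_0 + \alpha p_i \in \interior C$ for every $\alpha \geq 0$, so the interval $I_i = \{t : x_0 - t p_i \in \interior C\}$ contains $(-\infty,0]$; and since $x_0 - b_i p_i \notin \interior C$, its right endpoint $\bar t_i := \sup I_i$ satisfies $0 < \bar t_i \leq b_i < \infty$. Hence $\phi_i$ is a $\nu$-self-concordant barrier on the half-line $(-\infty,\bar t_i)$ with $\phi_i(t) \to +\infty$ as $t \to \bar t_i^{-}$, and $\phi_i'(0) = \langle -\nabla F(x_0), p_i\rangle$. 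The required bound then follows from the one-dimensional lemma: \emph{if $\phi$ is a $\nu$-self-concordant barrier on $(-\infty,\bar t)$ with $\bar t<\infty$, then $\phi'(0) \geq 1/\bar t$}, since $\phi_i'(0) \geq 1/\bar t_i \geq 1/b_i$.

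Finally, for the one-dimensional lemma — which is where the real content lies — I would combine two ingredients. Semi-boundedness of $\phi$ on $(-\infty,\bar t)$, namely $\phi'(t)(s-t) \leq \nu$ for all $s < \bar t$, forces $\phi'(t) \geq 0$ for every $t$ (otherwise the left-hand side tends to $+\infty$ as $s \to -\infty$); hence $\phi$ is nondecreasing and $L := \lim_{t\to-\infty} \phi'(t)$ exists in $[0,\infty)$. On the other hand $\phi \to +\infty$ at $\bar t^{-}$, so $\phi'(t)\to+\infty$ there, so $\phi''$ is unbounded near $\bar t$; together with the self-concordance inequality (which makes $u := (\phi'')^{-1/2}$ $1$-Lipschitz on the set where $\phi'' > 0$, a set that a short separate argument using the barrier inequality $(\phi')^2 \le \nu\phi''$ shows is all of $(-\infty,\bar t)$) this yields $u(t) \leq \bar t - t$, i.e.\ $\phi''(t) \geq (\bar t - t)^{-2}$. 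Integrating,
\[
\phi'(0) \;=\; L + \int_{-\infty}^{0}\phi''(s)\,ds \;\geq\; 0 + \int_{-\infty}^{0} \frac{ds}{(\bar t - s)^{2}} \;=\; \frac{1}{\bar t}.
\]
I expect the main obstacle to be the technical bookkeeping in this last paragraph — ruling out $\phi'' = 0$ anywhere in the domain (needed so that $u$ is globally defined) and verifying $u(t)\to 0$ as $t\to\bar t^{-}$ — but this is routine and is exactly the structure theory of self-concordant barriers developed in~\cite[Chapter~5]{nesterov2018lectures}.
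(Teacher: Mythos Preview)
The paper does not supply its own proof of this statement: Theorem~\ref{thm:lb-nesterov} is quoted verbatim from~\cite[Theorem~5.4.1]{nesterov2018lectures} and used as a black box in Section~\ref{sec:lb}. So there is nothing in the paper to compare against.

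That said, your reconstruction is correct and is essentially the argument given by Nesterov. The decomposition into (a)~semi-boundedness $\langle \nabla F(x_0), y-x_0\rangle \leq \nu$ applied with $y=x_0-\sum_i a_ip_i$, and (b)~the one-dimensional lower bound $\langle -\nabla F(x_0),p_i\rangle \geq 1/b_i$, is exactly the standard route. One small simplification in your last paragraph: rather than arguing that $u=(\phi'')^{-1/2}$ tends to~$0$ at~$\bar t$ and then invoking its Lipschitz property, it is cleaner to use the Dikin-ellipsoid inclusion directly. Self-concordance gives $|u'|\leq 1$ wherever $\phi''>0$, which forces the open interval $(t-u(t),\,t+u(t))$ to lie inside $\dom\phi=(-\infty,\bar t)$; hence $t+u(t)\leq \bar t$, i.e., $u(t)\leq \bar t - t$, with no separate limit argument needed. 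The positivity of $\phi''$ everywhere follows, as you note, from the barrier inequality $(\phi')^2\leq \nu\phi''$ together with the fact that $\phi'$ cannot vanish identically on any subinterval (else $\phi$ would be bounded near $\bar t$). With that minor streamlining, your write-up is complete.
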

Recall that if $K'$ is a closed convex cone we say that $h$ is $K'$-concave if $h(\lambda x + (1-\lambda)y) - \lambda h(x) - (1-\lambda)h(y) \in K'$ for all $x,y\in \dom(h)$ and $\lambda\in [0,1]$. Recall, also, that a function $h$ is positively homogeneous
of degree one if $h(\lambda x) = \lambda h(x)$ for all $\lambda>0$ and $x\in \dom(h)$. Note that the domain of a positively homogeneous function is necessarily a cone.

The following technical result (Theorem~\ref{thm:lb-technical}) shows how to use
Theorem~\ref{thm:lb-nesterov} to obtain a lower bound on the barrier
parameter of $\cl\hypo(h)$ when $h$ is positively homogeneous of degree one and concave with
respect to a closed convex cone.
\begin{theorem}
	\label{thm:lb-technical}
Let $E$ and $E'$ be finite dimensional real vector spaces. 
Suppose $K\subseteq E$ is an open convex cone and $K'\subseteq E'$ is a closed 
convex cone. Let $h:K\rightarrow E'$
be $K'$-concave and positively homogeneous of degree one.

	Let $x_1,x_2,\ldots,x_{k} \in K$ and let $a_1,\ldots,a_k$ be positive scalars such that $x_0 := \sum_{i=1}^{k}a_i x_i$. 
	Let $b_1,\ldots,b_k$ be positive scalars such that $x_0 - b_ix_i \not\in K$ for $i=1,2,\ldots,k$. 

	Let $z_1,z_2,\ldots,z_{k'}\in K'$ and let $a_1',\ldots,a_{k'}'$ be positive scalars be such that $z_0 := \sum_{i=1}^{k'}a_i'z_i \in \interior(K')$. Let $b_1',\ldots,b_{k'}'$ be positive scalars such that $z_0 - b_i'z_i \not\in \interior(K')$ for $i=1,2,\ldots,k'$. 

	Then any self concordant barrier for
	$ K_h = \cl\{(x,z)\in K\times E'\;:\; h(x) - z \in K'\}$ 
	has barrier parameter at least $\sum_{i=1}^{k}\frac{a_i}{b_i} + \sum_{i=1}^{k'}\frac{a_i'}{b_i'}$. 
\end{theorem}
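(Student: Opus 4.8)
The plan is to directly apply Nesterov's lower bound result (Theorem~\ref{thm:lb-nesterov}) to the set $K_h$, by cooking up explicit recession directions and the scalars $a_i, b_i$ that the theorem requires. The directions come in two families: a ``base-point'' family built from $x_1,\dots,x_k \in K$ and a ``hypograph'' family built from $z_1,\dots,z_{k'} \in K'$. I would set $x_0 := \sum_{i=1}^k a_i x_i$ (as in the statement) and choose the anchor point in $K_h$ to be $\hat{x}_0 = (x_0, h(x_0) - z_0)$, where $z_0 = \sum_{i=1}^{k'} a_i' z_i \in \interior(K')$. One must first check $\hat{x}_0 \in \interior(K_h)$: since $K$ is open, $z_0 \in \interior K'$, and $h$ is continuous (being concave on an open cone, hence locally Lipschitz), the constraint $h(x) - z \in K'$ holds with slack at $\hat x_0$ and the linear/cone constraints are satisfied with $x$ in the open cone $K$, so $\hat x_0$ is interior.

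\medskip

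\emph{The two families of recession directions.} For the hypograph family, take $p_i' = (0, -z_i)$ for $i=1,\dots,k'$. Then $\hat x_0 + \alpha p_i' = (x_0,\, h(x_0) - z_0 - \alpha z_i)$, and since $h(x_0) - z_0 \in \interior K'$ and $z_i \in K'$ with $K'$ a closed convex cone, we have $h(x_0) - (z_0 + \alpha z_i) \in K'$ for all $\alpha \geq 0$; this is a recession direction. Moreover $\hat x_0 - b_i' p_i' = (x_0, h(x_0) - (z_0 - b_i' z_i))$ leaves $\interior K_h$ precisely when $z_0 - b_i' z_i \notin \interior K'$, which is the hypothesis. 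For the base-point family, take $p_i = (x_i,\, \nabla)$ — here the subtle point is what second coordinate to use. The natural choice is $p_i = (x_i, L_i)$ where $L_i$ is chosen so that $h(x_0 + \alpha x_i) - L_i \cdot(\text{something})$ stays in $K'$; by positive homogeneity of degree one and $K'$-concavity, $h(x_0 + \alpha x_i) - h(x_0) - \alpha h(x_i) \in -K'$ is the wrong direction, but $h((1+\alpha) x_0') $-type estimates along the ray work if one picks $x_i$ proportional to $x_0$. Actually the cleanest route: set $p_i = (x_i,\, h'(x_0; x_i))$ using the (one-sided) directional derivative, or more robustly use concavity $h(x_0 + \alpha x_i) \succeq_{K'} h(x_0) + \alpha\bigl(h(x_0 + x_i) - h(x_0)\bigr)$ for $\alpha \in [0,1]$ combined with homogeneity for $\alpha \geq 1$. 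The key structural fact to exploit is that $h$ is \emph{positively homogeneous of degree one and $K'$-concave}, which makes $h$ \emph{superadditive}: $h(u+v) \succeq_{K'} h(u) + h(v)$. Hence $h(x_0 + \alpha x_i) \succeq_{K'} h(x_0) + \alpha h(x_i)$ for \emph{all} $\alpha \geq 0$, so $p_i := (x_i, h(x_i))$ is a recession direction for $K_h$: $\hat x_0 + \alpha p_i = (x_0 + \alpha x_i,\, h(x_0) + \alpha h(x_i) - z_0)$ and $h(x_0 + \alpha x_i) - (h(x_0) + \alpha h(x_i) - z_0) \succeq_{K'} z_0 \in \interior K' \subseteq K'$.

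\medskip

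\emph{Verifying the $b_i$ condition.} For $p_i = (x_i, h(x_i))$ we need $\hat x_0 - b_i p_i \notin \interior K_h$. We have $\hat x_0 - b_i p_i = (x_0 - b_i x_i,\, h(x_0) - b_i h(x_i) - z_0)$. By hypothesis $x_0 - b_i x_i \notin K$; since $K$ is open and membership of the first coordinate in $K$ is required for $\interior K_h$ (indeed for $K_h$ itself, by the closure in the definition — one should double check that the closure does not enlarge the first-coordinate projection beyond $\cl K$, and that $x_0 - b_i x_i \notin K$ with $K$ open means $x_0 - b_i x_i \notin \interior(\cl K)\supseteq$ the relevant set — this needs a small argument), the point fails to be in $\interior K_h$. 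This is the step I expect to be the main obstacle: ensuring that ``first coordinate not in the open cone $K$'' genuinely certifies ``not in the interior of the \emph{closure} $K_h$,'' since $K_h$ is defined as a closure and its interior could in principle have a first-coordinate projection strictly larger than $K$. I would resolve this by noting $\interior(K_h)$ projects into $\interior(\cl K)$, and for an open convex cone $K$ one has $\interior(\cl K) = K$ (true for convex sets with nonempty interior), so the projection of $\interior K_h$ lies in $K$, giving the claim. Similarly for the $z$-family, $z_0 - b_i' z_i \notin \interior K'$ forces $\hat x_0 - b_i' p_i' \notin \interior K_h$ because the second-coordinate constraint $h(x) - z \in K'$ at that point reads $z_0 - b_i' z_i \in K'$ but we can push $x$ slightly and the only binding constraint is the cone one, which fails to hold with strict slack.

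\medskip

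\emph{Conclusion.} Finally I would check the ``$a$-sum'' hypothesis of Theorem~\ref{thm:lb-nesterov}: that $\hat x_0 - \sum_i a_i p_i - \sum_i a_i' p_i' \in K_h$. Its first coordinate is $x_0 - \sum_i a_i x_i = 0$ by the definition $x_0 = \sum a_i x_i$ — but $0 \notin K$ since $K$ is an open cone not containing the origin, so one must instead combine \emph{all} the directions with a common small scaling, or, more simply, apply Theorem~\ref{thm:lb-nesterov} with the point $\hat x_0$ and directions scaled so that the combination lands on the boundary rather than outside; the cleanest fix is to replace the requirement by the equivalent statement that $\hat x_0 - \sum_i a_i p_i - \sum_i a_i' p_i'$ lies in $\cl K_h = K_h$, and $x_0 - \sum a_i x_i = 0 \in \cl K$, while the second coordinate is $h(x_0) - \sum a_i h(x_i) - z_0 + \sum a_i' z_i = h(x_0) - \sum a_i h(x_i) \succeq_{K'} 0$ by superadditivity (taking $\sum a_i' z_i = z_0$). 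So the combined point is in $\cl K_h = K_h$. Theorem~\ref{thm:lb-nesterov} then yields barrier parameter at least $\sum_{i=1}^k \frac{a_i}{b_i} + \sum_{i=1}^{k'} \frac{a_i'}{b_i'}$, as claimed. The superadditivity of $h$ is the one ingredient doing all the real work; everything else is bookkeeping about interiors of closures of cones.
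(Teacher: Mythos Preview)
Your approach is essentially the paper's: same base point, same two families of recession directions $p_i=(x_i,h(x_i))$ and $p_i'=(0,-z_i)$, and the same use of superadditivity of $h$. But the final membership check fails, and this is a real gap, not just bookkeeping.

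You compute the combined point to be $(0,\,w)$ with $w=h(x_0)-\sum_i a_i h(x_i)\in K'$ (by superadditivity), and then assert $(0,w)\in K_h$. The sign is backwards. For $(0,w)\in K_h=\cl\{(x,z):x\in K,\ h(x)-z\in K'\}$ one needs $-w\in K'$, not $w\in K'$: take $x_n\to 0$ along a ray in $K$, so $h(x_n)\to 0$ by homogeneity, and $h(x_n)-z_n\in K'$ with $z_n\to w$ forces $-w\in K'$ since $K'$ is closed. (Check the scalar case $K=(0,\infty)$, $K'=[0,\infty)$, $h(x)=x$: then $K_h=\{(x,z):x\ge 0,\ z\le x\}$, and $(0,w)\in K_h$ iff $w\le 0$.) Superadditivity gives $w\succeq_{K'}0$, the opposite of what you need, so your combined point is generally \emph{not} in $K_h$.

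The paper fixes exactly this by scaling the $z$-directions: take $y_0=(x_0,\,h(x_0)-\tau z_0)$ and $p_i'=(0,-\tau z_i)$, and use $c_i'=(1-\tau'/\tau)a_i'$ instead of $a_i'$, where $\tau'>0$ is chosen so that $\tau' z_0\succeq_{K'} h(x_0)-\sum_i a_i h(x_i)$ (possible since $z_0\in\interior K'$). Then the combined point becomes $(0,\,h(x_0)-\sum_i a_i h(x_i)-\tau' z_0)$, whose second coordinate lies in $-K'$ by the choice of $\tau'$, so the point is in $K_h$. This yields the bound $\sum a_i/b_i+(1-\tau'/\tau)\sum a_i'/b_i'$, and letting $\tau\to\infty$ gives the claim. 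Without this scaling-and-limit device (or an equivalent), the argument does not close.
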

\begin{proof}
	We begin with two preliminary observations. First, since $K$ is open by assumption, we note that the interior of $K_h$ is $\interior(K_h) = \{(x,z)\in K\times E'\;:\; h(x) - z \in \interior(K')\}$.
	Second, we note that if $-z\in K'$ then $(0,z)\in K_h$. To see why this is true, note that 
	if $-z\in K'$ then $(\epsilon x,z+h(\epsilon x))\in K_h$ for all $\epsilon > 0$. Since $h$ is positively 
	homogeneous of degree one, it follows that $(\epsilon x, z + \epsilon h(x))\in K_h$ for all $\epsilon >0$. 
	Since $K_h$ is closed, it follows that $(0,z)\in K_h$. 

	Let $\tau'>0$ be such that $\tau'z_0 - (h(x_0) - \sum_{i=1}^{k}a_ih(x_i)) \in K'$. 
	Such a $\tau'$ exists because $z_0\in \interior(K')$. 
	Choose some arbitrary $\tau > \tau'$ and consider the point $y_0 = (x_0,h(x_0) - \tau z_0)$. This satisfies $y_0\in \interior(K_h)$ because $z_0\in \interior(K')$ and $\tau>0$. Let $p_i = (x_i,h(x_i))\in K_h$ for $i=1,2,\ldots,k$ and let $p_i' = (0,-\tau z_i)\in K_h$ for $i=1,2,\ldots,k'$. 

	Observe that $y_0 - b_ip_i = (x_0 - b_ix_i, h(x_0) - b_ih(x_i) - \tau z_0)\not\in \interior(K_h)$ for $i=1,2,\ldots,k$ because $x_0-b_ix_i\not\in K$. Similarly $y_0 - b_i'p_i' = (x_0, h(x_0) - \tau (z_0 - b_i' z_i))\not\in\interior(K_h)$ for $i=1,2,\ldots,k'$ because
	$z_0 - b_i' z_i\not\in \interior(K')$. 

	Let $c_i = a_i$ for $i=1,2,\ldots,k$ and let $c_i' = (1-\tau'/\tau)a_i'$ for $i=1,2,\ldots,k'$. 
	Then 
 	\begin{align*}
 		y_0\! -\! \sum_{i=1}^{k}c_ip_i - \sum_{i=1}^{k'}c_i'p_i'&\! =\!
		\left(x_0 -\! \sum_{i=1}^{k}a_ix_i,h(x_0) - \tau z_0 -\! \sum_{i=1}^{k}a_ih(x_i) + (\tau-\tau')\sum_{i=1}^{k'}a_i'z_i\right)\\
		& = \left(0, h(x_0) -\! \sum_{i=1}^{k}a_ih(x_i) - \tau'z_0\right)  \in K_h
 	\end{align*}
	where the last assertion holds because $h(x_0) - \sum_{i=1}^{k}a_ih(x_i) - \tau'z_0 \in -K'$ (by our choice of $\tau'$). 
	We can now apply Theorem~\ref{thm:lb-nesterov} with the directions $p_1,\ldots,p_k,p_1',\ldots,p_{k'}'$
	to give the lower bound 
	\[ \sum_{i=1}^{k} \frac{c_i}{b_i} + \sum_{i=1}^{k'}\frac{c_i'}{b_i'} = \sum_{i=1}^{k}\frac{a_i}{b_i} + \left(1-\frac{\tau'}{\tau}\right)\sum_{i=1}^{k'} \frac{a_i'}{b_i'}\]
	on the barrier parameter of any self-concordant barrier for $K_h$. 
	Taking the limit as $\tau\rightarrow \infty$ completes the proof.
\end{proof}
We next specialize to the case when $K = \RR_{++}^{n}$ for some positive integer $n$.
\begin{proposition}
\label{prop:orthant-dom-lb}
	Let $n$ and $m$ be positive integers, and let $h:\RR_{++}^n\rightarrow \S^m$ be $\S_+^m$-concave  
	and positively homogeneous of degree one. Then any self-concordant barrier for $\cl(\hypo(h))$
	has barrier parameter at least $n+m$. 
\end{proposition}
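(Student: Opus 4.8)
The plan is to apply Theorem~\ref{thm:lb-technical} with $K = \RR_{++}^n$ and $K' = \S^m_+$, choosing the points and scalars so that the two contributions $\sum a_i/b_i$ and $\sum a_i'/b_i'$ come out to $n$ and $m$ respectively. The whole strategy hinges on finding, for each cone, a ``reference interior point'' written as a positive combination of recession directions, each of which cannot be moved very far in the negative direction before leaving the cone. For the orthant $K = \RR_{++}^n$, the natural choice is $x_i = e_i$ (the $i$-th standard basis vector) for $i = 1,\dots,n$, with $a_i = 1$ so that $x_0 = \sum_i e_i = \ones$. Then $x_0 - b_i x_i = \ones - b_i e_i \notin \RR_{++}^n$ as soon as $b_i \geq 1$, so we may take $b_i = 1$, giving $\sum_{i=1}^n a_i/b_i = n$. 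For the PSD cone $K' = \S^m_+$, the analogous choice is $z_i = E_{ii}$ (the rank-one diagonal matrix with a single $1$ in position $(i,i)$) for $i = 1,\dots,m$, with $a_i' = 1$ so that $z_0 = \sum_i E_{ii} = I \in \interior(\S^m_+)$; then $z_0 - b_i' z_i = I - b_i' E_{ii}$ fails to be positive definite once $b_i' \geq 1$, so $b_i' = 1$ works and $\sum_{i=1}^m a_i'/b_i' = m$.

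With these choices, Theorem~\ref{thm:lb-technical} immediately yields the lower bound $n + m$ on the barrier parameter of $K_h = \cl\{(x,z) \in \RR_{++}^n \times \S^m : h(x) - z \in \S^m_+\} = \cl(\hypo(h))$. The only hypotheses of Theorem~\ref{thm:lb-technical} that need to be confirmed are the structural ones: $K = \RR_{++}^n$ is an open convex cone and $K' = \S^m_+$ is a closed convex cone (both standard), $h$ is $\S^m_+$-concave (assumed) and positively homogeneous of degree one (assumed), and that the $x_i$ lie in $K$, the $z_i$ lie in $K'$, $x_0 \in K$ (clear since $\ones$ has all positive entries), and $z_0 \in \interior(K')$ (clear since $I \succ 0$). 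All of these are routine; there is no genuine obstacle here, since $\RR_{++}^n$ and $\S^m_+$ are precisely the cones whose self-concordant barrier parameters ($n$ and $m$) are classically known to be tight, and the choices above just reproduce the standard witnesses for those tightness results.

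The one point worth a line of care is verifying $x_0 - b_i x_i \notin K$ rather than $\notin \interior(K)$: since $K = \RR_{++}^n$ is already open, the hypothesis of Theorem~\ref{thm:lb-technical} is stated as $x_0 - b_i x_i \notin K$, and indeed $\ones - e_i$ has a zero coordinate, hence is not in the open cone $\RR_{++}^n$; taking $b_i = 1$ is therefore exactly the borderline value and is admissible. For the $\S^m_+$ side the condition is $z_0 - b_i' z_i \notin \interior(\S^m_+)$, and $I - E_{ii}$ is positive semidefinite but singular, hence not in the interior, so again $b_i' = 1$ is admissible. Plugging $\sum a_i/b_i = n$ and $\sum a_i'/b_i' = m$ into the conclusion of Theorem~\ref{thm:lb-technical} finishes the proof.
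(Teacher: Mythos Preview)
Your plan is essentially the right one, but there is a genuine gap: the directions $x_i = e_i$ do \emph{not} lie in $K = \RR_{++}^n$, since $e_i$ has $n-1$ zero entries. Theorem~\ref{thm:lb-technical} requires $x_i \in K$, and this is not a technicality that can be waved away: in the proof of Theorem~\ref{thm:lb-technical} one forms the recession directions $p_i = (x_i, h(x_i))$, and $h$ is only defined on the open cone $K$. You explicitly list ``the $x_i$ lie in $K$'' among the hypotheses to verify and then declare it routine, but it actually fails for your choice.

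The paper repairs this with a perturbation. For $0<\epsilon<1$ take $x_i = \epsilon\,\ones + (1-\epsilon)e_i \in \RR_{++}^n$ and $a_i = 1$, so that $x_0 = ((n-1)\epsilon+1)\ones$; then $x_0 - b_i x_i \notin \RR_{++}^n$ for $b_i = (n-1)\epsilon + 1$, giving $\sum_i a_i/b_i = n/((n-1)\epsilon+1)$. Combined with your (correct) choice on the $\S_+^m$ side, Theorem~\ref{thm:lb-technical} yields the lower bound $n/((n-1)\epsilon+1) + m$, and letting $\epsilon \to 0$ gives $n+m$. The $z_i = E_{ii}$ part of your argument is fine as written, since Theorem~\ref{thm:lb-technical} only requires $z_i \in K' = \S_+^m$ (closed), and $E_{ii}$ is positive semidefinite.
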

\begin{proof}
	It is enough to construct appropriate directions $x_1,\ldots,x_n\in \RR_{++}^n$ and $z_1,\ldots,z_m\in \S_+^m$
	and apply Theorem~\ref{thm:lb-technical}. 

	Let $0<\epsilon<1$. For $i=1,2,\ldots,n$ let $x_i = \epsilon \ones + (1-\epsilon) e_i$, where $e_i$ is the $i$th standard basis vector and $\ones$ is the vector with all entries equal to one. Let $a_1 = \cdots = a_n = 1$ so that 
	$x_0 = \sum_{i=1}^{n}x_i = ((n-1)\epsilon + 1)\ones$. Let $b_1 = \cdots = b_n = ((n-1)\epsilon + 1) > 0$.  
	Then $x_0 - b_i x_i \not\in \RR_{++}^n$ for $i=1,2,\ldots,n$.

	For $i=1,2,\ldots,m$ let $z_i = e_ie_i^*$. Let $a_1' = \cdots = a_m' = 1$ so that $z_0 = \sum_{i=1}^{m}a_i'z_i = I$. 
	Let $b_1' = \cdots = b_m' = 1$. Then $z_0 - b_i'z_i = I - e_ie_i^* \not\in \S_{++}^m$. 

	It follows from Theorem~\ref{thm:lb-technical} that any self-concordant barrier for $\cl(\hypo(h))$ has barrier parameter 
	at least 
	\[ \sum_{i=1}^{n}a_i/b_i + \sum_{i=1}^{m}a_i'/b_i' = \frac{n}{(n-1)\epsilon+1} + m.\]
	Taking the limit as $\epsilon\rightarrow 0$ completes the proof. 
\end{proof}
\begin{remark}
	We note that Proposition~\ref{prop:orthant-dom-lb} (with $m=1$) specializes to give a lower bound of $2n+1$ on the barrier parameter of 
\[ \cl\left\{(x,y,z)\in \RR_{++}^n\times \RR_{++}^n \times \RR\;:\; \sum_{i=1}^{n}x_ig(y_i/x_i) \geq z\right\}\]
	for any concave function $g:\RR_{++}\rightarrow \RR$, since $h(x,y) = \sum_{i=1}^{n}P_g(x_i,y_i)$ is concave and homogeneous of degree one. Similarly, Proposition~\ref{prop:orthant-dom-lb} implies a lower bound of $n+2$ on the barrier parameter of
\[ \cl\left\{(x,y,z)\in \RR_{++}\times \RR_{++}^n\times \RR\;:\; \sum_{i=1}^{n} x g(y_i/x) \geq z\right\}\]
	for any concave function $g:\RR_{++}\rightarrow \RR$, since $h(x,y)= \sum_{i=1}^{n} P_g(x,y_i)$ is concave and homogeneous
	of degree one. In the special case $n=1$ and $g(x) = x^{\alpha}$ for $\alpha\in (0,1)$, the resulting lower bound of $3$ was established by Nesterov~\cite[Lemma 5.4.9]{nesterov2018lectures}.
\end{remark}
The case where the domain is $\S_{++}^{n_1}\times \S_{++}^{n_2}$, which is relevant for Theorems~\ref{thm:main1} and~\ref{thm:main2}, is a straightforward corollary of Proposition~\ref{prop:orthant-dom-lb}.
\begin{corollary}
	\label{cor:psd-dom-lb}
	Let $n_1, n_2$ and $m$ be positive integers, and let $h:\S_{++}^{n_1}\times \S_{++}^{n_2}\rightarrow \S^m$ be $\S_+^m$-concave and positively homogeneous of degree one. Then any self-concordant barrier for $\cl(\hypo(h))$ has
	barrier parameter at least $n_1+n_2+m$. 
\end{corollary}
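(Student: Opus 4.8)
\textbf{Proof plan for Corollary~\ref{cor:psd-dom-lb}.}
The plan is to reduce to Proposition~\ref{prop:orthant-dom-lb} by restricting $h$ to pairs of diagonal matrices. Write $n = n_1 + n_2$, and let $\cA:\RR_{++}^{n_1}\times\RR_{++}^{n_2}\to\S_{++}^{n_1}\times\S_{++}^{n_2}$ be the linear map $\cA(x,y) = (\diag(x),\diag(y))$, where $\diag(v)$ denotes the diagonal matrix with diagonal $v$. Set $\tilde h := h\circ\cA:\RR_{++}^{n}\to\S^m$. Since $\cA$ is linear, $\tilde h$ inherits from $h$ the properties of being $\S_+^m$-concave and positively homogeneous of degree one: indeed $\tilde h(\lambda p + (1-\lambda)q) = h(\lambda\cA(p) + (1-\lambda)\cA(q))\psd \lambda h(\cA(p)) + (1-\lambda)h(\cA(q)) = \lambda\tilde h(p) + (1-\lambda)\tilde h(q)$, and $\tilde h(\tau p) = h(\tau\cA(p)) = \tau h(\cA(p)) = \tau\tilde h(p)$. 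Hence Proposition~\ref{prop:orthant-dom-lb} applies to $\tilde h$ and shows that any self-concordant barrier for $\cl(\hypo(\tilde h))$ has barrier parameter at least $n + m = n_1 + n_2 + m$.

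It then remains to transfer this lower bound from $\cl\hypo(\tilde h)$ to $\cl\hypo(h)$. Let $\widetilde{\cA}:\RR^{n}\times\S^m\to\S^{n_1}\times\S^{n_2}\times\S^m$ be the linear map $\widetilde{\cA}(x,y,Z) = (\diag(x),\diag(y),Z)$. First I would record that $h$, being $\S_+^m$-concave on an open convex set, is continuous (for every $u$, the scalar function $\langle u, h(\cdot)u\rangle$ is concave and finite on an open convex set, hence continuous, and $h$ is recovered from these by polarization), so $\interior\hypo(h) = \{(X,Y,Z): X,Y\pd 0,\ h(X,Y)\pd Z\}$ and likewise for $\tilde h$; in particular $\widetilde{\cA}^{-1}(\interior\hypo(h)) = \interior\hypo(\tilde h)$. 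Now suppose $F$ is a self-concordant $\nu$-barrier for $\cl\hypo(h)$. Then $\dom F$ is an open convex set with $\cl(\dom F) = \cl\hypo(h)$; since $\hypo(h)$ is convex with nonempty interior and $\interior\cl C = \interior C$ for any such $C$, this forces $\dom F = \interior\cl\hypo(h) = \interior\hypo(h)$. By the composition rule for self-concordant barriers (\cite[Thm.~5.3.3]{nesterov2018lectures}, recalled in Section~\ref{sec:prelim}), $F\circ\widetilde{\cA}$ is a $\nu$-barrier for $\cl(\widetilde{\cA}^{-1}(\dom F)) = \cl(\interior\hypo(\tilde h)) = \cl\hypo(\tilde h)$, where the last equality again uses that a convex set with nonempty interior equals the closure of its interior. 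Combining this with the conclusion of the previous paragraph yields $\nu \geq n_1 + n_2 + m$, which is the claim.

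I expect the only genuinely delicate step to be this last piece of bookkeeping: one must be sure that the closure of the preimage $\widetilde{\cA}^{-1}(\dom F)$ is exactly $\cl\hypo(\tilde h)$, and not some proper subset, which is why I pin down $\dom F$ as $\interior\hypo(h)$ and invoke continuity of $h$ together with the standard convexity facts $\interior\cl C = \interior C$ and $\cl\,\interior C = \cl C$. Everything else is an immediate consequence of the linearity of the diagonal embedding and of Proposition~\ref{prop:orthant-dom-lb}.
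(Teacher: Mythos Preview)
Your proposal is correct and follows essentially the same approach as the paper: define $\tilde h$ by restricting $h$ to diagonal pairs, apply Proposition~\ref{prop:orthant-dom-lb} to $\tilde h$, and then pull back any putative barrier $F$ for $\cl\hypo(h)$ along the diagonal embedding to obtain a barrier for $\cl\hypo(\tilde h)$ with the same parameter. The paper simply asserts the key identification $(x,y,Z)\in \cl\hypo(\tilde h) \iff (\diag(x),\diag(y),Z)\in \cl\hypo(h)$ without justification, whereas you supply the supporting convexity bookkeeping (continuity of $h$, $\dom F=\interior\hypo(h)$, $\cl\interior C=\cl C$) explicitly; this is a welcome elaboration rather than a different argument.
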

\begin{proof}
	Consider $\tilde{h}:\RR_{++}^{n_1}\times \RR_{++}^{n_2}\rightarrow \S^m$ defined by 
	$\tilde{h}(x,y) = h(\diag(x),\diag(y))$ 
	where $\diag(x)$ is the diagonal matrix with diagonal entries given by $x$. Then $(x,y,Z)\in \cl\hypo(\tilde{h})$ 
	if and only if $(\diag(x),\diag(y),Z)\in \cl\hypo(h)$. 
	If $(X,Y,Z)\!\mapsto\! F(X,Y,Z)$ were a self-concordant barrier for $\cl\hypo(h)$
	with parameter less than $n_1+n_2+m$ then $(x,y,Z)\mapsto F(\diag(x),\diag(y),Z)$ would be a self-concordant barrier for 
	$\cl\hypo(\tilde{h})$ with parameter less than $n_1+n_2+m$, contradicting 
	Proposition~\ref{prop:orthant-dom-lb}. 
\end{proof}

\subsection{Proofs of the corollaries of Theorems~\ref{thm:main1} and~\ref{thm:main2}}
\label{sec:pfassembly}

We now summarize how to establish the corollaries stated in the introduction
from Theorems~\ref{thm:main1} and~\ref{thm:main2}.

The corollaries of Theorem~\ref{thm:main1} stated in Section~\ref{sec:intro} involve specializing to 
particular choices of operator concave function $g$, and particular positive linear maps $\phi$. 
Furthermore, they give an explicit description of the closure of the epigraph, based on the 
results in Appendix~\ref{sec:domainpersp}. 
\begin{proof}[Proof of {Corollary~\ref{cor:DBS}}] 
	Applying Theorem~\ref{thm:main1} with the operator concave function $g(x) = \log(x)$ and 
	the positive linear map $\phi(X) = \tr(X)$ gives the barrier
		$ -\log(-D_{BS}(X|Y)-z) - \log\det(X)-\log\det(Y)$
		with (optimal) parameter $2n+1$ for 
	$\cl \{(X,Y,z)\in \S_{++}^n\times \S_{++}^n\times \RR\;:\; -D_{BS}(X|Y) \geq z\}$.
		Since $g(0)=-\infty$, the explicit description of this closure is given by
		item (ii) of Theorem~\ref{thm:maincl1}. 
	 Composing with the linear map $z\mapsto -z$ gives the barrier 
		$-\log(z-D_{BS}(X|Y)) - \log\det(X)-\log\det(Y)$
		for the set $\epi(D_{BS})$.
\end{proof}

\begin{proof}[Proof of {Corollary~\ref{cor:R}}] 
	Observe that 
	$\hat{Q}_\alpha(X|Y) = \phi(P_g(X,Y))$
	where $g(x) = x^{1-\alpha}$ and $\phi(X) = \tr(X)$. 
	
	If $\alpha\in [0,1]$ then $g(x) = x^{1-\alpha}$ is operator concave and so $\hat{Q}_{\alpha}$ is 
	concave. 
	Applying Theorem~\ref{thm:main1} gives the self-concordant barrier
	$-\log(z-\hat{Q}_{\alpha}(X|Y))-\log\det(X)-\log\det(Y)$
	with (optimal) parameter $2n+1$ for 
	\begin{multline*}
		\cl \{(X,Y,z)\in \S_{++}^n\times \S_{++}^n\times \RR\;:\; \hat{Q}_\alpha(X|Y) \geq z\} = \\
	\{(X,Y,z)\in \S_{+}^n\times \S_{+}^n\times \RR\;:\; \hat{Q}_\alpha(X|Y) \geq z\}.\end{multline*}
	Here, the last equality follows from item (i) of Theorem~\ref{thm:maincl1} since $g(0)=0 = \hat{g}(0)$. 

	If $\alpha\in [-1,0]\cup (1,2]$ then $x^{1-\alpha}$ is operator convex and so $\hat{Q}_\alpha$ is convex. 
	Applying Theorem~\ref{thm:main1} with the operator concave function $g(x) = -x^{1-\alpha}$
	gives the barrier
		$-\log(-\hat{Q}_\alpha(X|Y)-z) - \log\det(X)-\log\det(Y)$
		with (optimal) parameter $2n+1$ for 
	$\cl \{(X,Y,z)\in \S_{++}^n\times \S_{++}^n\times \RR\;:\; -\hat{Q}_\alpha(X|Y) \geq z\}$.
		If $\alpha\in (1,2]$ then $g(0^+)=-\infty$ and $\hat{g}(0) > -\infty$. Therefore 
		the explicit description of this closure is given by
		item (ii) of Theorem~\ref{thm:maincl1}. If $\alpha \in [-1,0)$ then $g(0) > -\infty$ and $\hat{g}(0^+) = -\infty$, and so the explicit description of the closure is given by item (iii) of Theorem~\ref{thm:maincl1}.
		Composing with the linear map $z\mapsto -z$ gives the barrier 
		$ -\log(z-\hat{Q}_\alpha(X|Y)) - \log\det(X)-\log\det(Y)$
		for the set $\epi(\hat{Q}_\alpha)$.
\end{proof}

Next we establish the corollaries of Theorem~\ref{thm:main2}, regarding the quantum relative entropy cone and the functions $Q_\alpha$. 
\begin{proof}[{Proof of Corollary~\ref{cor:qre}}]
	Consider the operator concave function $g(x) = \log(x)$ and 
	the positive linear map $\p:\S^{n^2}\rightarrow \RR$ 
	with the property that $\p(X\otimes \bar{Y}) = \tr(XY)$.
	Then $Q_g^\p(X|Y) = -D(X|Y)$. This is the case (see, e.g.,~\cite{effros2009matrix}) because
	\begin{align*}
		P_{\log}(X\otimes I, I\otimes \bar{Y}) & = (X\otimes I)^{1/2}\log((X\otimes I)^{-1/2}(I\otimes \bar{Y})(X\otimes I)^{-1/2})(X\otimes I)^{1/2}\\
		&= (X^{1/2}\otimes I) \log(X^{-1}\otimes \bar{Y}) (X^{1/2}\otimes I)\\
		&= -X\log(X)\otimes I + X \otimes \overline{\log(Y)}.
	\end{align*}
	Here we have used the fact that $(X\otimes I)^{\alpha} = X^{\alpha}\otimes I$ and that for positive definite matrices $A$ and $B$, $\log(A^{-1}\otimes B) = -\log(A)\otimes I + I\otimes \log(B)$. We have 
	also used the property that $\log(\bar{Y}) = \overline{\log(Y)}$ for any positive definite $Y$. 

	Applying Theorem~\ref{thm:main2} with this choice of $g$ and $\p$
	gives the barrier
		\[-\log(-D(X|Y)-z) - \log\det(X)-\log\det(Y)\]
		with (optimal) parameter $2n+1$ for 
	$\cl \{(X,Y,z)\in \S_{++}^n\times \S_{++}^n\times \RR\;:\; -D(X|Y) \geq z\}$.
		Since $g(0^+)=-\infty$ and $\hat{g}(0) = 0$, the explicit description of this closure is given by
		item (ii) of Theorem~\ref{thm:maincl2}. 
		Composing with the linear map $z\mapsto -z$ gives the barrier 
		$-\log(z-D(X|Y)) - \log\det(X)-\log\det(Y)$
		for the set $\epi(D)$.
\end{proof}

\begin{proof}[{Proof of Corollary~\ref{cor:Qalpha}}]
	Consider the function $g(x) = x^{1-\alpha}$ and the positive linear map
	$\p:\S^{n^2}\rightarrow \RR$ 
	with the property that $\p(X\otimes \bar{Y}) = \tr(XY)$.
	Then $Q_g^\p(X|Y) = Q_\alpha(X|Y)$. This is the case because 
	\begin{align*}
		P_{g}(X\otimes I, I\otimes \bar{Y}) & = (X\otimes I)^{1/2}((X\otimes I)^{-1/2}(I\otimes \bar{Y})(X\otimes I)^{-1/2})^{1-\alpha}(X\otimes I)^{1/2}\\
		& = (X^{1/2}\otimes I)(X^{-(1-\alpha)}\otimes \bar{Y}^{1-\alpha})(X^{1/2}\otimes I)
		= X^{\alpha}\otimes \bar{Y}^{1-\alpha}.
	\end{align*}
	Here we have used the fact that $(A\otimes B)^{\alpha} = A^{\alpha}\otimes B^{\alpha}$ for
	$A,B \in\S_{++}^n$.

	If $\alpha\in [0,1]$, $g(x) = x^{1-\alpha}$ is operator concave, so applying Theorem~\ref{thm:main2}
	gives 
	$-\log(Q_\alpha(X|Y) - z) - \log\det(X) - \log\det(Y)$
	as a barrier, with (optimal) parameter $2n+1$, for $\cl\{(X,Y,z)\in \S_{++}^n\times \S_{++}^n\times \RR\;:\; Q_\alpha(X|Y) \geq z\}$. 
	Since $g(0) = \hat{g}(0) = 0$, the explicit description of this closure is given by 
	item (i) of Theorem~\ref{thm:maincl2}. 

	If $\alpha\in [-1,0]\cup (1,2]$ then $x^{1-\alpha}$ is operator convex and so $Q_\alpha$ is convex. 
	Applying Theorem~\ref{thm:main2} with the operator concave function $g(x) = -x^{1-\alpha}$
	gives the barrier
		$-\log(-Q_\alpha(X|Y)-z) - \log\det(X)-\log\det(Y)$
		with (optimal) parameter $2n+1$ for 
	$\cl \{(X,Y,z)\in \S_{++}^n\times \S_{++}^n\times \RR\;:\; -Q_\alpha(X|Y) \geq z\}$.
		If $\alpha\in (1,2]$ then $g(0^+)=-\infty$ and $\hat{g}(0) > -\infty$. Therefore 
		the explicit description of this closure is given by
		item (ii) of Theorem~\ref{thm:maincl2}. If $\alpha \in [-1,0)$ then $g(0) > -\infty$ and $\hat{g}(0^+) = -\infty$, and so the explicit description of the closure is given by item (iii) of Theorem~\ref{thm:maincl2}.
		Composing with the linear map $z\mapsto -z$ gives the barrier 
		$-\log(z-Q_\alpha(X|Y)) - \log\det(X)-\log\det(Y)$
		for the set $\epi(Q_\alpha)$.
\end{proof}


\section{Discussion}
\label{sec:discussion}

We conclude by discussing natural questions related to self-concordant 
barriers for convex cones related to those studied in this paper, and topics for further research 
related to the quantum relative entropy cone.

\paragraph{Self-concordant barriers for generalizations of the Lieb-Ando functions}
For a fixed invertible $n\times n$ matrix $K$, let $p\geq q$ and let $s>0$. The functions 
\begin{equation}
	\label{eq:psi3}
	(A,B) \mapsto \tr\left[(B^{q/2}K^*A^{p}KB^{q/2})^{s}\right],
\end{equation}
defined on a pair of positive definite $n\times n$ matrices, are
\begin{itemize}
\item jointly concave if $0\leq q\leq p\leq 1$ and $0<s\leq 1/(p+q)$;
\item jointly convex if $-1\leq q\leq p \leq 0$ and $s>0$;
\item jointly convex if $-1\leq q\leq 0$, $1\leq p \leq 2$, $(p,q)\neq (1,-1)$ and $s\geq 1/(p+q)$.
\end{itemize}
This result, in its full generality, is due to Zhang~\cite{zhang2020wigner}. When $s=p+q=1$, 
the concave case is due to Lieb~\cite{lieb}, and the convex case is due to Ando~\cite{ando1979concavity}.
For general $s$, an important special case of these functions are the sandwiched R\'enyi divergences~\cite{MDSFT13,WWY13}
\[ (A,B) \mapsto \tr\left[(B^{\frac{1-t}{2t}}K^*AKB^{\frac{1-t}{2t}})^{t}\right]\]
which correspond to the case $p=1$, $q=1/t-1$, and $s=1/(p+q)=t$. These 
are jointly concave for $t\in [1/2,1]$ and jointly convex for $t\geq 1$ \cite{frank2013monotonicity,beigi2013sandwiched}.

It would be interesting to find efficiently computable 
optimal self-concordant barriers for the closures of the hypo/epigraphs of the functions defined in~\eqref{eq:psi3}
for the full range of parameters where they are concave/convex. 
We note that in the special cases where $-1\leq p\leq 1/2$, $q=1-p$ and $s=1$, efficiently computable
logarithmically homogeneous self-concordant barriers with parameter $2n+1$ for these cones can 
be obtained as a corollary of Theorem~\ref{thm:main2}. 

\paragraph{Trace functions}
Suppose $I\subseteq \RR$ is an interval and $f:I\to \RR$ is a concave function. The 
trace function $\tr f(X) = \sum_{i=1}^{n}f(\lambda_i(X))$, 
defined on $n\times n$ Hermitian matrices $X$ with eigenvalues in $I$ is also a concave function. It then 
follows that the (scalar) perspective $(x,Y)\mapsto \tr(xf(Y/x))$ is concave and positively
homogeneous of degree one. 
In the special case where $f:(0,\infty)\rightarrow \RR$ is operator concave, it follows from Theorem~\ref{thm:main2} that $-\log(\tr(xf(Y/x))-z) - \log\det(Y) - \log(x)$ is a self-concordant barrier 
with (optimal) barrier parameter $n+2$ for the closure of the hypograph of $(x,Y) \mapsto \tr(x f(Y/x))$.
To see why, observe that choosing $n_1=1$, $n_2=n$, and $\phi(A) = \tr(\bar{A})$ in Theorem~\ref{thm:main2} gives
\[ \phi(P_f(x\otimes I, 1 \otimes \bar{Y})) = \phi (P_f(xI,\bar{Y})) = \tr(\overline{xf(\bar{Y}/x)}) = \tr(xf(Y/x)).\]
This consequence of Theorem~\ref{thm:main2} generalizes~\cite[Proposition 1]{coey2022conic}, which deals with the special 
case where $f$ has operator monotone derivative.\footnote{A closely related (earlier) result of Faybusovich and Tsuchiya~\cite{faybusovich2017matrix} gives a self-concordant barrier with parameter $n+1$ for the epigraph of $X\mapsto \tr(f(X))$ (without the perspective), where $f$ has operator monotone derivative.} (This is a generalization because for a function $f:(0,\infty)\rightarrow \RR$, having operator monotone derivative implies being operator convex~\cite[Exercise V.3.14]{bhatia2013matrix}, but the converse does not hold in general, as the example $f(x) = x^{-1}$ shows.)
For general concave functions that are not operator concave, it is not clear how to construct efficiently 
computable optimal self-concordant barriers for the hypographs of the associated trace functions.

\paragraph{Conic optimization with the quantum relative entropy cone}
With the availability of both an optimal self-concordant barrier for the
quantum relative entropy cone, and software for optimization over
nonsymmetric cones that already implements this
barrier~\cite{coey2022performance,karimi2019domain,papp2022alfonso}, the time seems ripe for a more in-depth
study of conic optimization with respect to the quantum relative entropy cone.
This could include: studying the modeling power of lifted representations using
the quantum relative entropy cone, along the lines of the study of
lifted representations using the positive semidefinite
cone~\cite{fawzi2022lifting}; studying the facial structure of the quantum
relative entropy cone; and studying error bounds for the quantum relative
entropy cone, which would generalize the already subtle error bounds for the
exponential cone~\cite{lindstrom2020error}.

\appendix

\section{Integral representations of operator concave functions}
\label{sec:intrepoperatorconcave}

The following theorem gives an integral representation of operator monotone functions on $(0,\infty)$. 
\begin{theorem}[{L{\"o}wner's theorem, see \cite[Theorem 4]{logapprox}}]
\label{thm:intrepopmonotone}
If $h:(0,\infty) \to \RR$ is an operator monotone function, then there is a finite measure $\mu$ supported on $[0,1]$ such that
\begin{equation}
\label{eq:intrepopmonotone}
h(x) = h(1) + \int_{0}^{1} \frac{x-1}{1+s(x-1)} d\mu (s)  \qquad \forall x > 0.
\end{equation}
\end{theorem}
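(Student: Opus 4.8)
The plan is to treat this as a classical fact and, for a self-contained account, to derive it from the analytic description of operator monotone functions. Two standard inputs are needed. First, L\"owner's theorem in analytic form: a function $h$ on $(0,\infty)$ is operator monotone if and only if it extends to a holomorphic function on $\CC\setminus(-\infty,0]$ carrying the open upper half-plane into its closure (a \emph{Pick function}); in particular $h$ is real-analytic on $(0,\infty)$. Second, the Nevanlinna--Herglotz representation of Pick functions: such an $h$ admits a representation
\[
h(z) = \alpha + \beta z + \int_{\RR}\left(\frac{1}{\lambda-z} - \frac{\lambda}{1+\lambda^2}\right)d\sigma(\lambda),
\]
with $\alpha\in\RR$, $\beta\ge 0$, and $\sigma$ a positive Borel measure satisfying $\int_{\RR}(1+\lambda^2)^{-1}\,d\sigma(\lambda)<\infty$; and since $h$ continues holomorphically across $(0,\infty)$, the measure $\sigma$ is supported in $(-\infty,0]$.

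With these in hand, the remaining work is a change of variables turning the Nevanlinna kernel into $k_s(x):=\frac{x-1}{1+s(x-1)}$ and upgrading the integrability condition to finiteness of the representing measure on $[0,1]$. I would first pass to $h(x)-h(1)$, so that $\alpha$ and the subtraction terms $\lambda/(1+\lambda^2)$ cancel, leaving, via the identity $\frac{1}{\lambda-x}-\frac{1}{\lambda-1}=\frac{x-1}{(x-\lambda)(1-\lambda)}$,
\[
h(x)-h(1) = \beta(x-1) + \int_{(-\infty,0]}\frac{x-1}{(x-\lambda)(1-\lambda)}\,d\sigma(\lambda).
\]
Substituting $s = 1/(1-\lambda)\in[0,1]$ for $\lambda\in(-\infty,0]$ (so $\lambda=0$ corresponds to $s=1$ and $\lambda\to-\infty$ to $s\to 0$), a direct computation gives $\frac{x-1}{(x-\lambda)(1-\lambda)} = s^2\,k_s(x)$, while the linear term is $\beta(x-1)=\beta\,k_0(x)$. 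Thus, defining $\mu$ as $\beta\,\delta_0$ plus the push-forward of $s^2\,d\sigma$ under $\lambda\mapsto s$, we obtain precisely $h(x)=h(1)+\int_{[0,1]}k_s(x)\,d\mu(s)$, which is \eqref{eq:intrepopmonotone}. This is where the hypothesis is used: on $(-\infty,0]$ the density $s^2=(1-\lambda)^{-2}$ satisfies $s^2\le 2(1+\lambda^2)^{-1}$, so $\int(1+\lambda^2)^{-1}\,d\sigma<\infty$ forces $\mu([0,1])<\infty$; the same estimate, together with the fact that $(x-\lambda)(1-\lambda)$ is bounded below by a positive $x$-dependent multiple of $1+\lambda^2$, makes the displayed integral absolutely convergent for each fixed $x>0$. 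Useful sanity checks: $k_s'(1)=1$ for every $s$, so $h'(1)=\mu([0,1])$; one recovers $\mu=\delta_0$ for $h(x)=x$, $\mu=\delta_1$ for $h(x)=-1/x$, and $\mu$ equal to Lebesgue measure for $h(x)=\log x$; and since each $k_s$ is operator monotone on $(0,\infty)$ (because $x\mapsto-(c+x)^{-1}$ is, with $c=(1-s)/s\ge 0$), the representation manifestly produces operator monotone functions, which settles the easy converse.

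The real obstacle is the first input --- L\"owner's theorem itself, that operator monotonicity, a condition on Hermitian matrices of \emph{every} size, already forces analytic continuation to a Pick function. I would not reprove it; the standard routes include exploiting positivity of all the divided-difference (Loewner) matrices $\bigl[(h(x_i)-h(x_j))/(x_i-x_j)\bigr]_{i,j}$ via a Hamburger/Hausdorff moment argument, a Krein--Milman analysis of a compact convex set of normalized operator monotone functions whose extreme points are essentially the kernels $k_s$, or the operator-Jensen approach of Hansen and Pedersen (see, e.g., \cite{bhatia2013matrix}); a proof of the statement in exactly the normalized form used here is given in \cite{logapprox}. Once L\"owner's theorem is granted, everything above is routine, the only remaining analytic point being convergence of the integral for each fixed $x>0$, which follows from the uniform bound $\sup_{s\in[0,1]}|k_s(x)|<\infty$.
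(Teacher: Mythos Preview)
Your proposal is correct, and the derivation via the analytic form of L\"owner's theorem and the Nevanlinna--Herglotz representation, followed by the change of variables $s=1/(1-\lambda)$, is the standard route; your checks on finiteness of $\mu$ and absolute convergence of the integral are accurate, as are the sanity-check examples.

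The paper, however, does not prove this statement at all: it is quoted as a known result (with a reference) and used only as input to obtain the integral representation of operator \emph{concave} functions via the identity $h(x)=-(g(x)-g(1))/(x-1)$. So there is no proof in the paper to compare against; your write-up simply supplies a self-contained argument where the paper is content to cite one. The one thing you rightly flag as a genuine black box---that operator monotonicity forces a Pick extension---is also treated as a black box in the paper's reference, so your level of detail is already more than the paper requires.
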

We can deduce from this theorem the following integral representation of operator concave functions.
\begin{theorem}
\label{thm:intrepopconcave}
If $g:(0,\infty) \to \RR$ is an operator concave function, then there is a finite measure $\mu$ supported on $[0,1]$ such that
\begin{equation}
\label{eq:intrepopconcave}
g(x) = g(1) + g'(1)(x-1) - \int_{0}^{1} \frac{(x-1)^2}{1+s(x-1)} d\mu (s)  \qquad \forall x > 0.
\end{equation}
\end{theorem}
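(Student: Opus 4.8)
The plan is to derive Theorem~\ref{thm:intrepopconcave} from L\"owner's integral representation of operator monotone functions (Theorem~\ref{thm:intrepopmonotone}) by relating operator concave functions to operator monotone ones. The key classical fact I would invoke is the following: a function $g:(0,\infty)\to\RR$ is operator concave if and only if the function $h(x) := g'(1)(x-1) - (g(x) - g(1))$ has a suitable monotonicity structure; more precisely, I would use the standard result that $g$ is operator concave on $(0,\infty)$ if and only if $x \mapsto (g(x)-g(1))/(x-1)$ (suitably extended at $x=1$) is operator \emph{monotone} on $(0,\infty)$. Alternatively — and this is the route I would actually take because it produces the exact form of~\eqref{eq:intrepopconcave} — I would use that $g$ operator concave implies $g'$ exists (analyticity already noted) and that $-g$ is operator convex, combined with the integral representation for operator convex functions.

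Concretely, here is the sequence of steps. First, observe that $g$ operator concave on $(0,\infty)$ means $-g$ is operator convex on $(0,\infty)$. Second, invoke the known integral representation of operator convex functions on $(0,\infty)$ (which is itself a consequence of L\"owner's theorem applied to the operator monotone function arising as a divided difference): any operator convex $f:(0,\infty)\to\RR$ can be written as
\begin{equation}
\label{eq:opconvexrep}
f(x) = f(1) + f'(1)(x-1) + \int_0^1 \frac{(x-1)^2}{1+s(x-1)}\, d\mu(s)
\end{equation}
for a finite positive measure $\mu$ on $[0,1]$. The cleanest way to establish~\eqref{eq:opconvexrep} from Theorem~\ref{thm:intrepopmonotone} is to note that if $f$ is operator convex with $f(1)=f'(1)=0$ (which we may assume after subtracting an affine function), then $\phi(x) := f(x)/(x-1)$ extends to an operator monotone function on $(0,\infty)$; applying~\eqref{eq:intrepopmonotone} to $\phi$ and multiplying through by $(x-1)$, while tracking the normalization $\phi(1) = f'(1) = 0$, yields exactly $f(x) = \int_0^1 \frac{(x-1)^2}{1+s(x-1)}\,d\mu(s)$. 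Third, apply this with $f = -g$: we get $-g(x) = -g(1) - g'(1)(x-1) + \int_0^1 \frac{(x-1)^2}{1+s(x-1)}\,d\mu(s)$, and rearranging gives precisely~\eqref{eq:intrepopconcave}.

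The main obstacle — and the step deserving the most care — is justifying that $\phi(x) = f(x)/(x-1)$ (for operator convex $f$ with $f(1)=f'(1)=0$) is genuinely operator monotone on all of $(0,\infty)$, including a correct treatment of the removable singularity at $x=1$ where $\phi(1) = f'(1)$. This is the classical bridge lemma between operator convexity and operator monotonicity (essentially \cite[Chapter V]{bhatia2013matrix} or Hansen--Pedersen), and I would either cite it directly or sketch it via the matrix identity expressing $\phi$ of a matrix argument in terms of $f$ of a related matrix, using that operator convexity of $f$ on $(0,\infty)$ with $f(0^+)$ possibly infinite still forces the divided-difference function to be operator monotone. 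One subtlety to flag: the integral representation holds with measure supported on $[0,1]$ precisely because we are on the half-line $(0,\infty)$ and normalizing at the interior point $x=1$; choosing a different base point would rescale the measure but not change the conclusion. Since Theorem~\ref{thm:intrep} in the main text already asserts exactly this representation and is stated to be proved in this appendix, the remaining task is genuinely just to assemble these classical ingredients cleanly, and I expect no essential difficulty beyond the bookkeeping around the point $x=1$.
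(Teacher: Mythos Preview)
Your proposal is correct and takes essentially the same approach as the paper: the paper's proof simply cites the fact that $g$ is operator concave if and only if $h(x) = -(g(x)-g(1))/(x-1)$ (with $h(1)=-g'(1)$) is operator monotone, and then applies Theorem~\ref{thm:intrepopmonotone} directly to $h$. Your route via $f=-g$ and the normalized divided difference $\phi(x)=\tilde f(x)/(x-1)$ is the same argument with an extra affine normalization; note only that in your first formulation the sign is off (for concave $g$ the divided difference is operator monotone \emph{decreasing}, so it is its negative that fits Theorem~\ref{thm:intrepopmonotone}), but your ``alternative'' route, which is the one you actually pursue, handles the sign correctly.
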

\begin{proof}
	It is known that $g(x)$ is operator concave if, and only if, the function $h(x)= -(g(x)-g(1))/(x-1)$ is operator monotone, see \cite[Corollary 2.7.8]{hiainotes}. (Here $h(1)$ is interpreted as $-g'(1)$ via taking the appropriate limit.) The result follows from applying Theorem \ref{thm:intrepopmonotone}.
\end{proof}

\subsection{Directional derivatives of noncommutative perspectives}

Let $g:(0,\infty)\to \RR$ be an operator concave function, and let
\[
P_g(X,Y) = X^{1/2} g(X^{-1/2} Y X^{-1/2}) X^{1/2},
\]
be its noncommutative perspective. For $s \in [0,1]$, let 
\begin{equation}
\label{eq:xisappnew123}
\xi_s(X,Y) = -(Y-X)(X+s(Y-X))^{-1}(Y-X)
\end{equation}
be the noncommutative perspective of $x\mapsto -\frac{(x-1)^2}{1+s(x-1)}$. It follows immediately from \eqref{eq:intrepopconcave} that
\begin{equation}
\label{eq:intrepPgXY123}
P_g(X,Y) = g(1) X + g'(1)(Y-X) + \int_{0}^{1} \xi_s(X,Y) d\mu(s).
\end{equation}

The next theorem shows that the directional derivatives of $P_g$ can be expressed as an integral of the directional derivatives of $\xi_s$.
\begin{theorem}
\label{thm:intrepdirectionalderiv}
	Let $g:(0,\infty)\to \RR$ be an operator concave function with representing measure $\mu$ in~\eqref{eq:intrepopconcave}. Let $P_g$ be its noncommutative perspective. Let $X,Y\in \S_{++}^n$ and $H,V\in \S^n$. If $D^k P_g(X,Y)[H,V]$ is the $k$th directional derivative of $P_g$ at $(X,Y)$ in the direction $(H,V)$ then, for any $k\geq 1$,
\begin{equation}
\label{eq:intrepdirectionalderiv}
	D^k P_g(X,Y)[H,V] =  \begin{cases} g(1)H + g'(1)(V-H) +  \int_{0}^{1} D \xi_s(X,Y)[H,V] \;d\mu(s) & \textup{if $k=1$}\\
	\int_{0}^{1} D^k \xi_s(X,Y)[H,V] \;d\mu(s) & \textup{if $k \geq 2$}.\end{cases}
\end{equation}
\end{theorem}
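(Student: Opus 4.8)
The plan is to justify differentiating under the integral sign in the representation \eqref{eq:intrepPgXY123}. Fix $X,Y\in\S_{++}^n$ and $H,V\in\S^n$, and consider the scalar variable $t$ in a neighbourhood of $0$ small enough that $X+tH\succ0$ and $Y+tV\succ0$; more precisely, using the argument of Lemma~\ref{lem:eta-bnd}, choose $\delta>0$ so that for $|t|\le\delta$ and all $s\in[0,1]$ one has $(1-s)(X+tH)+s(Y+tV)\succeq cI$ for some fixed $c>0$, and also $\|(X+tH)^{-1/2}(H)(X+tH)^{-1/2}\|$ and the analogous quantity for $Y$ are bounded by a fixed $\eta$. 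From \eqref{eq:intrepPgXY123} evaluated at $(X+tH,Y+tV)$ we get
\[
P_g(X+tH,Y+tV) = g(1)(X+tH) + g'(1)\big((Y-X)+t(V-H)\big) + \int_0^1 \xi_s(X+tH,Y+tV)\,d\mu(s).
\]
The first two terms are affine in $t$, so all their derivatives of order $\ge2$ vanish and the claimed formula for those terms is immediate; the $k=1$ case simply records the contribution $g(1)H+g'(1)(V-H)$ from these affine terms. So everything reduces to showing that the map $t\mapsto \int_0^1 \xi_s(X+tH,Y+tV)\,d\mu(s)$ is $C^k$ on $(-\delta,\delta)$ with
\[
\frac{d^k}{dt^k}\Big|_{t=0}\int_0^1 \xi_s(X+tH,Y+tV)\,d\mu(s) = \int_0^1 \frac{d^k}{dt^k}\Big|_{t=0}\xi_s(X+tH,Y+tV)\,d\mu(s) = \int_0^1 D^k\xi_s(X,Y)[H,V]\,d\mu(s).
\]

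The key step is a uniform-in-$s$ bound on the derivatives $\frac{d^j}{dt^j}\xi_s(X+tH,Y+tV)$ for $0\le j\le k$ and $|t|\le\delta$, which then lets us invoke the standard theorem on differentiating a parametrized integral (dominated convergence applied to difference quotients, iterated $k$ times). Here I would use the explicit formula for $D^k\xi_s$ recorded in Lemma~\ref{lem:diff-facts} / in the proof of Lemma~\ref{lem:compat1}: writing $\Ybar_t=(1-s)(X+tH)+s(Y+tV)$, $\Vbar=(1-s)H+sV$, $\Bbar_t=\Ybar_t^{-1/2}\Vbar\Ybar_t^{-1/2}$, one has for $k\ge2$
\[
D^k\xi_s(X+tH,Y+tV)[H,V] = -k!\begin{bmatrix}\Ybar_t^{-1/2}(Y-X+t(V-H))\\ \Ybar_t^{-1/2}(V-H)\end{bmatrix}^{\!*}\!\begin{bmatrix}(-\Bbar_t)^k & (-\Bbar_t)^{k-1}\\ (-\Bbar_t)^{k-1} & (-\Bbar_t)^{k-2}\end{bmatrix}\!\begin{bmatrix}\Ybar_t^{-1/2}(Y-X+t(V-H))\\ \Ybar_t^{-1/2}(V-H)\end{bmatrix}.
\]
Since $\Ybar_t\succeq cI$ uniformly in $s$ and $|t|\le\delta$, we have $\|\Ybar_t^{-1/2}\|\le c^{-1/2}$, $\|\Bbar_t\|\le \eta$ by Lemma~\ref{lem:eta-bnd}, and $\|Y-X+t(V-H)\|$, $\|V-H\|$ are bounded by constants depending only on $X,Y,H,V,\delta$; hence $\|D^k\xi_s(X+tH,Y+tV)[H,V]\|$ is bounded by a constant $C_k$ independent of $s\in[0,1]$ and $|t|\le\delta$. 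Because $\mu$ is a finite measure on $[0,1]$, the constant function $C_k$ is $\mu$-integrable, which supplies the domination needed to pass $\frac{d^k}{dt^k}$ inside $\int_0^1(\cdot)\,d\mu(s)$. The $k=1$ case is handled the same way using the formula for $D\xi_s$.

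The main obstacle is purely the bookkeeping of the uniformity: one must check that the neighbourhood $(-\delta,\delta)$ and all the norm bounds can be chosen independently of $s\in[0,1]$, which is exactly what Lemma~\ref{lem:eta-bnd} and the positive-definiteness of convex combinations guarantee. Once that uniform bound is in hand, the interchange of $\int d\mu(s)$ and $\frac{d^k}{dt^k}$ is a routine application of the dominated-convergence form of the differentiation-under-the-integral theorem, applied inductively in $k$ (each differentiation step requires continuity of $t\mapsto D^k\xi_s(X+tH,Y+tV)[H,V]$ uniformly in $s$, which follows from the same explicit formula and the continuity of matrix inversion and square roots on the set $\{A:A\succeq cI\}$). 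Evaluating at $t=0$ and recombining with the affine terms gives \eqref{eq:intrepdirectionalderiv}.
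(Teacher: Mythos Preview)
Your proposal is correct and follows essentially the same strategy as the paper: write $P_g(X+tH,Y+tV)$ via the integral representation, then justify differentiating under the integral sign by exhibiting a uniform-in-$(s,t)$ bound on the derivatives of $\xi_s(X+tH,Y+tV)$ and invoking dominated convergence. The only minor difference is that you obtain the uniform bound by plugging into the explicit formula of Lemma~\ref{lem:diff-facts} together with Lemma~\ref{lem:eta-bnd}, whereas the paper argues more abstractly that $\zeta(t,s)=\xi_s(X+tH,Y+tV)$ extends to a $C^\infty$ function on an open neighbourhood of the compact set $[-\eps/2,\eps/2]\times[0,1]$, and hence all its derivatives are automatically bounded there; both routes yield the same conclusion.
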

\begin{proof}
Using the integral representation~\eqref{eq:intrepPgXY123}, for all $t$ such that $X+tH\pd 0$ and $Y+tV \pd 0$ we have
\[
\begin{aligned}
P_g(X+tH, Y+tV) &= g(1)(X+tH) + g'(1)(Y-X+t(V-H))\\
& \qquad + \int_{0}^{1} \xi_s(X+tH, Y+tV) d\mu(s).
\end{aligned}
\]
	By (repeatedly) differentiating the above identity with respect to $t$ we get:
\begin{equation}
\label{eq:DPgXYHVpre}
	D P_g(X, Y)[H,V] = g(1) H + g'(1)(V-H) + \left.\frac{d}{dt}\right|_{t=0} \int_{0}^{1} \xi_s(X+tH, Y+tV) d\mu(s)
\end{equation}
and, for $k\geq2$, 
\begin{equation}
	\label{eq:DPgkpre}
	D^k P_g(X, Y)[H,V] = \left.\frac{d^k}{dt^k}\right|_{t=0} \int_{0}^{1} \xi_s(X+tH, Y+tV) d\mu(s).
\end{equation}
Our main task is to justify that we can swap the order of integration and differentiation in the right-hand sides of the equations above.  By an application of the dominated convergence theorem (see, e.g.,~\cite[Corollary 5.9]{bartle2014elements}), it is enough to show that, for each $k\geq 1$, the partial derivative $(\partial^k/\partial t^k) \zeta(t,s)$ of $\zeta(t,s) = \xi_s(X+tH,Y+tV)$ is uniformly bounded in a neighborhood of $t=0$ for all $s \in [0,1]$, i.e., there are constants $\eps>0$ and  $C_k > 0$ such that $\|(\partial^k/\partial t^k) \zeta(t,s)\| \leq C_k$ for all $(t,s) \in [-\eps,\eps] \times [0,1]$.

We proceed as follows. Since $X,Y \pd 0$, there exist $\eps > 0$ and $\delta > 0$ such that $X+tH, Y+tV \psd \delta I$ for all $t \in [-\eps,\eps]$. Hence we can write
\[
(1-s)(X+tH) + s(Y+tV) \; \psd \; \delta I \;\; \forall (t,s) \in [-\eps,\eps] \times [0,1].
\]
By continuity of $A\mapsto \lambda_{\min}(A)$, the above will also hold when $s$ is in an open neighborhood of $[0,1]$, i.e., there is a small enough $\eta > 0$ such that:
\[
(1-s)(X+tH) + s(Y+tV) \; \psd \; (\delta/2) I \;\; \forall (t,s) \in [-\eps,\eps] \times [-\eta,1+\eta].
\]
Since
\[
\begin{aligned}
	&\zeta(t,s) = \xi_s(X+tH,Y+tV)\\
&= -(Y-X + t(V-H))\Bigl((1-s)(X+tH) + s(Y+tV)\Bigr)^{-1} (Y-X + t(V-H))
\end{aligned}
\]
	this shows that $\zeta(t,s)$ is well-defined and $C^{\infty}$ on $[-\eps,\eps]\times [-\eta,1+\eta]$. Thus this means that all its derivatives are bounded on $[-\eps/2,\eps/2] \times [0,1]$, which is what we need to (repeatedly) swap the integration and differentiation operations. Hence from \eqref{eq:DPgXYHVpre} we get
\begin{equation}
\label{eq:DPgXYHVafter}
	D P_g(X, Y)[H,V] = g(1) H + g'(1)(V-H) + \int_{0}^{1} D \xi_s(X, Y)[H,V] d\mu(s)
\end{equation}
and, for $k\geq 2$, 
\[ D^k P_g(X, Y)[H,V] = \int_{0}^{1} D^k \xi_s(X, Y)[H,V] d\mu(s),\]
as desired.
\end{proof}

\begin{remark}
\label{rem:intrepdirectionalderivtensor}
If we let $\Gamma(X,Y) = P_g(X\otimes I, I \otimes \bar Y)$ and $G_s(X,Y) = \xi_s(X\otimes I, I \otimes \bar Y)$, then it follows from \eqref{eq:intrepdirectionalderiv} that for $k\geq 2$
\[
D^k \Gamma(X,Y) [H,V] = \int_{0}^{1} D^k G_s(X,Y)[H,V] d\mu(s)
\]
since $D^k \Gamma(X,Y) [H,V] = D^k P_g(X\otimes I, I\otimes \bar Y)[H\otimes I, I \otimes \bar V]$, and $D^k G_s(X,Y)[H,V] = D^k\xi_s(X\otimes I, I \otimes \bar Y)[H\otimes I, I \otimes \bar V]$.
\end{remark}

We conclude the section with formulas for the directional derivatives of the $\xi_s$. 
\begin{lemma}
	\label{lem:diff-facts}
	For $s\in [0,1]$ let $\xi_s$ be as defined in \eqref{eq:xisappnew123}.
	Let $X,Y\in \S_{++}^n$ and $H,V\in \S^n$ be fixed matrices, and let $Y_s = (1-s)X+sY$, $V_s = (1-s)H+sV$, and $B_s = Y_s^{-1/2}V_sY_s^{-1/2}$. Then for $k \geq 1$,
	\[
		D^k \xi_s(X,Y)[H,V] = -k{!}\begin{bmatrix} Y_s^{-\frac{1}{2}}(Y-X)\\ Y_s^{-\frac{1}{2}}(V-H) \end{bmatrix}^* \begin{bmatrix} (-B_s)^k & (-B_s)^{k-1}\\ (-B_s)^{k-1} & W_k \end{bmatrix} \begin{bmatrix} Y_s^{-\frac{1}{2}}(Y-X)\\ Y_s^{-\frac{1}{2}}(V-H) \end{bmatrix}
	\]
	where
	\[
		W_k = \begin{cases} 0 & \text{ if } k=1\\
			(-B_s)^{k-2} & \text{ if } k \geq 2.
 	\end{cases}
	\]
\end{lemma}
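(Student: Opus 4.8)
The plan is to expand the univariate function $t\mapsto \xi_s(X+tH,Y+tV)$ as a convergent power series about $t=0$ and to read off $D^k\xi_s(X,Y)[H,V]$ as $k!$ times the coefficient of $t^k$. The first step is to substitute and simplify. With $Y_s = (1-s)X+sY$ and $V_s = (1-s)H+sV$ one has $(X+tH)+s\bigl((Y+tV)-(X+tH)\bigr) = Y_s+tV_s$ and $(Y+tV)-(X+tH) = (Y-X)+t(V-H)$, so for $|t|$ small enough that $Y_s+tV_s \succ 0$,
\[
\xi_s(X+tH,Y+tV) = -\bigl((Y-X)+t(V-H)\bigr)\,(Y_s+tV_s)^{-1}\,\bigl((Y-X)+t(V-H)\bigr).
\]
Using $(Y_s+tV_s)^{-1} = Y_s^{-1/2}(I+tB_s)^{-1}Y_s^{-1/2}$ with $B_s = Y_s^{-1/2}V_sY_s^{-1/2}$, and setting $C_0 = Y_s^{-1/2}(Y-X)$ and $C_1 = Y_s^{-1/2}(V-H)$ (so that $(Y-X)Y_s^{-1/2} = C_0^*$ and $(V-H)Y_s^{-1/2} = C_1^*$, since $Y-X$, $V-H$, $Y_s^{-1/2}$ are Hermitian), this collapses to
\[
\xi_s(X+tH,Y+tV) = -(C_0+tC_1)^*\,(I+tB_s)^{-1}\,(C_0+tC_1).
\]

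Next I would expand the resolvent as a Neumann series $(I+tB_s)^{-1} = \sum_{j\ge 0}(-1)^jt^jB_s^j$, valid and operator-norm convergent for $|t|$ small, and multiply out $(C_0^*+tC_1^*)\bigl(\sum_{j\ge 0}(-1)^jt^jB_s^j\bigr)(C_0+tC_1)$. Matching powers of $t$, the coefficient of $t^k$ receives $C_0^*(-B_s)^kC_0$ from the $j=k$ term, $C_0^*(-B_s)^{k-1}C_1 + C_1^*(-B_s)^{k-1}C_0$ from $j=k-1$, and (only when $k\ge 2$) $C_1^*(-B_s)^{k-2}C_1$ from $j=k-2$, where I have used $(-1)^{k-2} = (-1)^k$. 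Since $t\mapsto \xi_s(X+tH,Y+tV)$ is a convergent power series near $0$, multiplying this coefficient by $-k!$ yields $D^k\xi_s(X,Y)[H,V]$, and re-assembling the four terms as a $2\times 2$ block bilinear form in the vector $\begin{bmatrix}Y_s^{-1/2}(Y-X)\\ Y_s^{-1/2}(V-H)\end{bmatrix}$ gives exactly the claimed expression, with $W_k = 0$ for $k=1$ and $W_k = (-B_s)^{k-2}$ for $k\ge 2$.

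I do not expect a genuine obstacle here. The only steps warranting attention are the algebraic reduction to $-(C_0+tC_1)^*(I+tB_s)^{-1}(C_0+tC_1)$ — in particular the factorization $(Y_s+tV_s)^{-1} = Y_s^{-1/2}(I+tB_s)^{-1}Y_s^{-1/2}$ (which uses $Y_s\succ 0$, valid for $s\in[0,1]$) and the Hermitian-adjoint bookkeeping on the left factor — together with the observation that the Neumann series converges on a neighbourhood of $t=0$, so that extracting the $t^k$ coefficient, equivalently differentiating $k$ times at $0$, is legitimate. Everything else is routine matching of powers of $t$. One could instead argue by induction on $k$, differentiating $-(C_0+tC_1)^*(I+tB_s)^{-1}(C_0+tC_1)$ via $\tfrac{d}{dt}(I+tB_s)^{-1} = -(I+tB_s)^{-1}B_s(I+tB_s)^{-1}$ and checking that the block pattern propagates, but the power-series route is shorter.
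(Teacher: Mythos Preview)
Your proposal is correct and follows essentially the same route as the paper: expand $\xi_s(X+tH,Y+tV)$ via the factorization $(Y_s+tV_s)^{-1}=Y_s^{-1/2}(I+tB_s)^{-1}Y_s^{-1/2}$ and the Neumann series $\sum_{j\ge 0}t^j(-B_s)^j$, then read off the coefficient of $t^k$ and multiply by $-k!$. The paper's proof is essentially your argument without the abbreviations $C_0,C_1$.
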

\begin{proof}
	Recall that $D^k\xi_s(X,Y)[H,V]$ is $k{!}$ times the coefficient of $t^k$ in the Taylor expansion
	of $\xi_s(X+tH,Y+tV)$ about $t=0$. For sufficiently small $t$, 
	\begin{align*}
		\xi_s&(X+tH,Y+tV) = -(Y-X+t(V-H))(\Ybar + t\Vbar)^{-1}(Y-X+t(V-H))\\
		& = -(Y-X+t(V-H))\Ybar^{-1/2}(I+t\Bbar)^{-1}\Ybar^{-1/2}(Y-X+t(V-H))\\
		& = -(Y-X+t(V-H))\Ybar^{-1/2}\left(\sum_{k=0}^{\infty}t^{k}(-\Bbar)^k\right)\Ybar^{-1/2}(Y-X+t(V-H)).
	\end{align*}
	If $k\geq 2$, the coefficient of $t^k$ is then
	\begin{multline*}
		-(Y-X)\Ybar^{-1/2}(-\Bbar)^{k}\Ybar^{-1/2}(Y-X) - (Y-X)\Ybar^{-1/2}(-\Bbar)^{k-1}\Ybar^{-1/2}(V-H)\\
		-(V-H)\Ybar^{-1/2}(-\Bbar)^{k-1}\Ybar^{-1/2}(Y-X)
		- (V-H)\Ybar^{-1/2}(-\Bbar)^{k-2}\Ybar^{-1/2}(V-H).
	\end{multline*}
	(If $k=1$ then the last term is replaced by zero.)
	We obtain the stated expressions for $D^k\xi_s(X,Y)[H,V]$ by 
	rewriting in matrix form and multiplying by $k{!}$. 
\end{proof}


\section{Domains of perspectives of operator concave functions}
\label{sec:domainpersp}

Let $g:(0,\infty)\to \RR$ be an operator concave function, and let $P_g$ be its noncommutative perspective
$P_g(X,Y) = X^{1/2} g(X^{-1/2} Y X^{-1/2}) X^{1/2}$ 
defined for positive definite matrices $X$ and $Y$.
The purpose of this section is to give an explicit description of the closure of the hypograph of $P_g$, and related functions.

Recall that to $g$ we can associate the transpose function $\hat{g}(x) = xg(1/x)$ defined on $(0,\infty)$ which satisfies $P_g(X,Y) = P_{\hat g}(Y,X)$ for all $X,Y \pd 0$. The closure of the hypograph of $P_g$ will depend on the behavior of $g$ and $\hat{g}$ as $x\to 0$. We write $g(0^+) = \lim_{x\to 0} g(x)$ and similarly for $\hat{g}(0^+)$. Note that since $g$ and $\hat{g}$ are concave functions we have $g(0^+),\hat{g}(0^+) \in \RR\cup \{-\infty\}$.

The following are the two main theorems of this appendix.
\begin{theorem}
\label{thm:maincl1}
	Assume $g:(0,\infty) \to \RR$ is operator concave, and let $P_g$ be its noncommutative perspective.
	Let $\phi:\S^n\to \S^m$ be a positive linear map, and let
	\[
	\cD = \{(X,Y) \in \S^n_+ \times \S^n_+ : \phi(P_g(X+\epsilon I, Y+\epsilon I)) \text{ is bounded below as } \epsilon\downarrow 0\}.
	\]
	Then for $(X,Y) \in \cD$, $\lim_{\epsilon\downarrow0} \phi(P_g(X+\epsilon I, Y+\epsilon I))$ exists, and, for $X,Y\pd 0$ coincides with $\phi(P_g(X,Y))$.
	Extending $\phi \circ P_g$ to $\cD$ in this way, we have
	\begin{multline*}
	\cl \{(X,Y,Z) \in \S^n_{++} \times \S^n_{++} \times \S^m : \phi(P_g(X,Y)) \psd Z\}\\ = \{(X,Y,Z) \in \cD \times \S^n : \phi(P_g(X,Y)) \psd Z\}.
	\end{multline*}
	
	Furthermore, assuming $\phi$ satisfies\footnote{This condition is obviously satisfied e.g., for the linear maps $\phi(X)=X$ and $\phi(X)=\tr X$.}
	\begin{equation}
	\label{eq:assumptionphi}
	\exists c > 0 : \tr \phi(X) \geq c \tr X \qquad \forall X \psd 0,
	\end{equation}
	the set $\cD$ is equal to the following, according to the limits $g(0^+)$ and $\hat g(0^+)$:
	
\noindent (i) If $g(0^+) > -\infty$ and $\hat{g}(0^+) > -\infty$, then $\cD = \S^n_+ \times \S^n_+$\\
(ii) If $g(0^+) = -\infty$ and $\hat{g}(0^+) > -\infty$ then $\cD = \{(X,Y) \in \S^n_+ \times \S^n_+ : X \ll Y\}$\\
(iii) If $g(0^+) > -\infty$ and $\hat{g}(0^+) = -\infty$ then $\cD = \{(X,Y) \in \S^n_+ \times \S^n_+ : Y \ll X\}$\\
(iv) If $g(0^+) =  \hat{g}(0^+)=-\infty$ then $\cD = \{(X,Y) \in \S^n_+ \times \S^n_+ : \ker(X) = \ker(Y)\}$.
\end{theorem}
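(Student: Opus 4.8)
The plan is to treat the two assertions of the theorem separately. For the first assertion, I would use that $P_g$ is positively homogeneous of degree one and superadditive~\eqref{eq:supadd2}, with $P_g(I,I)=g(1)I$. Applying superadditivity to $X+\epsilon'I=(X+\epsilon I)+(\epsilon'-\epsilon)I$ (and likewise in the second slot), for $0<\epsilon\le\epsilon'$, shows that $\epsilon\mapsto P_g(X+\epsilon I,Y+\epsilon I)-\epsilon g(1)I$ is nondecreasing in the Loewner order. Composing with the positive---hence order-preserving and continuous---linear map $\phi$, the family $\phi(P_g(X+\epsilon I,Y+\epsilon I))-\epsilon g(1)\phi(I)$ is monotone in $\S^m$ as $\epsilon\downarrow0$, so it converges if and only if it is bounded below; since the correction term vanishes, this is precisely the condition defining $\cD$, giving existence of $\lim_{\epsilon\downarrow0}\phi(P_g(X+\epsilon I,Y+\epsilon I))$ on $\cD$, and its agreement with $\phi(P_g(X,Y))$ when $X,Y\pd0$ is continuity of $P_g$ on $\S^n_{++}\times\S^n_{++}$. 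Writing $\Phi$ for the extended function, the same monotonicity gives the pointwise bound $\phi(P_g(X+\epsilon I,Y+\epsilon I))\psd\Phi(X,Y)+\epsilon g(1)\phi(I)$ for all $\epsilon>0$. The closure identity then follows by standard arguments: one direction is by approximating a point $(X,Y,Z)$ of the right-hand set by $(X+\epsilon I,Y+\epsilon I,Z-\delta_\epsilon I)$ with $\delta_\epsilon\downarrow0$ chosen to absorb the vanishing gap below $Z$; closedness of the right-hand set follows by passing to the limit in the pointwise bound, first along an approximating sequence and then in $\epsilon$, using continuity of $P_g$ on the positive-definite cone.

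For the second assertion I would first reduce membership in $\cD$ to finiteness of a scalar integral. Using the integral representation~\eqref{eq:intrepopconcave}, $\phi(P_g(X+\epsilon I,Y+\epsilon I))$ equals a convergent affine term minus $\phi(B_\epsilon)$, where $B_\epsilon=\int_0^1 N(Y_s+\epsilon I)^{-1}N\,d\mu(s)\psd0$, with $N=Y-X$ and $Y_s=(1-s)X+sY$. Positivity of $\phi$ gives $\|\phi(B_\epsilon)\|\le\tr\phi(B_\epsilon)$, and~\eqref{eq:assumptionphi} makes $\tr\phi(B_\epsilon)$ comparable to $\tr B_\epsilon$; since $\tr B_\epsilon=\int_0^1\tr[N(Y_s+\epsilon I)^{-1}N]\,d\mu(s)$ is nondecreasing as $\epsilon\downarrow0$, the monotone convergence theorem gives
\[
(X,Y)\in\cD\iff\int_0^1\ell_s(X,Y)\,d\mu(s)<\infty,\qquad\ell_s(X,Y):=\lim_{\epsilon\downarrow0}\tr[N(Y_s+\epsilon I)^{-1}N],
\]
where $\ell_s(X,Y)=\tr[NY_s^+N]$ if $\operatorname{ran}(N)\subseteq\operatorname{ran}(Y_s)$ and $\ell_s(X,Y)=+\infty$ otherwise. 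Alongside this I would record the dictionary $g(0^+)=-\infty\iff\int_0^1(1-s)^{-1}\,d\mu(s)=\infty$ and $\hat g(0^+)=-\infty\iff\int_0^1 s^{-1}\,d\mu(s)=\infty$ (so in particular finiteness of either integral precludes an atom of $\mu$ at the corresponding endpoint), obtained by taking the appropriate limit in~\eqref{eq:intrepopconcave}.

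It remains to match finiteness of $\int_0^1\ell_s\,d\mu$ with the four cases. Since $\ker(Y_s)=\ker(X)\cap\ker(Y)\subseteq\ker(N)$ for $s\in(0,1)$, $\ell_s$ is finite on the open interval, so only the behaviour as $s\to0,1$ and atoms of $\mu$ at $0,1$ matter. I would establish: (a) a uniform bound $\ell_s(X,Y)\le A/s+B/(1-s)$ on $(0,1)$, coming from $Y_s\psd\tfrac{c}{2}\min\{(1-s)\lambda_{\min}^{+}(X),\,s\lambda_{\min}^{+}(Y)\}\,P$, where $\lambda_{\min}^{+}$ denotes the smallest nonzero eigenvalue, $P$ is the orthogonal projection onto $\operatorname{ran}(X)+\operatorname{ran}(Y)\supseteq\operatorname{ran}(N)$, and $c>0$ depends only on those two subspaces; (b) if $X\ll Y$ then $\operatorname{ran}(N)\subseteq\operatorname{ran}(Y)=\operatorname{ran}(Y_s)$ for all $s\in(0,1]$, so $s\mapsto\tr[NY_s^+N]$ extends continuously to $s=1$ and stays bounded near it (symmetrically at $s=0$ when $Y\ll X$); and (c) a matching lower bound: if $X\not\ll Y$, choose a unit vector $v_0\in\ker(Y)$ with $\langle v_0,Xv_0\rangle>0$ (possible precisely because $X\not\ll Y$); then, since $\langle v_0,Nv_0\rangle=-\langle v_0,Xv_0\rangle$ and $\langle v_0,(Y_s+\epsilon I)v_0\rangle=(1-s)\langle v_0,Xv_0\rangle+\epsilon$, the chain $\tr[N(Y_s+\epsilon I)^{-1}N]\ge\langle Nv_0,(Y_s+\epsilon I)^{-1}Nv_0\rangle\ge|\langle v_0,Nv_0\rangle|^2/\langle v_0,(Y_s+\epsilon I)v_0\rangle$ together with $\epsilon\downarrow0$ give $\ell_s(X,Y)\ge\langle v_0,Xv_0\rangle/(1-s)$ on $(0,1)$ (and symmetrically $\ell_s\ge\langle w_0,Yw_0\rangle/s$ for a suitable $w_0\in\ker(X)$ when $Y\not\ll X$). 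Feeding (a)--(c) and the dictionary into $\int_0^1\ell_s\,d\mu$ then yields the four descriptions of $\cD$. I expect (c) to be the main obstacle: a single-test-vector bound does not a priori reveal that the divergence is genuinely of order $(1-s)^{-1}$, and the resolution is to put the test vector inside $\ker(Y)$ so that it annihilates the $sY$-part of $Y_s$; some care with endpoint atoms of $\mu$ is also needed throughout.
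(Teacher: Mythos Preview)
Your proposal is correct, and both halves take a genuinely different route from the paper.

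\textbf{First assertion.} The paper proves a general lemma (Theorem~\ref{thm:clmatrix}) about $\S^m_+$-concave functions $\xi$ on open convex domains: for each $v$, the scalar concave function $v^*\xi(\cdot)v$ has the standard Rockafellar closure, and one recovers entrywise convergence from~\eqref{eq:ReHermitian}--\eqref{eq:ImHermitian}. Your argument bypasses this by exploiting the extra structure of $P_g$: superadditivity and homogeneity give directly that $\epsilon\mapsto \phi(P_g(X+\epsilon I,Y+\epsilon I))-\epsilon g(1)\phi(I)$ is Loewner-nonincreasing as $\epsilon\downarrow 0$, hence convergent iff bounded below. The resulting pointwise bound $\phi(P_g(X+\epsilon I,Y+\epsilon I))\psd \Phi(X,Y)+\epsilon g(1)\phi(I)$ also makes your closedness argument (pass to the limit first in $k$ using continuity of $P_g$ on $\S_{++}^n\times\S_{++}^n$, then in $\epsilon$) shorter than the paper's scalarization step. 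Your approach buys brevity here at the cost of being specific to perspectives; the paper's lemma applies to any $\S^m_+$-concave function.

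\textbf{Second assertion.} The paper does not go through the representation~\eqref{eq:intrepopconcave}. Instead it uses a different integral formula (from Hiai) to show that when $g(0^+)>-\infty$ the map $X\mapsto P_g(X,Y)-g(0^+)X$ is Loewner-monotone (Lemma~\ref{lem:monotone1}), and then handles the ``bounded below'' directions by reducing to commuting pairs (Corollaries~\ref{cor:caseg0finite}--\ref{cor:caseg0ghat0finite}, Lemma~\ref{lem:caseg0ghat0infinite}); the ``unbounded'' direction (Lemma~\ref{lem:unbounded33}) is obtained via the operator Jensen inequality applied to a single unit vector $v\in\ker(Y)$ with $v^*Xv>0$. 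Your route---rewriting $\phi(P_g)$ as an affine term minus $\phi(B_\epsilon)$ with $B_\epsilon\psd 0$, using~\eqref{eq:assumptionphi} to compare $\tr\phi(B_\epsilon)$ with $\tr B_\epsilon$, and reducing membership in $\cD$ to finiteness of $\int_0^1 \ell_s\,d\mu$---is more uniform and entirely elementary: the upper bound~(a) comes from $P_X+P_Y\psd cP$, the refinement~(b) from $Y_s\psd sY$ when $X\ll Y$, and the lower bound~(c) from Cauchy--Schwarz with the same test vector $v_0\in\ker(Y)$ that the paper uses in Lemma~\ref{lem:unbounded33}. Your dictionary linking $g(0^+),\hat g(0^+)$ to $\int(1-s)^{-1}d\mu,\int s^{-1}d\mu$ is correct and makes the four-case split transparent.

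Two small points to tidy in a full write-up: the bound in~(a) tacitly assumes $X\neq 0$ and $Y\neq 0$ (so that $\lambda_{\min}^+$ makes sense), but the degenerate cases are immediate since then $Y_s=sY$ or $(1-s)X$; and for the divergence direction with an atom of $\mu$ at $s=1$, note that your chain in~(c) at $s=1$ gives $\tr[N(Y+\epsilon I)^{-1}N]\ge \langle v_0,Xv_0\rangle^2/\epsilon\to\infty$, so $\ell_1=\infty$, which handles the atom cleanly.
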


\begin{remark}
We note that although $P_g$ is continuous on $\S^n_{++} \times \S^n_{++}$, it is in general \emph{not} continuous on the domain $\cD$. 
	Consider the case $g(x) = x^{1/2}$ for which $g(0) = \hat{g}(0) = 0$. Thus $\lim_{\epsilon \downarrow 0} P_g(X+\epsilon I, Y+\epsilon I)$ is well-defined for all $X,Y \psd 0$. If $v,w \in \CC^n$ are unit normed, one can show that (see e.g., \cite[Remark 2.3]{fawzi2021defining})
\[
P_g(vv^*,ww^*) = \lim_{\epsilon \downarrow 0} P_g(vv^* + \epsilon I, ww^* + \epsilon I) = \begin{cases} 0 & \text{ if } v\neq w\\
vv^* & \text{ otherwise.}
\end{cases}
\]

\end{remark}

The second main result concerns functions of the form $\phi(P_g(X\otimes I, I\otimes \bar Y))$.

\begin{theorem}
\label{thm:maincl2}
Let $g:(0,\infty)\to \RR$ be an operator concave function, and let $P_g$ be its noncommutative perspective.
Let $\p:\S^{n^2}\to \RR$ be the positive linear map such that $\p(X\otimes \bar Y) = \tr(XY)$ for all $X,Y \in \S^n$, and define
\[
Q_g(X|Y) = \p(P_g(X\otimes I, I \otimes \bar Y))
	\qquad\textup{for $X,Y \pd 0$}.
	\]
Let
	$
	\cD = \{(X,Y) \in \S^n_+ \times \S^n_+ : Q_g(X+\epsilon I|Y+\epsilon I) \text{ is bounded below as } \epsilon\downarrow 0\}.
	$
	 Then for $(X,Y) \in \cD$, $\lim_{\epsilon\downarrow0} Q_g(X+\epsilon I| Y+\epsilon I)$ exists, and, for $X,Y\pd 0$,  coincides with $Q_g(X|Y)$.
	Extending $Q_g$ to $\cD$ in this way, we have
	\[
	\cl \{(X,Y,z) \in \S^n_{++} \times \S^n_{++} \times \RR : Q_g(X|Y) \geq z\} = \{(X,Y,z) \in \cD \times \RR : Q_g(X|Y) \geq z\}.
	\]

Furthermore, according to the limits $g(0^+)$ and $\hat g(0^+)$, the set $\cD$ is given by:
	
\noindent (i) If $g(0^+) > -\infty$ and $\hat{g}(0^+) > -\infty$, then $\cD = \S^n_+ \times \S^n_+$\\
(ii) If $g(0^+) = -\infty$ and $\hat{g}(0^+) > -\infty$ then $\cD = \{(X,Y) \in \S^n_+ \times \S^n_+ : X \ll Y\}$\\
(iii) If $g(0^+) > -\infty$ and $\hat{g}(0^+) = -\infty$ then $\cD = \{(X,Y) \in \S^n_+ \times \S^n_+ : Y \ll X\}$\\
(iv) If $g(0^+) =\hat{g}(0^+) =  -\infty$ then $\cD = \{(X,Y) \in \S^n_+ \times \S^n_+ : \ker(X) = \ker(Y)\}$.
\end{theorem}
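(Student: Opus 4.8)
The plan is to exploit the fact that $X\otimes I$ and $I\otimes\bar Y$ commute --- which is also why this statement is not a special case of Theorem~\ref{thm:maincl1}, since the map $\p$ does not satisfy~\eqref{eq:assumptionphi}. Diagonalising these two matrices simultaneously, with $X=\sum_i\lambda_iu_iu_i^*$ and $Y=\sum_j\mu_jw_jw_j^*$, I would first derive $P_g(X\otimes I,I\otimes\bar Y)=\sum_{i,j}\lambda_ig(\mu_j/\lambda_i)\,(u_iu_i^*)\otimes\overline{(w_jw_j^*)}$, so that~\eqref{eq:propp} yields the scalar formula $Q_g(X|Y)=\sum_{i,j}\lambda_i\,g(\mu_j/\lambda_i)\,|\langle u_i,w_j\rangle|^2$, valid for $X,Y\pd0$. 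The whole argument is organised around this identity.

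\textbf{The limit and the closure identity.} For fixed $X,Y\in\S^n_+$ and $\epsilon_1>\epsilon_2>0$, writing $(X+\epsilon_1I)\otimes I=(X+\epsilon_2I)\otimes I+(\epsilon_1-\epsilon_2)I$ and likewise in the second argument, superadditivity~\eqref{eq:supadd2} of $P_g$ together with $P_g(cI,cI)=cg(1)I$, positivity of $\p$, and $\p(I)=n$ shows that $\epsilon\mapsto Q_g(X+\epsilon I|Y+\epsilon I)-\epsilon g(1)n$ is nondecreasing. Hence $\overline{Q}_g(X|Y):=\lim_{\epsilon\downarrow0}Q_g(X+\epsilon I|Y+\epsilon I)$ exists in $[-\infty,+\infty)$, equals $\inf_{\epsilon>0}\big(Q_g(X+\epsilon I|Y+\epsilon I)-\epsilon g(1)n\big)$, coincides with $Q_g(X|Y)$ on the positive definite cone by continuity, and $\cD=\{(X,Y):\overline{Q}_g(X|Y)>-\infty\}$. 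Being an infimum of functions that are continuous on $\S^n_+\times\S^n_+$, $\overline{Q}_g$ is upper semicontinuous; this gives the inclusion ``$\subseteq$'' of the closure identity, since if $(X_k,Y_k,z_k)\to(X,Y,z)$ with $X_k,Y_k\pd0$ and $Q_g(X_k|Y_k)\geq z_k$, then $\overline{Q}_g(X|Y)\geq\limsup_k\overline{Q}_g(X_k|Y_k)\geq\lim_kz_k=z$, so $(X,Y)\in\cD$. For ``$\supseteq$'' I would use that $(X,Y)\in\cD$ and $\overline{Q}_g(X|Y)\geq z$ imply $Q_g(X+\epsilon I|Y+\epsilon I)\geq\overline{Q}_g(X|Y)+\epsilon g(1)n\geq z+\epsilon g(1)n$ for all $\epsilon>0$, so $(X+\epsilon I,Y+\epsilon I,z+\epsilon g(1)n)$ lies in the left-hand set and converges to $(X,Y,z)$. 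This part parallels the corresponding argument in Theorem~\ref{thm:maincl1}.

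\textbf{Identifying $\cD$.} Substituting $X+\epsilon I,Y+\epsilon I$ into the scalar formula (same eigenvectors, eigenvalues shifted by $\epsilon$) and letting $\epsilon\downarrow0$ summand by summand: a term with $\lambda_i,\mu_j>0$ tends to $\lambda_ig(\mu_j/\lambda_i)$; one with $\lambda_i>0,\mu_j=0$ tends to $\lambda_ig(0^+)$; one with $\lambda_i=0,\mu_j>0$ tends to $\mu_j\hat g(0^+)$ after rewriting $\epsilon\,g\big((\mu_j+\epsilon)/\epsilon\big)=(\mu_j+\epsilon)\hat g\big(\epsilon/(\mu_j+\epsilon)\big)$; and one with $\lambda_i=\mu_j=0$ tends to $0$. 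Since $g$ and $\hat g$ are concave, $g(0^+),\hat g(0^+)\in\RR\cup\{-\infty\}$, so no term tends to $+\infty$; the sum being finite, $\overline{Q}_g(X|Y)$ is the (extended-real) sum of these limits, hence finite unless some term tends to $-\infty$. That occurs exactly when $g(0^+)=-\infty$ and $\langle u_i,w_j\rangle\neq0$ for some $\lambda_i>0$ and $\mu_j=0$ --- equivalently $\mathrm{range}(X)\not\subseteq\mathrm{range}(Y)$, i.e.\ $X\not\ll Y$ --- or when $\hat g(0^+)=-\infty$ and $Y\not\ll X$. So $(X,Y)\in\cD$ if and only if both of the following hold: $X\ll Y$ or $g(0^+)>-\infty$; and $Y\ll X$ or $\hat g(0^+)>-\infty$. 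This unpacks to the four cases (i)--(iv), using that $X\ll Y$ together with $Y\ll X$ says exactly $\ker X=\ker Y$.

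\textbf{The main obstacle.} The step I expect to need the most care is the termwise passage to the limit in the scalar formula: it is legitimate because the sum is finite, the overall limit is already known to exist (from the monotonicity above), and no summand can diverge to $+\infty$, so there is no $(+\infty)-(-\infty)$ cancellation --- but all of these points should be stated explicitly rather than taken for granted. A secondary point requiring care is the dictionary between the combinatorial overlap condition and the relations $X\not\ll Y$ (resp.\ $Y\not\ll X$), which rests on $\mathrm{range}(X)=(\ker X)^\perp$ and $\mathrm{range}(Y)=(\ker Y)^\perp$.
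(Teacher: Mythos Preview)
Your proof is correct, and for the identification of $\cD$ it proceeds along essentially the same lines as the paper (Lemma~\ref{lem:Qdom}): both exploit the simultaneous diagonalisation of $X\otimes I$ and $I\otimes\bar Y$ to reduce $Q_g(X+\epsilon I\,|\,Y+\epsilon I)$ to a finite sum of scalar perspectives weighted by $|\langle u_i,w_j\rangle|^2$ (equivalently $\tr(P_iQ_j)$ in the paper's spectral-projector notation), and then take termwise limits. Where you depart from the paper is in the closure identity. The paper obtains it by invoking the general Theorem~\ref{thm:clmatrix} (a Rockafellar-style result for $\S^m_+$-concave functions, applied via the interior point $(I,I)$ and homogeneity), whereas you give a direct argument: superadditivity~\eqref{eq:supadd2} of $P_g$ combined with positivity of $\p$ yields monotonicity of $\epsilon\mapsto Q_g(X+\epsilon I\,|\,Y+\epsilon I)-\epsilon g(1)n$, which immediately gives existence of the limit and exhibits $\overline{Q}_g$ as an infimum of continuous functions, hence upper semicontinuous. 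Your route is more self-contained and avoids appealing to the abstract closure machinery; the paper's route has the advantage of settling the closure question once, at the level of arbitrary concave functions, and then specialising. Your unified treatment of cases (i)--(iv) via the single dichotomy ``$X\ll Y$ or $g(0^+)>-\infty$, and $Y\ll X$ or $\hat g(0^+)>-\infty$'' is also somewhat cleaner than the paper's case-by-case handling.
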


\subsection{Preliminaries}

We recall here some results from convex analysis needed to establish Theorems \ref{thm:maincl1} and \ref{thm:maincl2}.

First, it is well-known that if $g:(0,1)\to \RR$ is concave then $\lim_{\epsilon\downarrow 0} g(\epsilon)$ exists in $\RR \cup \{-\infty\}$. A corollary of the above to matrix concave functions is the following:
\begin{proposition}
Let $\xi:(0,1)\to \S^m$ be $\S^m_+$-concave. Then $\lim_{\epsilon\downarrow0} \xi(\epsilon)$ exists if, and only if, $\xi(\epsilon)$ is bounded below (in the positive semidefinite sense) as $\epsilon\downarrow 0$.
\end{proposition}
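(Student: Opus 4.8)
The plan is to bootstrap from the scalar fact just quoted --- that a concave function on $(0,1)$ has a limit in $\RR\cup\{-\infty\}$ at $0^+$ --- by testing $\xi$ against fixed vectors. The forward implication is immediate: if $\lim_{\epsilon\downarrow0}\xi(\epsilon)=L$ exists, then $\xi$ is bounded on a neighbourhood of $0$, hence in particular bounded below in the positive semidefinite order.

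For the converse, suppose $\xi(\epsilon)\psd -CI$ for all $\epsilon$ in some interval $(0,\delta_0)$, with $C\geq 0$. First I would fix $v\in\CC^m$ and set $f_v(\epsilon)=\langle v,\xi(\epsilon)v\rangle$. Since the functional $A\mapsto\langle v,Av\rangle$ is nonnegative on $\S^m_+$, applying it to the inequality defining $\S^m_+$-concavity of $\xi$ shows that $f_v$ is a concave scalar function on $(0,1)$; and $f_v(\epsilon)\geq -C\|v\|^2$ on $(0,\delta_0)$ shows it is bounded below near $0$. By the quoted scalar fact, $\lim_{\epsilon\downarrow0}f_v(\epsilon)$ exists in $\RR\cup\{-\infty\}$, and boundedness below forces this limit to be finite. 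Next I would recover the matrix limit by polarization: for each pair $(i,j)$ the entry $\langle e_i,\xi(\epsilon)e_j\rangle$ is a fixed real linear combination of quantities $f_w(\epsilon)$ with $w$ ranging over $\{e_i,\,e_j,\,e_i+e_j,\,e_i+\mathrm{i}e_j\}$, so each entry of $\xi(\epsilon)$ converges as $\epsilon\downarrow0$. Thus $\xi(\epsilon)$ converges entrywise to a matrix $L$, which is Hermitian as a limit of Hermitian matrices, and since $\S^m$ is finite-dimensional, entrywise convergence is convergence in any norm. Hence $\lim_{\epsilon\downarrow0}\xi(\epsilon)=L$ exists, as desired.

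I do not expect a serious obstacle. The one point that needs attention is the step where a scalar limit, a priori only known to exist in $\RR\cup\{-\infty\}$, is upgraded to a finite limit: this is exactly where the bounded-below hypothesis is used, and it is what makes the ``if'' direction nontrivial. The polarization bookkeeping (distinguishing the real-symmetric case from the complex-Hermitian case, and checking that both the real and imaginary parts of each off-diagonal entry are controlled) is routine but should be spelled out carefully.
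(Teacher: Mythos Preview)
Your proposal is correct and follows essentially the same approach as the paper: both reduce to the scalar concave case by testing against fixed vectors $v$, use boundedness below to upgrade the one-sided limit from $\RR\cup\{-\infty\}$ to $\RR$, and then recover entrywise convergence via polarization using the vectors $e_i$, $e_j$, $e_i+e_j$, and $e_i\pm \mathrm{i}e_j$. The paper writes out the explicit polarization identities for the real and imaginary parts of each entry, but the logic is identical to yours.
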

\begin{proof}
The implication $\implies$ is obviously true.
	Assume conversely that $\xi(\epsilon)$ is bounded below as $\epsilon\downarrow0$. Then for any $v \in \CC^n$, the same is true for the concave scalar-valued functions $\xi_v(\epsilon) = v^* \xi(\epsilon) v$, and thus $\lim_{\epsilon\downarrow0} \xi_v(\epsilon)$ is finite for all $v \in \CC^n$. This means that the entries of $\xi(\epsilon)$ are all convergent since the $(p,q)$ entries of the real and imaginary parts of $\xi(\epsilon)$ satisfy, respectively,
\begin{align}
\label{eq:ReHermitian}
	\text{Re} \, \xi(\epsilon)_{pq} &= \frac{1}{2}\left[(e_p+e_q)^* \xi(\epsilon) (e_p+e_q) - e_p^* \xi(\epsilon) e_p - e_q^* \xi(\epsilon) e_q\right]\quad\textup{and}\\
\label{eq:ImHermitian}
	\text{Im} \, \xi(\epsilon)_{pq} &= \frac{1}{2} \left[(e_p-ie_q)^* \xi(\epsilon) (e_p-ie_q) - e_p^* \xi(\epsilon) e_p - e_q^* \xi(\epsilon) e_q\right]
\end{align}
	where $e_p$ is the $p$th standard basis vector in $\CC^m$.
\end{proof}

Recall that if $g$ is a concave function, then the \emph{closure} of $g$ is the function whose hypograph is the closure of the hypograph of $g$ \cite[Page 52]{rockafellar}; alternatively it is the pointwise smallest, upper semi-continuous function that upper bounds $g$. The following fact about the closure of concave functions $g$ will be important to us.

\begin{theorem}[{\cite[Theorem 7.5]{rockafellar}}]
\label{thm:clscalar}
Let $g:C\to \RR$ be a concave function defined on an open convex set $C \subset \RR^n$. Let $e$ be an arbitrary point in $C$ and let 
\[
\cD = \{x \in \cl C : g( (1-\epsilon) x + \epsilon e ) \text{ bounded below as } \epsilon\downarrow0 \}.
\]
Then for $x \in \cD$, $\lim_{\epsilon\downarrow0} g( (1-\epsilon) x + \epsilon e )$ exists and coincides with $g(x)$ when $x \in C$. Extending $g$ to $\cD$ in this way, we have
\[
\cl \left\{ (x,z) \in C \times \RR : g(x) \geq z \right\} = \left\{(x,z) \in \cD\times \RR : g(x) \geq z\right\}.
\]
\end{theorem}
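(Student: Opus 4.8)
The statement is Rockafellar's Theorem~7.5 specialized to concave functions, and the plan is to reduce everything to a one-dimensional analysis along the segments through the fixed point $e$. First I would record two standard facts: (a) the \emph{line-segment principle} --- since $C$ is open convex, $e\in C$, and $x\in\cl C$, the point $y_\epsilon := (1-\epsilon)x+\epsilon e$ lies in $C$ for every $\epsilon\in(0,1]$; and (b) a finite concave function on an open convex set is continuous there. Fixing $x\in\cl C$ and setting $\phi(\epsilon)=g(y_\epsilon)$ for $\epsilon\in(0,1]$, the map $\epsilon\mapsto y_\epsilon$ is affine and $g$ is concave, so $\phi$ is a finite concave function on $(0,1]$. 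By the well-known fact recalled just before the theorem, $\lim_{\epsilon\downarrow 0}\phi(\epsilon)$ then exists in $\RR\cup\{-\infty\}$, and it is finite precisely when $\phi$ is bounded below. This gives the first assertion (existence of the limit for $x\in\cD$) and shows that $\cD$ and the extension of $g$ to $\cD$ are well defined. When $x\in C$, fact (b) gives $\phi(\epsilon)\to g(x)$ as $\epsilon\downarrow 0$, so $C\subseteq\cD$ and the extension agrees with $g$ on $C$, as claimed.

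Next I would prove the inclusion $\cl\{(x,z)\in C\times\RR : g(x)\geq z\}\subseteq\{(x,z)\in\cD\times\RR : g(x)\geq z\}$. Take $(x,z)$ in the left-hand side, realized as a limit of points $(x_k,z_k)$ with $x_k\in C$ and $g(x_k)\geq z_k$. For each fixed $\epsilon\in(0,1]$, concavity of $g$ on $C$ gives $g\big((1-\epsilon)x_k+\epsilon e\big)\geq (1-\epsilon)g(x_k)+\epsilon g(e)\geq (1-\epsilon)z_k+\epsilon g(e)$; letting $k\to\infty$ and using continuity of $g$ at $y_\epsilon\in C$ yields $g(y_\epsilon)\geq (1-\epsilon)z+\epsilon g(e)$. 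Since the right-hand side converges to $z$ as $\epsilon\downarrow 0$, it is bounded below near $0$, hence $\phi$ is bounded below, i.e. $x\in\cD$; and letting $\epsilon\downarrow 0$ gives (the extended) $g(x)=\lim_\epsilon\phi(\epsilon)\geq z$. So $(x,z)$ lies in the right-hand side.

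For the reverse inclusion, take $x\in\cD$ and $z\leq g(x)=\lim_{\epsilon\downarrow 0}g(y_\epsilon)$. If $z<g(x)$, then $g(y_\epsilon)>z$ for all small $\epsilon$, so $(y_\epsilon,z)$ lies in the hypograph of $g$ over $C$ and converges to $(x,z)$. If $z=g(x)$, then $(y_\epsilon,g(y_\epsilon))$ lies in that hypograph and converges to $(x,g(x))=(x,z)$ because $g(y_\epsilon)\to g(x)$. In both cases $(x,z)\in\cl\{(x,z)\in C\times\RR : g(x)\geq z\}$, which completes the proof.

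On difficulty: this theorem is not deep, and essentially every step is bookkeeping once facts (a) and (b) are in hand. The only direction with any real content is the first inclusion, where one combines concavity --- to transport the hypograph inequality along the segment through $e$ --- with continuity of finite concave functions at the interior point $y_\epsilon$; the main thing to be careful about is that $y_\epsilon$ genuinely stays inside the \emph{open} set $C$ for all $\epsilon\in(0,1]$, which is exactly what the line-segment principle guarantees, and that ``bounded below'' for the scalar concave function $\phi$ coincides with finiteness of its limit at $0$.
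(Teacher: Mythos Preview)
Your proof is correct. In the compiled paper this theorem carries no proof (it is simply cited from Rockafellar), but the source contains a commented-out argument, and that sketch handles the forward inclusion $\cl\{(x,z)\in C\times\RR:g(x)\geq z\}\subseteq\{(x,z)\in\cD\times\RR:g(x)\geq z\}$ by a somewhat different mechanism: rather than passing the concavity inequality $g((1-\epsilon)x_k+\epsilon e)\geq(1-\epsilon)g(x_k)+\epsilon g(e)$ to the limit in $k$ and invoking continuity of $g$ at the interior point $y_\epsilon$ as you do, it shows directly that the right-hand set is closed by applying the line-segment principle to the convex set $\hypo(g)$ itself, using that $(e,\alpha)\in\interior\hypo(g)$ for any $\alpha<g(e)$, so that the segment from $(e,\alpha)$ to any point of $\cl\hypo(g)$ lies in $\interior\hypo(g)$. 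Your route is a bit more elementary in that it stays at the level of the function and avoids interior-of-hypograph considerations; the paper's route is more geometric but needs an extra word on why $\limsup_k g(x_k)<\infty$. For the reverse inclusion both arguments build essentially the same approximating sequence along $y_\epsilon$; the paper's single choice $z_k=g(y_{1/k})+z-g(x)\leq g(y_{1/k})$ handles your two cases $z<g(x)$ and $z=g(x)$ at once.
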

\if0
\begin{proof}
Let's prove that the right hand side is closed.
Let $(x_k,z_k)$ be a sequence in $\cD \times \RR$ such that $g(x_k) \geq z$ and $x_k \to x$ and $z_k \to z$.
We show that $x \in \cD$.

Let $\beta = \limsup_{k} g(x_k) < +\infty$ since $g$ is concave. Also note that $\beta \geq z > -\infty$.

By definition, $(x,\beta) \in \cl \hypo(g)$.
Furthermore for any $\alpha < g(e)$, we have $(e,\alpha) \in \interior \hypo(g)$.
Hence for any $0 < \epsilon \leq 1$ we have
\[
( (1-\epsilon) x + \epsilon e , (1-\epsilon) \beta + \epsilon \alpha ) \in \interior \hypo(g)
\]
so that
\[
g( (1-\epsilon)x + \epsilon e) > (1-\epsilon) \beta + \epsilon \alpha.
\]
This shows that $g$ is bounded below as $\epsilon \downarrow0$.
Thus $x \in \cD$, and $\lim_{\epsilon\downarrow0} g((1-\epsilon) x + \epsilon e) \geq \beta \geq z$ as desired.

To finish we need to show that $\{(x,z) \in \cD\times \RR : g(x) \geq z\}$ is indeed the closure of $\{(x,z) \in C \times \RR : g(x) \geq z\}$. This is obvious, as it suffices to consider for $(x,z)$ in the former, the sequence $(x_k,z_k)$ in the latter defined by $x_k =  (1-k^{-1}) x + k^{-1} e$ and $z_k = g(x_k)+z-g(x) \leq g(x_k)$.
\end{proof}
\fi

We now prove a general version dealing with $\S^n_+$-concave functions.
\begin{theorem}
\label{thm:clmatrix}
Let $\xi:C\to \S^m$ be a $\S^m_+$-concave function defined on an open convex set $C \subset \RR^n$. Let $e$ be an arbitrary point in $C$ and let \[
\cD = \{x \in \cl C : \xi( (1-\epsilon) x + \epsilon e ) \text{ bounded below as } \epsilon\downarrow0 \}.
\]
Then for $x \in \cD$, $\lim_{\epsilon\downarrow0} \xi( (1-\epsilon) x + \epsilon e )$ exists and coincides with $\xi(x)$ when $x \in C$. Extending $\xi$ to $\cD$ in this way, we have
\begin{equation}
\label{eq:clxZCSm}
\cl \left\{ (x,Z) \in C \times \S^m : \xi(x) \psd Z \right\} = \left\{(x,Z) \in \cD\times \S^m : \xi(x) \psd Z\right\}.
\end{equation}
\end{theorem}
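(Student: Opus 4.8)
The plan is to reduce the statement to the scalar case, Theorem~\ref{thm:clscalar}, by testing against vectors. For each $v \in \CC^m$ I would consider $\xi_v := v^*\xi(\cdot)v : C \to \RR$, which is finite and concave on the open convex set $C$, hence continuous; write $\cD_v := \{x \in \cl C : \xi_v((1-\epsilon)x + \epsilon e) \text{ is bounded below as } \epsilon\downarrow0\}$, and let $\overline{\xi_v}$ be the extension of $\xi_v$ to $\cD_v$ provided by Theorem~\ref{thm:clscalar} with base point $e$, so that $\cl\{(x,t)\in C\times\RR : \xi_v(x)\geq t\} = \{(x,t)\in\cD_v\times\RR : \overline{\xi_v}(x)\geq t\}$.

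The first and main step is to match the matrix domain with the scalar ones: I claim that $\cD = \bigcap_{v\in\CC^m}\cD_v$, that for $x\in\cD$ the limit $\xi(x):=\lim_{\epsilon\downarrow0}\xi((1-\epsilon)x+\epsilon e)$ exists, and that $v^*\xi(x)v = \overline{\xi_v}(x)$ for all $x\in\cD$ and all $v$. Indeed, if $x\in\cD$ then applying the Proposition preceding the theorem to the $\S^m_+$-concave map $\epsilon\mapsto\xi((1-\epsilon)x+\epsilon e)$ near $\epsilon=0$ shows the limit $\xi(x)$ exists, whence $v^*\xi(x)v = \lim_{\epsilon\downarrow0}\xi_v((1-\epsilon)x+\epsilon e) = \overline{\xi_v}(x)$ and in particular $x\in\cD_v$ for every $v$. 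Conversely, if $x\in\cD_v$ for all $v$, then each scalar concave function $\xi_v((1-\epsilon)x+\epsilon e)$ converges as $\epsilon\downarrow0$, and the polarization identities~\eqref{eq:ReHermitian}--\eqref{eq:ImHermitian} then force every entry of $\xi((1-\epsilon)x+\epsilon e)$ to converge, so this matrix is bounded below near $\epsilon=0$, i.e.\ $x\in\cD$; this is exactly the argument already used to prove the Proposition. Finally, for $x\in C$ continuity of $\xi$ (equivalently of each $\xi_v$) shows the limit equals $\xi(x)$, so the extension is consistent with the original function on $C$.

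With the domain identified, the two inclusions in~\eqref{eq:clxZCSm} are short. For $\supseteq$: given $(x,Z)$ with $x\in\cD$ and $\xi(x)\psd Z$, set $x_k = (1-1/k)x + (1/k)e$, which lies in $C$ because $C$ is open and convex, $e\in C$, and $x\in\cl C$; by the previous step $\xi(x_k)\to\xi(x)$, so $Z_k := Z + \xi(x_k) - \xi(x)$ satisfies $\xi(x_k)-Z_k = \xi(x)-Z\psd0$ and $(x_k,Z_k)\to(x,Z)$, exhibiting $(x,Z)$ in the closure. (This direction uses only the existence of the limit on $\cD$, not the scalar closure theorem.) For $\subseteq$: take $(x,Z)$ in the closure and a sequence $(x_k,Z_k)\to(x,Z)$ with $x_k\in C$ and $\xi(x_k)\psd Z_k$; then for each fixed $v$ the pair $(x_k, v^*Z_kv)$ lies in $\{(x',t)\in C\times\RR : \xi_v(x')\geq t\}$ and converges to $(x, v^*Zv)$, so Theorem~\ref{thm:clscalar} gives $x\in\cD_v$ and $\overline{\xi_v}(x)\geq v^*Zv$. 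Letting $v$ range over $\CC^m$ and using the domain identification yields $x\in\cD$ and $v^*\xi(x)v = \overline{\xi_v}(x)\geq v^*Zv$ for all $v$, i.e.\ $\xi(x)\psd Z$; hence $(x,Z)$ lies in the right-hand side of~\eqref{eq:clxZCSm}.

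I expect the genuine obstacle to be the domain identification $\cD=\bigcap_v\cD_v$ together with the compatibility of the extensions: the implication that a matrix-valued function bounded below in every scalar direction $v$ must be bounded below in the positive semidefinite order is not automatic, and here it holds only because concavity makes the entrywise/polarization argument available. Everything else is routine once the scalar Theorem~\ref{thm:clscalar} is applied direction by direction.
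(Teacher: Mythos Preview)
Your proposal is correct and follows essentially the same approach as the paper: reduce to the scalar case via $\xi_v(x)=v^*\xi(x)v$, invoke Theorem~\ref{thm:clscalar} direction by direction, and use the polarization identities~\eqref{eq:ReHermitian}--\eqref{eq:ImHermitian} to pass from scalar limits to the matrix limit; the approximating sequence $x_k=(1-1/k)x+(1/k)e$, $Z_k=Z+\xi(x_k)-\xi(x)$ is identical. The only organizational difference is that you front-load the identification $\cD=\bigcap_v\cD_v$ and then prove the two inclusions of~\eqref{eq:clxZCSm} separately, whereas the paper instead shows directly that the right-hand side is closed (taking sequences with $x_k\in\cD$ rather than $x_k\in C$) and then exhibits the same approximating sequence; both routes rely on the same ingredients.
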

\begin{proof}
First we show that the right hand side of \eqref{eq:clxZCSm} is closed.
Let $(x_k,Z_k) \in \cD \times \S^m$ be a sequence converging to $(x,Z)$ such that $\xi(x_k) \psd Z_k$. We show that necessarily $x \in \cD$, and $\xi(x) \psd Z$. For any $v \in \CC^m$, consider the real-valued concave function $\xi_v(x) = v^* \xi(x) v$. Applying Theorem \ref{thm:clscalar} to the function $\xi_v$ and the sequence $(x_k,z_k=v^* Z_k v) \to (x,v^* Z v)$ living in $\hypo(\xi_v)$, we get that $x \in \cD_v$ where
\[
\cD_v = \left\{ \bar x \in C : v^* \xi((1-\epsilon) \bar x + \epsilon e) v \text{ bounded below as } \epsilon \downarrow 0 \right\}
\]
and that 
\begin{equation}
\label{eq:limitvxiv>=vZv}
\lim_{\epsilon \downarrow 0} v^* \xi((1-\epsilon) x + \epsilon e) v \geq v^* Z v.
\end{equation}
Since $v^* \xi((1-\epsilon) x + \epsilon e) v$ has a limit as $\epsilon\downarrow 0$ for all $v$, it follows from \eqref{eq:ReHermitian} and \eqref{eq:ImHermitian} that the matrix $\xi((1-\epsilon) x + \epsilon e)$ has a limit as $\epsilon \downarrow 0$. Furthermore, from \eqref{eq:limitvxiv>=vZv} (true for all $v$) this limit is $\psd Z$.

Now we show that the right hand side is indeed the closure of the hypograph of the left-hand side. It suffices to take for $(x,Z) \in \cD \times \S^m$ such that $\xi(x) \psd Z$, the sequence $(x_k,Z_k)$ where $x_k = (1-k^{-1}) x + k^{-1} e$ and $Z_k = \xi(x_k) + Z- \xi(x) \nsd \xi(x_k)$.
\end{proof}

\subsection{Proof of Theorem \ref{thm:maincl1}}

The first part of the theorem is an immediate consequence of Theorem \ref{thm:clmatrix}, using the interior point $e=(I,I) \in \S^n_{++} \times \S^n_{++}$ and the homogeneity of $P_g$
\[
P_g( (1-\epsilon) (X,Y) + \epsilon (I,I) ) = (1-\epsilon) P_g(X+\frac{\epsilon}{1-\epsilon} I, Y+\frac{\epsilon}{1-\epsilon} I)
\]
which implies that $P_g( (1-\epsilon) (X,Y) + \epsilon (I,I) )$ has the same limit as $P_g(X+\epsilon I, Y + \epsilon I)$ when $\epsilon \downarrow 0$.

The second part of the theorem essentially follows from~\cite[Prop. 3.26--3.29]{hiai2017different}, where an explicit formula for the limits in terms of generalized inverses is also given. To make the paper self-contained, and because the proofs of the cited propositions are quite lengthy, we include proofs here as a sequence of short lemmas and corollaries.

\begin{lemma}
\label{lem:monotone1}
Let $g:(0,\infty)\to \RR$ be operator concave such that $g(0^+) > -\infty$. Then $(X,Y) \mapsto P_g(X,Y) - g(0^+) X$ is monotone in its first argument.
\end{lemma}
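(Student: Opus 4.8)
The plan is to transfer the problem to the transpose $\hat g$ of $g$ and thereby reduce it to the operator monotonicity of a single scalar function. Write $\gamma := g(0^+) \in \RR$ and set $\tilde g(t) := \hat g(t) - \gamma t$ on $(0,\infty)$, where $\hat g(t) = tg(1/t)$. Using $P_g(X,Y) = P_{\hat g}(Y,X)$ from~\eqref{eq:transposeid} together with $P_{\hat g}(Y,X) = Y^{1/2}\hat g(Y^{-1/2}XY^{-1/2})Y^{1/2}$ and $\gamma X = Y^{1/2}(\gamma\, Y^{-1/2}XY^{-1/2})Y^{1/2}$, we get, for $X,Y\pd 0$,
\[
P_g(X,Y) - \gamma X \;=\; Y^{1/2}\,\tilde g\bigl(Y^{-1/2}XY^{-1/2}\bigr)\,Y^{1/2}.
\]
Since for fixed $Y\pd 0$ the maps $X\mapsto Y^{-1/2}XY^{-1/2}$ and $A\mapsto Y^{1/2}AY^{1/2}$ are both order preserving, it suffices to prove that $\tilde g$ is \emph{operator monotone} on $(0,\infty)$; this immediately gives $X_1\psd X_2 \implies P_g(X_1,Y) - \gamma X_1 \psd P_g(X_2,Y) - \gamma X_2$, which is the asserted monotonicity in the first argument.

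Next I would record the two properties of $\tilde g$ that make this work. First, $\tilde g$ is operator concave: $\hat g$ is operator concave whenever $g$ is (recalled in the text preceding~\eqref{eq:transposeid}), and subtracting the affine function $t\mapsto \gamma t$ preserves operator concavity. Second, $\tilde g$ is (scalar) nondecreasing. Indeed $\lim_{t\to\infty}\hat g(t)/t = \lim_{t\to\infty} g(1/t) = g(0^+) = \gamma$; and since $\hat g$ is operator concave it is analytic (Theorem~\ref{thm:intrep}) with $\hat g'$ non-increasing, so $\hat g'(t)\downarrow \lim_{t\to\infty}\hat g(t)/t = \gamma$ as $t\to\infty$. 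Hence $\hat g'(t)\geq\gamma$ for all $t>0$, i.e.\ $\tilde g'(t) = \hat g'(t) - \gamma\geq 0$.

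The heart of the argument is then the fact that \emph{an operator concave, nondecreasing function $f$ on $(0,\infty)$ is operator monotone}, which I would prove from the integral representation $f(x) = f(1) + f'(1)(x-1) - \int_{[0,1]}\frac{(x-1)^2}{1+s(x-1)}\,d\mu(s)$ (Theorem~\ref{thm:intrep}). Differentiating this identity and letting $x\to\infty$, the hypothesis $f'\geq 0$ forces $\mu(\{0\})=0$ and $\int_{(0,1]} s^{-1}\,d\mu(s)\leq f'(1)<\infty$. Using the elementary identity $\frac{(x-1)^2}{1+s(x-1)} = \frac{x-1}{s} - \frac{1}{s}\cdot\frac{x-1}{1+s(x-1)}$ for $s>0$, together with the finiteness just established to split the integral, one obtains
\[
f(x) \;=\; f(1) + \Bigl(f'(1) - \int_{(0,1]}\tfrac{1}{s}\,d\mu(s)\Bigr)(x-1) + \int_{(0,1]}\frac{x-1}{1+s(x-1)}\,\frac{d\mu(s)}{s},
\]
which exhibits $f$ as a nonnegative superposition (against the finite measure $s^{-1}\,d\mu(s)$) of the operator monotone functions $x\mapsto x-1$ and $x\mapsto \frac{x-1}{1+s(x-1)}$, $s\in(0,1]$; hence $f$ is operator monotone (cf.~\eqref{eq:intrepopmonotone}). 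Applying this with $f=\tilde g$ finishes the proof.

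I expect the only genuinely delicate point to be this last step — justifying, from the integral representation, that a nondecreasing operator concave function is operator monotone, and in particular that $f'\geq 0$ indeed implies $\mu(\{0\})=0$ and $\int_{(0,1]}s^{-1}\,d\mu(s)<\infty$, so that the reorganization into Löwner form is legitimate. The passage through $\hat g$ and the verification that $\tilde g$ is operator concave and nondecreasing are routine unwindings of the perspective and transpose identities.
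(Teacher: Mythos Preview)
Your proof is correct but follows a genuinely different route from the paper. The paper invokes an integral representation of $g$ (taken from Hiai) adapted to the hypothesis $g(0^+)>-\infty$, namely
\[
g(x) = g(0^+) + ax - bx^2 - \int_{(0,\infty)}\Bigl(\tfrac{x}{1+s} - \tfrac{1}{1+sx^{-1}}\Bigr)\,d\mu(s),
\]
so that $P_g(X,Y) - g(0^+)X = aY - bYX^{-1}Y + \int_{(0,\infty)}\bigl[(X^{-1}+sY^{-1})^{-1} - \tfrac{Y}{1+s}\bigr]\,d\mu(s)$, and each term is visibly monotone in $X$. You instead pass to the transpose, write $P_g(X,Y)-\gamma X = Y^{1/2}\tilde g(Y^{-1/2}XY^{-1/2})Y^{1/2}$ with $\tilde g = \hat g - \gamma\,\mathrm{id}$, and reduce to showing $\tilde g$ is operator monotone via the general fact ``operator concave and nondecreasing on $(0,\infty)$ $\Rightarrow$ operator monotone,'' which you derive from the paper's own Theorem~\ref{thm:intrep}. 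Your analysis of the delicate point is sound: from $f'(x) = f'(1) - 2(x-1)\mu(\{0\}) - \int_{(0,1]}s^{-1}\bigl(1-(1+s(x-1))^{-2}\bigr)\,d\mu(s)$ and monotone convergence one does get $\mu(\{0\})=0$ and $\int_{(0,1]}s^{-1}\,d\mu(s)\leq f'(1)$, which legitimizes the split into L\"owner form.

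The paper's argument is shorter, at the price of importing an external integral representation not otherwise used. Your argument is somewhat longer but stays entirely within the tools the paper develops (Theorem~\ref{thm:intrep} and the transpose identity~\eqref{eq:transposeid}), and it isolates a clean standalone lemma---nondecreasing operator concave functions on $(0,\infty)$ are operator monotone---that is worth having in its own right.
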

\begin{proof}
The integral representation in \cite[Eq. (2.3)]{hiai2017different} tells us that
\[ g(x) = g(0^+) + ax -bx^2 - \int_{(0,\infty)}\frac{x}{1+s} - \frac{1}{1+sx^{-1}}\;d\mu(s)\]
where $a\in \RR$ and $b\geq 0$ and $\int_{(0,\infty)}(1+s)^{-2}\;d\mu(s)<\infty$. Therefore 
	\[ P_g(X,Y) = g(0^+)X + aY - bYX^{-1}Y - \int_{(0,\infty)} \frac{1}{1+s}Y - (X^{-1}+sY^{-1})^{-1}\;d\mu(s).\]
	It follows that $P_g(X,Y) - g(0^+)X$ is monotone in its first argument.
\end{proof}

\begin{corollary}
\label{cor:caseg0finite}
Let $g:(0,\infty)\to \RR$ be operator concave with $g(0^+) > -\infty$. If $(X,Y) \in \S^n_+ \times \S^n_+$ satisfy $Y \ll X$, then $P_g(X+\epsilon I, Y + \epsilon I)$ is bounded below as $\epsilon \downarrow 0$.
\end{corollary}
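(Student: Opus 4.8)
The plan is to reduce to the case where the first argument of $P_g$ is positive definite, by peeling off the part supported on $\ker X$, and then to observe that on the remaining block $g$ is only ever evaluated on a fixed compact subinterval of $(0,\infty)$, where the hypothesis $g(0^+)>-\infty$ forces it to be bounded below. Concretely: in an orthonormal eigenbasis of $X$, write $X = \diag(X_0,0)$ with $X_0 \in \S^r_{++}$ and $r = \rank X$; since $\ker X \subseteq \ker Y$, the matrix $Y$ is supported on $(\ker X)^{\perp}$, so $Y = \diag(Y_0,0)$ with $Y_0 \in \S^r_+$. Then $X + \epsilon I$ and $Y + \epsilon I$ are simultaneously block-diagonal, and since $P_g$ respects this block structure,
\[
	P_g(X + \epsilon I,\, Y + \epsilon I) = \diag\!\bigl(P_g(X_0 + \epsilon I_r,\, Y_0 + \epsilon I_r),\ \epsilon\, g(1)\, I_{n-r}\bigr),
\]
using $P_g(\epsilon I, \epsilon I) = \epsilon g(1) I$ by homogeneity. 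The bottom block converges to $0$, so it is bounded below for $\epsilon \in (0,1]$, and it suffices to bound the top block below.

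For the top block I would use Lemma~\ref{lem:monotone1}: the map $Z \mapsto P_g(Z, Y_0 + \epsilon I_r) - g(0^+)Z$ is monotone, so from $X_0 + \epsilon I_r \psd X_0 \pd 0$ one gets $P_g(X_0 + \epsilon I_r, Y_0 + \epsilon I_r) \psd g(0^+)\epsilon I_r + P_g(X_0, Y_0 + \epsilon I_r)$; the first term is bounded below for $\epsilon \in (0,1]$, so it remains to bound $P_g(X_0, Y_0 + \epsilon I_r)$ below, with $X_0 \pd 0$ now \emph{fixed}. The eigenvalues of $M_\epsilon := X_0^{-1/2}(Y_0 + \epsilon I_r)X_0^{-1/2} \pd 0$ all lie in $(0, C_0]$ for $\epsilon \in (0,1]$, where $C_0 := \norm{X_0^{-1}}(\norm{Y_0}+1)$. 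Since $g$ is concave on $(0,\infty)$ with $g(0^+)>-\infty$, its graph lies above the chord from $(0^+, g(0^+))$ to $(C_0, g(C_0))$, so $g \geq m := \min\{g(0^+), g(C_0)\} > -\infty$ on $(0, C_0]$. Hence $g(M_\epsilon) \psd m I_r$, and therefore $P_g(X_0, Y_0 + \epsilon I_r) = X_0^{1/2} g(M_\epsilon) X_0^{1/2} \psd m X_0$, a fixed matrix. Assembling the bounds gives a fixed lower bound for $P_g(X + \epsilon I, Y + \epsilon I)$ valid for all $\epsilon \in (0,1]$.

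The one point requiring care is the possibly singular $X$: when $X$ is singular one cannot directly ``freeze'' the first argument of $P_g$ at $X$ in Lemma~\ref{lem:monotone1}, which is why the block split is performed first. The hypothesis $g(0^+)>-\infty$ is used precisely in the final step to control $g$ near the origin: if $Y_0$ is singular then $\lambda_{\min}(M_\epsilon) \to 0$ as $\epsilon \downarrow 0$, so some control of $g$ near $0$ is genuinely needed, and without it (e.g.\ for $g = \log$, where $g(0^+) = -\infty$) the conclusion fails, consistently with case~(ii) of Theorem~\ref{thm:maincl1}.
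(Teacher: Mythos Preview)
Your proof is correct. Both your argument and the paper's rely on Lemma~\ref{lem:monotone1}, but you deploy it differently. The paper observes that $Y\ll X$ gives a constant $c>0$ with $X\psd cY$, and then uses the monotonicity of $Z\mapsto P_g(Z,Y+\epsilon I)-g(0^+)Z$ to replace $X+\epsilon I$ by $cY+\epsilon I$ in the first slot; since $cY+\epsilon I$ and $Y+\epsilon I$ commute, the right-hand side diagonalizes in the eigenbasis of $Y$ and one reads off the explicit limit $c(g(1/c)-g(0^+))Y$. You instead first split off the kernel of $X$ to reduce to $X_0\pd 0$, then use the same monotonicity to freeze the first argument at $X_0$, and finally bound $g$ on the compact spectral range of $X_0^{-1/2}(Y_0+\epsilon I)X_0^{-1/2}$ via concavity and $g(0^+)>-\infty$. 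The paper's route is shorter and yields an explicit limit without a block decomposition; your route is a bit more hands-on but has the mild advantage that it makes transparent exactly where $g(0^+)>-\infty$ enters (controlling $g$ near $0$ when $Y_0$ is singular), as you note in your last paragraph.
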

\begin{proof}
Assume $Y \ll X$, i.e., there exists $c>0$ such that $X \psd cY$. By Lemma \ref{lem:monotone1}, $P_g(X,Y) - g(0^+)X$ is monotone in its first argument. Hence, 
\begin{equation}
\label{eq:lbPgXeI} 
P_g(X+\epsilon I, Y+\epsilon I)  - g(0^+)(X+\epsilon I)  \psd P_g(cY+\epsilon I, Y+\epsilon I) - g(0^+)(cY+\epsilon I).
\end{equation}
If $Y = \sum_{i=1}^n \lambda_i v_iv_i^*$ is the spectral decomposition of $Y$, then
\[
	cY+\eps I = \sum_{i=1}^n (c\lambda_i + \epsilon) v_i v_i^*\quad\textup{and}\quad
Y+\eps I = \sum_{i=1}^n (\lambda_i + \epsilon) v_i v_i^*
\]
so that $P_g(cY+\epsilon I, Y+\epsilon I) = \sum_{i=1}^n P_g(c\lambda_i + \epsilon, \lambda_i + \epsilon) v_i v_i^* = \sum_{i=1}^n (c\lambda_i + \epsilon)g(\frac{\lambda_i + \epsilon}{c\lambda_i + \epsilon}) v_i v_i^*$. The right hand side of \eqref{eq:lbPgXeI} thus satisfies
		\begin{align*}
			\sum_{i=1}^{n}[(c\lambda_i+\epsilon) g\left(\frac{\lambda_i+\epsilon}{c\lambda_i+\epsilon}\right) - g(0^+)(c\lambda_i+\epsilon)]v_iv_i^* \rightarrow c(g(1/c) - g(0^+))Y
		\end{align*}
as $\epsilon\downarrow0$. This completes the proof.
\end{proof}

\begin{corollary}
\label{cor:caseg0ghat0finite}
Let $g:(0,\infty)\to \RR$ be operator concave with $g(0^+) > -\infty$ and $\hat{g}(0^+) > -\infty$. If $(X,Y) \in \S^n_+ \times \S^n_+$, then $P_g(X+\epsilon I, Y+\epsilon I)$ is bounded below as $\epsilon \downarrow 0$.
\end{corollary}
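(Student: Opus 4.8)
The plan is to reduce to the already-proved Corollary~\ref{cor:caseg0finite} together with Lemma~\ref{lem:monotone1}, applied to \emph{both} $g$ and its transpose $\hat g$. Recall that $\hat g$ is operator concave whenever $g$ is, that $\hat g(0^+) > -\infty$ by hypothesis, and that $P_g(X,Y) = P_{\hat g}(Y,X)$. The key idea is that, unlike in Corollary~\ref{cor:caseg0finite}, no relation between $\ker X$ and $\ker Y$ is needed once one bounds \emph{each} argument of $P_g$ from below by a multiple of the identity.

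First I would use monotonicity in the first argument. Since $X \psd 0$ we have $X + \epsilon I \psd \epsilon I \pd 0$, so the monotonicity of $A \mapsto P_g(A, Y+\epsilon I) - g(0^+)A$ in $A$ (Lemma~\ref{lem:monotone1}, using $g(0^+) > -\infty$) gives
\[
P_g(X + \epsilon I, Y + \epsilon I) \psd P_g(\epsilon I, Y + \epsilon I) + g(0^+) X .
\]
Next I would bound the remaining term using the transpose identity $P_g(\epsilon I, Y + \epsilon I) = P_{\hat g}(Y + \epsilon I, \epsilon I)$, the inequality $Y + \epsilon I \psd \epsilon I$, and Lemma~\ref{lem:monotone1} applied to $\hat g$:
\[
P_g(\epsilon I, Y + \epsilon I) = P_{\hat g}(Y + \epsilon I, \epsilon I) \psd P_{\hat g}(\epsilon I, \epsilon I) + \hat g(0^+) Y = \epsilon\,\hat g(1)\,I + \hat g(0^+) Y .
\]
Combining the two displays yields $P_g(X + \epsilon I, Y + \epsilon I) \psd \epsilon\,\hat g(1)\,I + \hat g(0^+) Y + g(0^+) X$, and the right-hand side converges (to $\hat g(0^+) Y + g(0^+) X$) as $\epsilon \downarrow 0$; in particular it stays bounded below in the positive semidefinite order, which is exactly the claim.

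I do not expect a genuine obstacle here. The one point to double-check is that Lemma~\ref{lem:monotone1} and its proof (via the integral representation of~\cite[Eq.~(2.3)]{hiai2017different}) apply verbatim to $\hat g$, which is fine since $\hat g$ is operator concave with $\hat g(0^+) > -\infty$. As an alternative route one could instead combine Corollary~\ref{cor:caseg0finite} applied to the pair $(X,0)$, its transpose applied to $(Y,0)$, and sup-additivity~\eqref{eq:supadd2}, via $P_g(X + \epsilon I, Y + \epsilon I) \psd P_g(X + \tfrac{\epsilon}{2}I, \tfrac{\epsilon}{2}I) + P_g(\tfrac{\epsilon}{2}I, Y + \tfrac{\epsilon}{2}I)$; both summands are then bounded below by (a rescaling of) Corollary~\ref{cor:caseg0finite}.
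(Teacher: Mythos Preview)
Your proof is correct and is essentially the same argument as the paper's: both apply Lemma~\ref{lem:monotone1} to $g$ and to $\hat g$ to reduce $P_g(X+\epsilon I,Y+\epsilon I)$ to $P_g(\epsilon I,\epsilon I)=\epsilon g(1)I$ plus the linear correction $g(0^+)X+\hat g(0^+)Y$. The only cosmetic difference is that the paper packages the two monotonicity statements into a single auxiliary function $h(x)=g(x)-g(0^+)-\hat g(0^+)x$ before bounding, whereas you apply them sequentially; the resulting lower bound is identical.
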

\begin{proof}
By Lemma \ref{lem:monotone1} we know that $P_g(X,Y) - g(0^+) X$ is monotone in $X$. Applying Lemma \ref{lem:monotone1} to $\hat{g}$ which is operator concave, we get that $P_{\hat{g}}(Y,X) - \hat{g}(0^+) Y$ is monotone in $Y$. Using $P_g(X,Y) = P_{\hat g}(Y,X)$ we get that $P_g(X,Y) - g(0^+) X - \hat{g}(0^+) Y = P_h(X,Y)$ where $h(x) = g(x) - g(0^+) - \hat{g}(0^+) x$, is monotone in both arguments since, for any $A,B \psd 0$,
\[
\begin{aligned}
	P_g(X+A,Y+B) &- g(0^+)(X+A)-\hat g(0^+)(Y+B) \\
	&\psd P_g(X,Y+B) - g(0^+) X - \hat{g}(0^+)(Y+B)\\
&\psd P_g(X,Y)-g(0^+) X - \hat{g}(0^+) Y.
\end{aligned}
\]
To conclude, observe that
		\begin{align*}
			P_g(X+\epsilon I,Y+\epsilon I) & = P_h(X+\epsilon I,Y+\epsilon I) + g(0^+)(Y+\epsilon I) + \hat{g}(0^+)(X+\epsilon I)\\
			&\psd P_h(\epsilon I,\epsilon I) + g(0^+)(Y+\epsilon I) + \hat{g}(0^+)(X+\epsilon I)\\
		& = \epsilon h(1)I + g(0^+)(Y+\epsilon I) + \hat{g}(0^+)(X+\epsilon I)\\
			&\to g(0^+) Y + \hat{g}(0^+) X.
\end{align*}
\end{proof}

\begin{lemma}
\label{lem:caseg0ghat0infinite}
Let $g:(0,\infty)\to \RR$ be operator concave with $g(0^+) = \hat{g}(0^+) = -\infty$. If  $(X,Y)\in \S_{+}\times \S_{+}$ satisfy $\ker(X) = \ker(Y)$, then $P_g(X+\epsilon I, Y+\epsilon I)$ is bounded below as $\epsilon \downarrow 0$.
\end{lemma}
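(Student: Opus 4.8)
The plan is to reduce the statement to the case of positive definite matrices by exploiting the common kernel. Write $\cN = \ker(X) = \ker(Y)$ and $\cR = \cN^{\perp}$. Since $X$ and $Y$ are Hermitian and positive semidefinite, $\cR$ is the common range of $X$ and $Y$, both $X$ and $Y$ leave $\cR$ and $\cN$ invariant, the restrictions $X|_{\cR}$ and $Y|_{\cR}$ are positive definite, and $X|_{\cN} = Y|_{\cN} = 0$. Picking an orthonormal basis of $\CC^n$ adapted to the decomposition $\CC^n = \cR \oplus \cN$ and using the unitary covariance $P_g(U^*AU, U^*BU) = U^* P_g(A,B) U$, we may assume $X = X_0 \oplus 0$ and $Y = Y_0 \oplus 0$ with $X_0, Y_0 \pd 0$ of size $r = \dim \cR$.

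Next I would use the fact that the noncommutative perspective is compatible with orthogonal direct sums. Writing $X + \epsilon I = (X_0 + \epsilon I) \oplus \epsilon I$ and $Y + \epsilon I = (Y_0 + \epsilon I) \oplus \epsilon I$, which are block diagonal with aligned blocks, and noting that $M \mapsto M^{1/2}$, $M\mapsto M^{-1/2}$ and $M \mapsto g(M)$ all act block-wise on block-diagonal Hermitian matrices, one obtains
\[
P_g(X + \epsilon I, Y + \epsilon I) = P_g(X_0 + \epsilon I, Y_0 + \epsilon I) \;\oplus\; P_g(\epsilon I, \epsilon I).
\]
It then suffices to show that each summand is bounded below as $\epsilon \downarrow 0$. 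For the second summand, $1$-homogeneity of $P_g$ gives $P_g(\epsilon I, \epsilon I) = \epsilon P_g(I,I) = \epsilon\, g(1)\, I \to 0$. For the first summand, $X_0$ and $Y_0$ are positive definite, so $P_g(X_0 + \epsilon I, Y_0 + \epsilon I) \to P_g(X_0, Y_0)$ as $\epsilon \downarrow 0$ by continuity of $P_g$ on pairs of positive definite matrices; this continuity holds because $g$ is analytic (Theorem~\ref{thm:intrep}) and $M \mapsto M^{\pm 1/2}$ is continuous on the positive definite cone. Hence $P_g(X + \epsilon I, Y + \epsilon I) \to P_g(X_0, Y_0) \oplus 0$, a finite limit, so in particular the family is bounded below (indeed convergent) as $\epsilon \downarrow 0$.

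The only mild subtlety I anticipate is the first step: verifying that two positive semidefinite matrices with equal kernel admit a simultaneous orthogonal block decomposition into a positive definite block and a zero block. This is exactly where the hypothesis $\ker(X) = \ker(Y)$ enters; the hypotheses $g(0^+) = \hat{g}(0^+) = -\infty$ are not used in the proof and only serve to situate the lemma within case (iv) of Theorem~\ref{thm:maincl1}. Everything else is a routine combination of the direct-sum behaviour of matrix functions, homogeneity of $P_g$, and continuity of $P_g$ on the positive definite cone; one could alternatively mimic the integral-representation arguments used in Lemma~\ref{lem:monotone1} and its corollaries, but the direct-sum reduction is more transparent here.
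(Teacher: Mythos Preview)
Your proof is correct and is essentially identical to the paper's: both simultaneously block-diagonalize $X$ and $Y$ (via a unitary) into a positive definite block and a zero block, observe that $P_g$ respects this block structure, and then use continuity on the positive definite block together with $P_g(\epsilon I,\epsilon I)=\epsilon g(1)I\to 0$ on the zero block. Your remark that the hypotheses $g(0^+)=\hat g(0^+)=-\infty$ are not actually used in the argument is also accurate.
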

\begin{proof}
	Since $\ker(X)=\ker(Y)$ there is a unitary matrix $Q$ such that 
	\[ Q^*XQ = \begin{bmatrix}\tilde{X} & 0\\0 & 0\end{bmatrix}\quad\textup{and}\quad
		Q^*YQ = \begin{bmatrix} \tilde{Y} & 0\\0 & 0\end{bmatrix}\]
		where $\tilde{X}$ and $\tilde{Y}$ are positive definite. Then 
		\[ P_g(X+\epsilon I,Y+\epsilon I) = Q\begin{bmatrix} P_g(\tilde{X}+\epsilon I,\tilde{Y} + \epsilon I) & 0 \\0 & P_g(\epsilon I,\epsilon I)\end{bmatrix}Q^*\]
		 and so by continuity on the 
			positive definite cone,
			$\lim_{\epsilon \downarrow 0} P_g(\tilde{X}+\epsilon I,\tilde{Y} + \epsilon I) = P_g(\tilde{X},\tilde{Y})$. Furthermore, 
			$\lim_{\epsilon \downarrow 0} P_g(\epsilon I,\epsilon I) = \epsilon g(1)I = 0$.
\end{proof}

\begin{lemma}
\label{lem:unbounded33}
Let $g:(0,\infty)\to \RR$ be operator concave such that $g(0^+) = -\infty$. If $X \not\ll Y$ then $P_g(X+\epsilon I, Y+\epsilon I)$ is unbounded below as $\epsilon \downarrow 0$.\\
More generally, if $\phi:\S^n \to \S^m$ is a positive linear map such that \eqref{eq:assumptionphi} holds, then $\phi(P_g(X+\epsilon I, Y+\epsilon I))$ is unbounded below as $\epsilon\downarrow0$.
\end{lemma}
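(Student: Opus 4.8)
The plan is to base everything on the integral representation of $P_g$ from Theorem~\ref{thm:intrepopconcave}. Fixing $X,Y\psd 0$ with $X\not\ll Y$ and $\epsilon>0$, both $X+\epsilon I$ and $Y+\epsilon I$ are positive definite, so by \eqref{eq:intrepPgXY123}
\[
P_g(X+\epsilon I,Y+\epsilon I) = g(1)(X+\epsilon I) + g'(1)(Y-X) + \int_0^1 \xi_s(X+\epsilon I,Y+\epsilon I)\,d\mu(s),
\]
where $\xi_s(X+\epsilon I,Y+\epsilon I) = -(Y-X)\bigl((1-s)X+sY+\epsilon I\bigr)^{-1}(Y-X)\nsd 0$. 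Since the two linear terms converge as $\epsilon\downarrow 0$, it suffices to show the integral term is unbounded below. I would also record at the outset that letting $x\downarrow 0$ in \eqref{eq:intrepopconcave} (monotone convergence) gives $g(0^+) = g(1)-g'(1)-\int_0^1(1-s)^{-1}\,d\mu(s)$, so the hypothesis $g(0^+)=-\infty$ is precisely the statement $\int_0^1(1-s)^{-1}\,d\mu(s)=+\infty$; this is the only place the hypothesis is used.

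The core of the argument is a single estimate. Pick a unit vector $v\in\ker(Y)$ with $Xv\neq 0$ (such a $v$ exists exactly because $X\not\ll Y$) and set $\lambda := v^*Xv > 0$. Write $M_{s,\epsilon} := (1-s)X+sY+\epsilon I$. Using $(Y-X)v = -Xv$ (as $Yv=0$) and Cauchy--Schwarz in the form $u^*M^{-1}u \geq |w^*u|^2/(w^*Mw)$ with $u = Xv$, $w = v$, together with $v^*M_{s,\epsilon}v = (1-s)\lambda+\epsilon$ (since $v^*Yv=0$), I get
\[
-\,v^*\xi_s(X+\epsilon I,Y+\epsilon I)\,v \;=\; (Xv)^*M_{s,\epsilon}^{-1}(Xv) \;\geq\; \frac{\lambda^2}{(1-s)\lambda+\epsilon}.
\]
Integrating against $\mu$ and applying monotone convergence as $\epsilon\downarrow 0$ then yields
\[
-\,v^*\!\left(\int_0^1\xi_s(X+\epsilon I,Y+\epsilon I)\,d\mu(s)\right)\!v \;\geq\; \lambda^2\int_0^1\frac{d\mu(s)}{(1-s)\lambda+\epsilon} \;\xrightarrow[\epsilon\downarrow 0]{}\; \lambda\int_0^1\frac{d\mu(s)}{1-s} = +\infty,
\]
so $v^*P_g(X+\epsilon I,Y+\epsilon I)v\to-\infty$; in particular $P_g(X+\epsilon I,Y+\epsilon I)$ is unbounded below.

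For the statement involving a positive map $\phi$ with property \eqref{eq:assumptionphi}, I would use the same decomposition with $\phi$ moved inside the integral (the linear parts still stay bounded) and pass to traces for the remaining $\nsd 0$ term. Using \eqref{eq:assumptionphi} on the positive semidefinite matrix $-\xi_s(X+\epsilon I,Y+\epsilon I)$ and then the bound above (with $\tr(A)\geq v^*Av$ for $A\psd 0$, $\|v\|=1$),
\[
\tr\bigl(-\phi(\xi_s(X+\epsilon I,Y+\epsilon I))\bigr) \;\geq\; c\,\tr\bigl(-\xi_s(X+\epsilon I,Y+\epsilon I)\bigr) \;\geq\; -c\,v^*\xi_s(X+\epsilon I,Y+\epsilon I)v \;\geq\; \frac{c\,\lambda^2}{(1-s)\lambda+\epsilon},
\]
and integrating over $s$ shows $\tr\!\bigl(-\int_0^1\phi(\xi_s(X+\epsilon I,Y+\epsilon I))\,d\mu(s)\bigr)\to+\infty$, hence $\phi(P_g(X+\epsilon I,Y+\epsilon I))$ is unbounded below. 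The only delicate point is the core estimate: naive bounds such as $M_{s,\epsilon}\nsd(\|X\|+\|Y\|+1)I$ are far too lossy, and the trick that makes it work is to use the same vector $v$ both as the argument of the quadratic form and inside Cauchy--Schwarz, so that $Yv=0$ forces $v^*M_{s,\epsilon}v$ to be small precisely when $s$ is close to $1$ --- matching exactly the divergence of $\int_0^1(1-s)^{-1}\,d\mu(s)$ that encodes $g(0^+)=-\infty$.
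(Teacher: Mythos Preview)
Your proof is correct but takes a genuinely different route from the paper's. The paper argues directly from the \emph{operator Jensen inequality}: choosing a unit vector $v$ with $v^*Xv=x>0$ and $v^*Yv=0$, and setting $R=(X+\epsilon I)^{1/2}v/\sqrt{x+\epsilon}$, the inequality $R^*g(A)R\leq g(R^*AR)$ applied to $A=(X+\epsilon I)^{-1/2}(Y+\epsilon I)(X+\epsilon I)^{-1/2}$ yields $v^*P_g(X+\epsilon I,Y+\epsilon I)v\leq (x+\epsilon)g(\epsilon/(x+\epsilon))\to-\infty$. For the $\phi$ part, the paper does not re-estimate anything: it observes that $\psi:=\tr\phi-c\,\tr$ is a positive linear functional, so $\epsilon\mapsto\psi(P_g(X+\epsilon I,Y+\epsilon I))$ is concave and hence has a limit in $\RR\cup\{-\infty\}$; then $\tr\phi(P_g)=\psi(P_g)+c\,\tr P_g\to-\infty$.

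Your argument instead unpacks $P_g$ via the integral representation \eqref{eq:intrepPgXY123}, reduces the hypothesis $g(0^+)=-\infty$ to $\int_0^1(1-s)^{-1}\,d\mu(s)=+\infty$, and bounds each $-v^*\xi_s v$ from below by Cauchy--Schwarz. This is a nice alternative: it stays entirely within the integral-representation framework already built in Appendix~\ref{sec:intrepoperatorconcave} and avoids invoking operator Jensen (and the reference needed for it). The paper's proof, on the other hand, is shorter, does not need monotone-convergence bookkeeping, and its treatment of the $\phi$ case via the decomposition $\tr\phi=\psi+c\,\tr$ is a cleaner way to pass from ``one quadratic form diverges'' to ``the trace diverges'' than repeating the pointwise estimate.
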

\begin{proof}
Assume $\ker(Y) \not\subset \ker(X)$. Let $v \in \CC^n$ with $\|v\|_2^2 = 1$, such that $x = v^* X v > 0$ and $v^* Y v = 0$. 
Since $g$ is operator concave, the \emph{operator Jensen inequality} says that for any $R \in \CC^{n\times m}$ such that $R^* R = I_m$ and $X \in \H^n_{++}$,
\[
g(R^*XR) \psd R^* g(X) R,
\]
see \cite{hansen2003jensen} and \cite[Exercise V.2.2(iii)]{bhatia2013matrix}. If we let $R = \frac{(X + \epsilon I)^{1/2} v}{\|(X + \epsilon I)^{1/2} v\|_2} = \frac{(X + \epsilon I)^{1/2} v}{\sqrt{x+\epsilon}} \in \CC^n$ which satisfies $R^* R = 1$ we get
\[
\begin{aligned}
v^* P_g(X+\epsilon I,Y+\epsilon I) v &= (x+\epsilon) R^* g\left((X+\epsilon I)^{-1/2} (Y+\epsilon I) (X+\epsilon I)^{-1/2}\right) R\\
&\leq (x+\epsilon) g(v^*(Y+\epsilon I) v / \|(X+\epsilon I) v\|_2^2)\\
&= (x+\epsilon) g\left(\frac{\epsilon}{x+\epsilon}\right) \to -\infty
\end{aligned}
\]
as $\epsilon \downarrow 0$.
This shows that
$\lim_{\epsilon\downarrow0} \tr P_g(X+\epsilon I, Y+\epsilon I) = -\infty$.

Assume now $\phi$ is a linear map such that \eqref{eq:assumptionphi} holds. We will show that
\[
\lim_{\epsilon\downarrow0} \tr \phi( P_g(X+\epsilon I, Y+\epsilon I)) = -\infty.
\]
Let $\psi(X) = \tr \phi(X) - c \tr X$. Assumption \eqref{eq:assumptionphi} says that $\psi$ is a positive map, and hence $\psi(P_g(X+\epsilon I, Y+\epsilon I))$ is concave in $\epsilon$ and thus has a limit in $\RR \cup \{-\infty\}$ as $\epsilon\downarrow0$. It follows that
\[
\tr \phi( P_g(X+\epsilon I, Y+\epsilon I)) = \psi(P_g(X+\epsilon I, Y+\epsilon I)) + c \tr P_g(X+\epsilon I, Y+\epsilon I) \to -\infty
\]
as $\epsilon \downarrow 0$ as desired.
\end{proof}

We can now complete the proof of \ref{thm:maincl1}. For (i) Use Corollary \ref{cor:caseg0ghat0finite}. For
(ii) Use Corollary \ref{cor:caseg0finite} with $\hat{g}$, and Lemma \ref{lem:unbounded33}. For
(iii) Use Corollary \ref{cor:caseg0finite} and Lemma \ref{lem:unbounded33}. For
(iv) Use Lemmas \ref{lem:caseg0ghat0infinite} and \ref{lem:unbounded33}.

\subsection{Proof of Theorem \ref{thm:maincl2}}
\label{app:Q}

The first part of the theorem is identical to the proof of Theorem \ref{thm:maincl1}.
We only focus on the second part. 

Note that when $g(0) > -\infty$ and $\hat{g}(0) > -\infty$, then $P_g$ is defined on all pairs of positive semidefinite matrices, and thus so is $Q_g$. This establishes (i).

When $g(0) = -\infty$, we cannot directly obtain the domain of $Q_g$ from that of $P_g$. Indeed, we have seen that in this case, the maximal domain of $P_g$ is $\{(X,Y) : X \ll Y\}$. Observe however that $X\otimes I \ll I \otimes Y$ requires $Y$ to be invertible. We thus need to study $Q_g$ directly. The aim of the next lemma is to 
identify the set
\[
\cD = \{(X,Y) \in \S^n_+ \times \S^n_+ : Q_g(X+\epsilon I|Y+\epsilon I) \text{ is bounded below as $\epsilon\downarrow0$} \}.\]
\begin{lemma}
	\label{lem:Qdom}
Let $g:(0,\infty) \to \RR$ be operator concave with $g(0) = -\infty$ and $\hat{g}(0) > -\infty$, and let $\p:\S^{n^2}\to \RR$ be the linear map such that $\p(X\otimes \bar Y) = \tr(XY)$ for all $(X,Y) \in (\H^n)^2$. Let $(X,Y) \in \S^n_{+} \times \S^n_+$. Then the limit $\lim_{\epsilon\downarrow0} Q_g(X+\epsilon I|Y+\epsilon I)$ is finite if, and only if, $X \ll Y$.
\end{lemma}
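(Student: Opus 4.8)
The plan is to turn the statement into a purely scalar fact by using that $X\otimes I$ and $I\otimes\bar Y$ (and their shifts by $\epsilon I$) commute. Fixing spectral decompositions $X=\sum_i\lambda_iu_iu_i^*$ and $Y=\sum_j\mu_jv_jv_j^*$ with $\{u_i\},\{v_j\}$ orthonormal and $\lambda_i,\mu_j\geq 0$, the vectors $u_i\otimes\bar v_j$ form an orthonormal basis of $\CC^{n^2}$ that simultaneously diagonalizes $(X+\epsilon I)\otimes I$ (eigenvalue $\lambda_i+\epsilon$) and $I\otimes\overline{Y+\epsilon I}$ (eigenvalue $\mu_j+\epsilon$). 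Since $P_g$ is just the ordinary functional calculus on commuting positive definite matrices, $P_g\bigl((X+\epsilon I)\otimes I, I\otimes\overline{Y+\epsilon I}\bigr)=\sum_{i,j}(\lambda_i+\epsilon)g\bigl(\tfrac{\mu_j+\epsilon}{\lambda_i+\epsilon}\bigr)(u_i\otimes\bar v_j)(u_i\otimes\bar v_j)^*$. Applying $\p$, using $(u_i\otimes\bar v_j)(u_i\otimes\bar v_j)^*=(u_iu_i^*)\otimes\overline{v_jv_j^*}$ together with $\p(A\otimes\bar B)=\tr(AB)$, which gives $\p\bigl((u_i\otimes\bar v_j)(u_i\otimes\bar v_j)^*\bigr)=\tr(u_iu_i^*v_jv_j^*)=|u_i^*v_j|^2=:c_{ij}$, I would arrive at the key formula
\[
Q_g(X+\epsilon I\,|\,Y+\epsilon I)=\sum_{i,j}c_{ij}\,(\lambda_i+\epsilon)\,g\!\left(\frac{\mu_j+\epsilon}{\lambda_i+\epsilon}\right),\qquad c_{ij}\geq 0,
\]
valid for all $\epsilon>0$ (the $c_{ij}$ in fact form a doubly stochastic array, though this is not needed).

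Next I would analyze the $\epsilon\downarrow 0$ limit of this finite sum, discarding the terms with $c_{ij}=0$. For a pair with $c_{ij}>0$ the scalar term $(\lambda_i+\epsilon)g\bigl((\mu_j+\epsilon)/(\lambda_i+\epsilon)\bigr)$ behaves as follows: if $\lambda_i,\mu_j>0$ it converges to $\lambda_ig(\mu_j/\lambda_i)$; if $\lambda_i=0$ it equals $\epsilon g(1)\to0$ when $\mu_j=0$, and equals $\mu_j\frac{t}{t-1}\frac{g(t)}{t}$ with $t=(\mu_j+\epsilon)/\epsilon\to\infty$ when $\mu_j>0$, which converges to $\mu_j\hat{g}(0^+)$ since $g(t)/t=\hat{g}(1/t)\to\hat{g}(0^+)$, finite by hypothesis; and if $\lambda_i>0,\mu_j=0$ it equals $(\lambda_i+\epsilon)g\bigl(\epsilon/(\lambda_i+\epsilon)\bigr)\to\lambda_ig(0^+)=-\infty$. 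Since $\p\circ P_g$ is concave and $\epsilon\mapsto\bigl((X+\epsilon I)\otimes I, I\otimes\overline{Y+\epsilon I}\bigr)$ is affine, $\epsilon\mapsto Q_g(X+\epsilon I\,|\,Y+\epsilon I)$ is concave in $\epsilon$; hence its limit exists in $\RR\cup\{-\infty\}$ and each summand is bounded above near $0$. This lets me conclude that the limit is finite precisely when there is no ``bad'' pair $(i,j)$ with $c_{ij}>0$, $\lambda_i>0$, $\mu_j=0$: one bad pair drives its term, and so the whole sum, to $-\infty$, and in its absence every surviving term converges to a finite value.

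Finally I would match the ``no bad pair'' condition with $X\ll Y$. From $Xv_j=\sum_i\lambda_i(u_i^*v_j)u_i$ one sees $v_j\in\ker X$ iff $\lambda_i(u_i^*v_j)=0$ for all $i$, i.e. iff $c_{ij}=0$ for every $i$ with $\lambda_i>0$. Since $\ker Y=\mathrm{span}\{v_j:\mu_j=0\}$, the inclusion $\ker Y\subseteq\ker X$ holds iff each $v_j$ with $\mu_j=0$ lies in $\ker X$, which is exactly the absence of a bad pair; thus the limit is finite iff $X\ll Y$, as claimed.

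I expect the only genuinely delicate points to be organizational rather than conceptual: getting the simultaneous diagonalization and the $\p(A\otimes\bar B)=\tr(AB)$ bookkeeping exactly right, handling the boundary cases $\lambda_i=0$ (where the asymptotic $\lim_{t\to\infty}g(t)/t=\hat{g}(0^+)$ is the key input requiring $\hat{g}(0^+)$ finite), and being careful that terms with $c_{ij}=0$ are simply omitted so that no indeterminate $0\cdot(-\infty)$ arises. It is worth noting that this route also explains why $Q_g$ has a genuinely larger domain than $P_g$ can predict: even when $X\ll Y$ one may have $(X\otimes I)\not\ll(I\otimes\bar Y)$ (this fails as soon as $Y$ is singular and $X\neq0$), so the domain of $Q_g$ must be computed directly rather than read off from that of $P_g$.
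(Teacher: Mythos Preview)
Your argument is correct and follows essentially the same route as the paper: exploit the commutativity of $(X+\epsilon I)\otimes I$ and $I\otimes\overline{Y+\epsilon I}$ to reduce $Q_g(X+\epsilon I\mid Y+\epsilon I)$ to the scalar sum $\sum_{i,j}c_{ij}(\lambda_i+\epsilon)\,g\bigl((\mu_j+\epsilon)/(\lambda_i+\epsilon)\bigr)$ (the paper writes the same thing via $\hat g$ and eigenspace projectors, with weights $\tr(P_iQ_j)$ in place of your $c_{ij}=|u_i^*v_j|^2$), and then analyze the limit termwise according to whether $\lambda_i,\mu_j$ vanish. One small fix: concavity of the \emph{total} in $\epsilon$ does not by itself imply that each individual summand is bounded above near $0$; what you actually need (and what is true) is that each summand equals the scalar perspective $P_g(\lambda_i+\epsilon,\mu_j+\epsilon)$, which is itself concave in $\epsilon$ and hence bounded above---this is the justification that a single ``bad'' term diverging to $-\infty$ forces the whole finite sum to $-\infty$.
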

\begin{proof}
Let $X = \sum_{i} \lambda_i P_i$ and $Y = \sum_{j} \mu_j Q_j$ be spectral decompositions of $X$ and $Y$, where $P_i$ and $Q_j$ are orthogonal projectors on the respective eigenspaces of $X$ and $Y$ respectively. Note that
\[
\begin{aligned}
(I\otimes Y+\epsilon I)^{-1/2} ((X+\epsilon I)\otimes I) (I\otimes Y+\epsilon I)^{-1/2} &= (X+\epsilon I) \otimes (Y+\epsilon I)^{-1}\\
	&= \sum_{i,j} (\lambda_i + \epsilon) (\mu_j+\epsilon)^{-1} P_i \otimes Q_j.
\end{aligned}
\]
It thus follows that $Q_g(X+\epsilon I|Y+\epsilon I) = \p(P_g((X + \epsilon I)\otimes I, I \otimes (Y+\epsilon I))) = \p(P_{\hat g}(I\otimes (Y+\epsilon I), (X + \epsilon I) \otimes I))$ is given by
\[
Q_{g}(X+\epsilon I|Y+\epsilon I) = \sum_{ij} (\mu_j+\epsilon) \hat{g}\left( \frac{\lambda_i + \epsilon}{\mu_j+\epsilon} \right) \tr(P_i Q_j).
\]
Let $P^0,Q^0$ be respectively the projectors on $\ker(X)$ and $\ker(Y)$. If we decompose the sum above according to whether the eigenvalues are zero or not, we get:
\begin{align}
\label{eq:Qgdecomp-app}
	Q_{g}(X+\epsilon I|Y+\epsilon I) &= \sum_{ij:\mu_j > 0,\lambda_i \geq 0} (\mu_j+\epsilon) \hat{g}\left( \frac{\lambda_i + \epsilon}{\mu_j+\epsilon} \right) \tr(P_i Q_j)\\\nonumber
	&\qquad + \sum_{i:\lambda_i > 0} \epsilon \hat{g}\left(\frac{\lambda_i+\epsilon}{\epsilon}\right) \tr(P_i Q^0) +\epsilon \hat{g}(1) \tr(P^0 Q^0).
\end{align}
Now assume that $X\ll Y$. We see that if $\lambda_i > 0$, then $\tr(P_i Q^0) = 0$ since $\ker(Y) \subset \ker(X) = \textrm{im}(X)^{\perp}$. Thus we get in this case:
\[
Q_{g}(X+\epsilon I | Y+\epsilon I) = \sum_{ij:\mu_j > 0,\lambda_i \geq 0} (\mu_j+\epsilon) \hat{g}\left( \frac{\lambda_i+\epsilon}{\mu_j+\epsilon} \right) \tr(P_i Q_j) + \epsilon \hat{g}(1) \tr(P^0 Q^0).
\]
Letting $\epsilon\downarrow0$ we see that this has a finite limit since $\hat{g}(0^+) > -\infty$.

Conversely, assume that $X \not\ll Y$. Then in this case note that the middle terms of \eqref{eq:Qgdecomp-app} all diverge to $-\infty$ as $\epsilon\downarrow0$ since $\epsilon \hat{g}((\lambda_i+\epsilon)/\epsilon) = (\lambda_i+\epsilon) g(\epsilon/(\lambda_i+\epsilon)) \to -\infty$ as $\epsilon\downarrow0$, by our assumption on $g$.
\end{proof}

The lemma above establishes case (ii) of Theorem \ref{thm:maincl2}. Case (iii) is obtained by applying the lemma above with $\hat{g}$ instead of $g$. Similar arguments establish case (iv).

\bibliographystyle{alpha}
\bibliography{sc}

\end{document}